\newcommand{\sprod}{\mathop{\textstyle\prod}}
\newcommand{\minimize}{\mathop{\textrm{minimize}}}
\newcommand{\maximize}{\mathop{\textrm{maximize}}}
\newcommand{\argmin}{\mathop{\rm arg\,min}} 
\newcommand{\R}{\mathbf{R}} 
\newcommand{\E}{\mathbf{E}}
\newcommand{\Var}{\mathbf{Var}}
\newcommand{\cT}{\mathcal{T}}
\newcommand{\cB}{\mathcal{B}}
\newcommand{\cG}{\mathcal{G}}
\newcommand{\cS}{\mathcal{S}} 
\newcommand{\tcG}{\widetilde{\mathcal{G}}}
\newcommand{\prox}{\mathbf{prox}}
\newcommand{\dom}{\mathrm{dom\,}}
\newcommand{\ie}{\emph{i.e.}}
\newcommand{\eg}{\emph{e.g.}}
\newtheorem{theorem}{Theorem}[section]
\newtheorem{corollary}[theorem]{Corollary}
\newtheorem{lemma}[theorem]{Lemma}
\newtheorem{assumption}[theorem]{Assumption}
\newtheorem{remark}[theorem]{Remark}
\title{Multi-Level Composite Stochastic Optimization via Nested Variance Reduction}
\author{
Junyu Zhang\thanks{%
Department of Electrical Engineering, Princeton University, Princeton, New Jersey 08544; and National University of Singapore. (Email: \texttt{junyuz@princeton.edu}).}
\and
Lin Xiao\thanks{%
Work done at Microsoft Research, Redmond, WA 98052 (\texttt{lin.xiao@gmail.com}, \url{https://linxiaolx.github.io}).}
}
\begin{document}
\maketitle 

\begin{abstract} 
We consider multi-level composite optimization problems where each mapping in 
the composition is the expectation over a family of randomly chosen smooth 
mappings or the sum of some finite number of smooth mappings. 
We present a normalized proximal approximate gradient (NPAG) method where the 
approximate gradients are obtained via nested variance reduction.
In order to find an approximate stationary point where the expected norm of
its gradient mapping is less than $\epsilon$, the total sample complexity of 
our method is $O(\epsilon^{-3})$ in the expectation case, 
and $O(N+\sqrt{N}\epsilon^{-2})$ in the finite-sum case where $N$ is the 
total number of functions across all composition levels.
In addition, the dependence of our total sample complexity on the number of 
composition levels is polynomial, rather than exponential as in previous work.
\end{abstract} 

\paragraph{keywords:}
composite stochastic optimization, proximal gradient method, 
variance reduction

\section{Introduction}
\label{sec:intro}

In this paper, 
we consider multi-level composite stochastic optimization problems of
the form
\begin{equation}\label{eqn:multi-level-stochastic}
    \minimize_{x\in\R^d}\quad \E_{\xi_m}\bigl[f_{m,\xi_m}\bigl(\cdots\E_{\xi_2}\bigl[f_{2,\xi_2}\bigl(\E_{\xi_1}\!\left[f_{1,\xi_1}(x)\right]\bigr)\bigr]\cdots\bigr)\bigr] + \Psi(x),
\end{equation}
where $\xi_1,\ldots,\xi_m$ are independent random variables. 
We assume that each realization of
$f_{i,\xi_i}:\R^{d_{i-1}}\to\R^{d_i}$ 
is a smooth vector mapping (with $d_0=d$ and $d_m=1$)
and the function~$\Psi$ is convex but possibly nonsmooth.
An interesting special case is when each $\xi_i$ follows the uniform
distribution over a finite support $\{1,\ldots,N_i\}$, \ie,
\begin{equation}\label{eqn:multi-level-finite-sum}
    \minimize_{x\in\R^d}\quad \frac{1}{N_m}\sum_{j=1}^{N_m}f_{m,j}\biggl(\cdots
    \frac{1}{N_2}\sum_{j=1}^{N_2}f_{2,j}\biggl(\frac{1}{N_1}\sum_{j=1}^{N_1}f_{1,j}(x)\biggr)\cdots\biggr) + \Psi(x).
\end{equation} 
To simplify presentation, we write the above problems as 
\begin{equation}\label{eqn:smooth+convex}
    \minimize_{x\in\R^d} ~\bigl\{ \Phi(x) \triangleq F(x) + \Psi(x)\bigr\},
\end{equation}
where the smooth part of the objective possesses a multi-level compositional structure:
\begin{equation}\label{eqn:m-composition}
 F(x) = f_m\circ f_{m-1}\circ\cdots\circ f_1(x).
\end{equation}
For problems~\eqref{eqn:multi-level-stochastic} we use the definition
$f_i(\cdot) = \E_{\xi_i}[f_{i,\xi_i}(\cdot)]$  
and for problem~\eqref{eqn:multi-level-finite-sum}, we have
$f_i(\cdot)=(1/N_i)\sum_{j=1}^{N_i}f_{i,j}(\cdot)$.
 
The formulations~\eqref{eqn:multi-level-stochastic} 
and~\eqref{eqn:multi-level-finite-sum}
cover a broader range of applications beyond the classical stochastic 
programming and empirical risk minimization (ERM) problems,
which are special cases of~\eqref{eqn:multi-level-stochastic}
and~\eqref{eqn:multi-level-finite-sum} when $m=1$, respectively.
Important applications with $m=2$ include policy evaluation in reinforcement
learning and Markov decision processes
(\eg, \cite{sutton1998reinforcement,dann2014policy}),
risk-averse optimization 
(\eg, \cite{Rockafellar2007CoherentRisk,Ruszczynski2013risk-averse}),
and stochastic variational inequality
(\eg, \cite{IusemJofre2017,KoshalNedicShanbhag2013}
through a novel formulation in~\cite{GhadimiRuszWang2018}).
Other applications in machine learning and statistics include
optimization of conditional random fields (CRF), Cox's partial likelihood
and expectation-maximization; 
see formulations in \cite[Section 3.1-3.3]{BlanchetGoldfarb2017}.
Multi-level composite risk minimization problems with $m>2$ have been 
considered in, \eg, ~\cite{dentcheva2017statistical} 
and~\cite{YangWangFang2019}.
A more recent application of the multi-level problem is adversarial attacks and defenses in deep neural networks; see the survey paper \cite{YuanHeZhuLi2019}. 

\subsection{Related work}
\label{sec:previous-work}

Existing work on composite stochastic optimization traces back 
to~\cite{Ermoliev1976}.
Several recent progresses have been made for two-level ($m=2$) problems,
both for the general stochastic formulation 
(\eg, \cite{SCGD-M.Wang,ASC-PG-M.Wang,YangWangFang2019,GhadimiRuszWang2018})
and for the finite-sum formulation
(\eg, \cite{SVR-SCGD,NestedSVRG2018NeurIPS,VRSC-PG,ZhangXiao2019C-SAGA}).

In order to cite specific results, we first explain the measure of efficiency 
for composite stochastic optimization.
A well-adopted measure is their \emph{sample complexity}, 
\ie, the total number of times that any component mapping 
$f_{i,\xi_i}(\cdot)$ or Jacobian $f'_{i,\xi_i}(\cdot)$ is evaluated, 
in order to find some $\bar{x}$ (output of a randomized algorithm) such that 
$\E[\|\cG(\bar{x})\|^2]\leq\varepsilon$ for a given precision~$\varepsilon>0$.
Here $\cG(\bar{x})$ is the \emph{proximal gradient mapping} 
of the objective function~$\Phi\equiv F+\Psi$ at~$\bar{x}$
(exact definition given in Section~\ref{sec:npag}).
If $\Psi\equiv 0$, then $\cG(\bar{x})=F'(\bar{x})$ where $F'$ denotes the 
gradient of~$F$ and the criteria for $\varepsilon$-approximation becomes
$\E[\|F'(\bar{x})\|^2]\leq\varepsilon$.

For problem~\eqref{eqn:multi-level-stochastic} with two levels ($m=2$) 
and $\Psi\equiv 0$, stochastic composite gradient methods were
developed in~\cite{SCGD-M.Wang} with sample complexities 
$O(\varepsilon^{-4})$, $O(\varepsilon^{-3.5})$ and $O(\varepsilon^{-1.25})$ 
for the smooth nonconvex case, smooth convex case and smooth strongly convex 
case respectively.
For nontrivial convex~$\Psi$, improved sample complexities of 
$O(\varepsilon^{-2.25})$, $O(\varepsilon^{-2})$ and $O(\varepsilon^{-1})$ 
were obtained in~\cite{ASC-PG-M.Wang} for the three cases respectively.
These algorithms employ two-level stochastic updates in different time scales,
\ie, the two step-size sequences converge to zero at different rates.
Similar algorithms for solving multi-level problems were proposed
in~\cite{YangWangFang2019}, with sample complexity $O(\varepsilon^{-(m+7)/4})$.
A single time-scale method is proposed 
in~\cite{GhadimiRuszWang2018} for solving two-level problems, which achieves
an $O(\varepsilon^{-2})$ sample complexity, matching the result for 
single-level stochastic optimization \cite{GhadimiLan2013}.

For the composite finite-sum problem~\eqref{eqn:multi-level-finite-sum} with 
two levels ($m=2$), several recent works applied \emph{stochastic 
variance-reduction} techniques to obtain improved sample complexity.
Stochastic variance-reduction techniques were first developed for one-level 
finite-sum convex optimization problems 
(\eg, \cite{NIPS2012SAG,johnson2013accelerating,defazio2014saga,xiaozhang2014proxsvrg,Allen-ZhuYuan2016,SARAH2017ICML}), 
and later extended to the one-level nonconvex setting 
(\eg, \cite{Reddi2016SVRGnonconvex,NCVX-SAGA-Nonsmooth,Allen-ZhuHazan2016,Natasha2017ICML,Natasha2NIPS2018,SPIDER2018NeurIPS,SpiderBoost2018,SmoothSARAH2019,ProxSARAH2019})
and the two-level finite-sum nonconvex setting
(\eg, \cite{SVR-SCGD, NestedSVRG2018NeurIPS, VRSC-PG, ZhangXiao2019C-SAGA}).
For the two-level finite-sum case, the previous best known sample complexity is
$O(N+N^{2/3}\varepsilon^{-1})$ where $N=N_1+N_2$, obtained in
\cite{VRSC-PG,ZhangXiao2019C-SAGA}.

Interestingly, better complexity bounds have been obtained recently
for the non-composite case of $m=1$, specifically, 
$O(\varepsilon^{-1.5})$ for the expectation case and
$O(N_1+\sqrt{N_1}\varepsilon^{-1})$ for the finite-sum case
\cite{SPIDER2018NeurIPS,SpiderBoost2018,SmoothSARAH2019,ProxSARAH2019}.
These results look to be even better than the classical $O(\varepsilon^{-2})$ 
complexity of stochastic gradient method for single-level smooth nonconvex
optimization (\eg, \cite{GhadimiLan2013}), which is consistent with 
the statistical lower bound for sample average approximation
\cite{dentcheva2017statistical}.
Indeed, a stronger smoothness assumption is responsible for the improvement:
instead of the classical assumption that the  gradient of the expected function is Lipschitz continuous, the improved results require a 
\emph{Mean-Square-Lipschitz} (MSL) condition 
(exact definition given in Section~\ref{sec:prox-spider}).

Different from the one-level case,
a major difficulty in the multi-level cases ($m\geq 2$) is to deal with
biased gradient estimators for the composite function
(\eg, \cite{SCGD-M.Wang, ASC-PG-M.Wang, ZhangXiao2019C-SAGA}).
Unbiased gradient estimators for two-level problems are developed 
in~\cite{BlanchetGoldfarb2017}, but they do not seem to improve sample 
complexity.
On the other hand, it is shown in \cite{ZhangXiao2019CIVR}
that the improved sample complexity of
$O\bigl(\min\{\varepsilon^{-1.5}, \sqrt{N_1}\varepsilon^{-1}\}\bigr)$ 
can be obtained with biased gradient estimator for two-level problems
where the outer level is deterministic or without finite-sum. 
This result indicates that similar improvements are possible for 
multi-level problems as well.

\subsection{Contributions and outline}
In this paper, we propose stochastic gradient algorithms with nested 
variance-reduction for solving problems~\eqref{eqn:multi-level-stochastic}
and~\eqref{eqn:multi-level-finite-sum} for any $m\geq 1$, 
and show that their sample complexity for finding~$\bar{x}$ such that 
$\E[\|\cG(\bar{x})\|]\leq\epsilon$ is $O(\epsilon^{-3})$ for the expectation 
case, and $O(N+\sqrt{N}\epsilon^{-2})$ 
for the finite-sum case where $N=\sum_{i=1}^m N_i$.
To compare with previous work listed in Section~\ref{sec:previous-work},
we note that a complexity $O(\varepsilon^{-a})$ for 
$\E[\|\cG(\bar{x})\|^2]\leq\varepsilon$ 
(measure used in Section~\ref{sec:previous-work})
implies an $O(\epsilon^{-2a})$ 
complexity for $\E[\|\cG(\bar{x})\|]\leq\epsilon$ 
(by Jensen's inequality, but not vice versa); 
in other words, $\varepsilon=O(\epsilon^2)$.
In this sense, our results are of the same orders as the best known sample 
complexities for solving one-level stochastic optimization problems 
\cite{SPIDER2018NeurIPS,SpiderBoost2018,SmoothSARAH2019,ProxSARAH2019},
and they are strictly better than previous results for solving 
two and multi-level problems. 

The improvement in our results relies on a nested variance-reduction scheme
using mini-batches and a stronger \emph{uniform Lipschitz} assumption. 
Specifically, we assume that each realization of the random mapping 
$f_{i,\xi_i}$ and its Jacobian are Lipschitz continuous.
This condition is stronger than the MSL condition assumed in
\cite{SPIDER2018NeurIPS,SpiderBoost2018,SmoothSARAH2019,ProxSARAH2019}
for obtaining the
$O\bigl(\min\{\epsilon^{-3}, \sqrt{N_1}\epsilon^{-2}\}\bigr)$ 
sample complexity for one-level problems.
Nevertheless, it holds for many applications in machine learning 
(\eg, \cite{dann2014policy,BlanchetGoldfarb2017,ZhangXiao2019C-SAGA}),
especially when $\dom\Psi$ is bounded.
The results in~\cite{YangWangFang2019} also rely on such a uniform smooth condition.

Our contributions can be summarized as follows.
In Section~\ref{sec:npag}, we consider solving more general problems of the 
form~\eqref{eqn:smooth+convex} without specifying any composition structure,
and propose a Normalized Proximal Approximate Gradient (NPAG) method.
We show that so long as the Mean Square Errors (MSEs) of the approximate 
gradients at each iteration are uniformly bounded by $\epsilon^2$, then 
the iteration complexity of NPAG for finding an~$\bar{x}$ such that 
$\E\|\cG(\bar{x})\|]\leq\epsilon$ is $O(\epsilon^{-2})$.

In Section~\ref{sec:prox-spider}, we discuss stochastic variance-reduction 
techniques for constructing the approximate gradients in NPAG.
In particular, we present a simple and unified perspective on variance reduction
under the MSL condition.
This perspective clearly explains the reason for periodic restart,
and allows us to derive the optimal period length and mini-batch sizes 
without binding to any particular optimization algorithm. 
When using the SARAH/\textsc{Spider} estimator (\cite{SARAH2017ICML} and \cite{SPIDER2018NeurIPS} respectively)
in NPAG to solve one-level stochastic or finite-sum problems, we obtain a Prox-\textsc{Spider} method.
We also analyze the SVRG~\cite{johnson2013accelerating}
and SAGA~\cite{defazio2014saga} estimators in the same framework, and obtain the best-known complexities for them in the literature for nonconvex optimization.

In Section~\ref{sec:multi-level}, we present the multi-level
Nested-\textsc{Spider} method and show that 
under the uniform Lipschitz condition (stronger than MSL), 
its sample complexity for the expectation is $O(m^4\epsilon^{-3})$ 
and for the finite-sum cases is
$O(N+m^4\sqrt{N}\epsilon^{-1})$ where $N=\sum_{i=1}^m N_i$.
In particular, these results are polynomial in the number of levels~$m$,
which improve significantly over the previous result $O(\epsilon^{-(m+7)/2})$ 
in~\cite{YangWangFang2019}, where~$m$ appears in the exponent of~$\epsilon$.

In Section~\ref{sec:numerical}, we present numerical experiments to compare different variance reduction methods on two applications, with one-level and two-level composition structure respectively.
Finally, we give some concluding remarks in Section~\ref{sec:conclusion}.

\section{Normalized proximal approximate gradient method}
\label{sec:npag}

In this section, we present a normalized proximal approximate gradient (NPAG) 
method for solving problems of form~\eqref{eqn:smooth+convex},
which we repeat here for convenience:
\begin{equation} \label{eqn:min-smooth+convex}
    \minimize_{x\in\R^d} ~\left\{ \Phi(x) \triangleq F(x) + \Psi(x) \right\}.
\end{equation}
Here we consider a much more general setting, without imposing any compositional
structure on~$F$.
Formally, we have the following assumption.
\begin{assumption}
\label{assumption:Lip-F}
The functions $F$, $\Psi$ and $\Phi$ in~\eqref{eqn:min-smooth+convex} satisfy:
\begin{itemize} \itemsep 0pt
    \item[(a)] $F:\R^d\to\R$ is differentiable and its gradient $F'$ is $L$-Lipschitz continuous;
    \item[(b)] $\Psi:\R^d\to\R\cup\{+\infty\}$ is convex and lower semi-continuous; 
    \item[(c)] $\Phi\equiv F+\Psi$ is bounded below, \ie, there exists $\Phi_*$ such that $\Phi(x)\geq\Phi_*$ for all $x\in\dom \Psi$.
\end{itemize}
\end{assumption}

\smallskip

Under Assumption~\ref{assumption:Lip-F}, we define
the proximal operater of~$\Psi$, with step size~$\eta$, as
\begin{equation}\label{eqn:prox-def}
 \mathbf{prox}_{\Psi}^\eta (x)\triangleq \argmin_{y\in\R^d} \left\{\Psi(y) + \frac{1}{2\eta} \|y-x\|^2\right\}, \qquad x\in\dom\Psi.
\end{equation}
Since $\Psi$ is convex, this operator is well defined and unique for 
any $x\in\dom\Psi$.
Given the gradient of~$F$ at~$x$, denoted as $F'(x)$, 
the \emph{proximal gradient mapping} of~$\Phi$ at~$x$ is defined as
\begin{equation}\label{eqn:prox-grad-mapping}
 \mathcal{G}_\Phi^\eta(x)\triangleq \frac{1}{\eta}\Bigl(x-\mathbf{prox}_\Psi^\eta\bigl(x-\eta F'(x)\bigr)\Bigr).
\end{equation}
When $\Psi\equiv 0$, we have $\mathcal{G}_\Phi^\eta(x)=F'(x)$ for any $\eta>0$.
Additional properties of the proximal gradient mapping are given in
\cite{nesterov2013composite}.
Throughout this paper, we denote the proximal gradient mapping as $\cG(\cdot)$
by omitting the subscript~$\Phi$ and superscript~$\eta$ 
whenever they are clear from the context.

Our goal is to find an $\epsilon$-stationary point $\bar{x}$ such that 
$\|\mathcal{G}(\bar{x})\|\leq \epsilon$,
or $\E\bigl[\|\mathcal{G}(\bar{x})\|\bigr]\leq \epsilon$
if using a stochastic or randomized algorithm
(see \cite{DrusvyatskiyLewis2018MOR} for justifications of $\|\cG(\cdot)\|$
as a termination criterion).
If the full gradient oracle $F'(\cdot)$ is available, we can use the 
proximal gradient method
\begin{equation}\label{eqn:prox-grad-update}
 x^{t+1}  =  \mathbf{prox}_\Psi^\eta\bigl(x^t - \eta F'(x^t)\bigr) .
\end{equation}
By setting $\eta=1/L$ where~$L$ is the Lipschitz constant of $F'$,  
this algorithm finds an $\epsilon$-stationary point within 
$O(L\epsilon^{-2})$ iterations; see, \eg, \cite[Theorem~10.15]{Beck2017book}.

Here we focus on the case where the full gradient oracle $F'(\cdot)$ is not 
available;
instead, we can compute at each iteration~$t$ an approximate gradient $v^t$.
A straightforward approach 
is to replace $F'(x^t)$ with $v^t$ in~\eqref{eqn:prox-grad-update}, 
and then analyze its convergence properties based on certain measures of 
approximation quality such as $\E[\|v^t-F'(x^t)\|^2]$.
In Algorithm~\ref{alg:NPAG}, we present a more sophisticated variant, 
which first compute a tentative iterate 
$\tilde{x}^{t+1}=\prox_\Psi^\eta(x^t-\eta v^t)$,
and then move in the direction of $\tilde{x}^{t+1}-x^t$ with a step size
$\gamma_t$,
\ie, $x^{t+1} = x^t  + \gamma_t(\tilde{x}^{t+1} - x^t)$.
The choice of step size 
$\gamma_t=\min\left\{\frac{\eta\epsilon_t}{\|\tilde{x}^{t+1}-x^t\|},1\right\}$ 
ensures that the \emph{step length} $\|x^{t+1}-x^t\|$ is always bounded:
\begin{equation}\label{eqn:npag-step-length}
    \|x^{t+1}-x^t\|=\gamma_t\|\tilde{x}^{t+1}-x^t\|\leq\eta\epsilon_t,
\end{equation}
where $\epsilon_t$ is a pre-specified parameter.

\SetAlgoHangIndent{1.5em}
\setlength{\algomargin}{1em}
\begin{algorithm2e}[t]
	\DontPrintSemicolon
	\caption{Normalized Proximal Approximate Gradient (NPAG) Method}
	\label{alg:NPAG}
	\textbf{input:} 
	initial point $x^0\in\R^d$, proximal step size $\eta>0$, a sequence $\epsilon_t>0, \forall t$.\\
	\For{$t = 0,...,T-1$}{
        Find an approximate gradient $v^t$ at the point $x^t$.  \\[0.5ex]
		Compute $\tilde{x}^{t+1} = \mathbf{prox}_\Psi^\eta (x^t-\eta v^t).$\\
		Update $x^{t+1} = x^t  + \gamma_t(\tilde{x}^{t+1} - x^t)$ with
    $\gamma_t=\min\left\{\frac{\eta\,\epsilon_t}{\|\tilde{x}^{t+1}-x^t\|},~1\right\}$.}
    \textbf{output:} $\!\bar{x}\!\in\!\{x^0\!,...,x^{T\!-\!1}\}$, with $x^t$ chosen w.p. $p_t:=\frac{\epsilon_t}{\sum_{k=0}^{T-1}\epsilon_k}$ for $0\leq t\leq T-1$.
\end{algorithm2e} 

Algorithm~\ref{alg:NPAG} can also be written in terms of the 
\emph{approximate gradient mapping}, which we define as
\begin{equation}\label{eqn:approx-grad-mapping}
    \tcG(x^t) = \frac{1}{\eta}\bigl(x^t - \tilde{x}^{t+1}\bigr)
    =\frac{1}{\eta}\bigl(x^t-\prox_\Psi^\eta(x^t-\eta v^t)\bigr).
\end{equation}
Using this notation, the updates in Algorithm~\ref{alg:NPAG} become
$\gamma_t=\min\Bigl\{\frac{\epsilon_t}{\left\|\tcG(x^t)\right\|},1\Bigr\}$ and 
\[
    x^{t+1} 
    = x^t - \gamma_t\eta\, \tcG(x^t)
    = \left\{\begin{array}{ll}
      x^t - \eta\epsilon_t\frac{\tcG(x^t)}{\left\|\tcG(x^t)\right\|} 
      & \mbox{if}~\bigl\|\tcG(x^t)\bigr\|> \epsilon_t, \\[1.5ex]
      x^t-\eta\,\tcG(x^t) & \mbox{if}~\bigl\|\tcG(x^t)\bigr\|\leq \epsilon_t.
      \end{array} \right.
\]
Notice that when $\bigl\|\tcG(x^t)\bigr\|>\epsilon_t$, the algorithm uses the 
\emph{normalized} proximal gradient mapping with 
a step length $\eta\epsilon_t$. 
Therefore, we call it the Normalized Proximal Approximate Gradient (NPAG) 
method.

Next, we prove a general convergence result concerning
Algorithm~\ref{alg:NPAG} without specifying how the approximate gradient 
$v^t$ is generated. 
The only condition we impose is that the \emph{Mean-Square Error} (MSE), 
$\E[\|v^t-F'(x^t)\|^2]$, is sufficiently small for all~$t$.
We first state a useful lemma.

\begin{lemma}\label{lemma:descent-NPAG} 
    Suppose Assumption \ref{assumption:Lip-F} hold. 
    Then the sequence $\{x^t\}$ generated by Algorithm~\ref{alg:NPAG} satisfies
	\begin{equation*} 
	\Phi(x^{t+1}) \leq \Phi(x^t) - \left(\gamma_t/\eta - L\gamma_t^2\right)\bigl\|\tilde{x}^{t+1}-x^t\bigr\|^2 + \frac{1}{2L}\bigl\|F'(x^t)-v^t\bigr\|^2, 
    \qquad \forall\, t\geq 0.
	\end{equation*}
\end{lemma}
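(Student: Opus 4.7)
The plan is to decompose $\Phi = F + \Psi$, bound the two pieces separately along the interpolated step $x^{t+1} = x^t + \gamma_t(\tilde x^{t+1} - x^t)$, and then absorb the cross term generated by the gradient error $F'(x^t) - v^t$ via Young's inequality. Throughout, the key algebraic identity will be $x^{t+1} - x^t = \gamma_t(\tilde x^{t+1} - x^t)$, and the key constraint is $\gamma_t \in [0,1]$ (which holds by construction).

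For the $F$ part, I would invoke the standard descent lemma implied by $L$-Lipschitz continuity of $F'$ (Assumption~\ref{assumption:Lip-F}(a)):
\begin{equation*}
F(x^{t+1}) \leq F(x^t) + \gamma_t \langle F'(x^t),\, \tilde x^{t+1} - x^t\rangle + \frac{L\gamma_t^2}{2}\|\tilde x^{t+1} - x^t\|^2.
\end{equation*}
For the $\Psi$ part, convexity of $\Psi$ together with $\gamma_t \in [0,1]$ gives $\Psi(x^{t+1}) \leq (1-\gamma_t)\Psi(x^t) + \gamma_t \Psi(\tilde x^{t+1})$. Then I would use the $\frac{1}{\eta}$-strong convexity of the proximal subproblem defining $\tilde x^{t+1} = \prox_\Psi^\eta(x^t - \eta v^t)$: comparing the optimal value at $\tilde x^{t+1}$ with the value at the feasible point $y = x^t$ yields
\begin{equation*}
\Psi(\tilde x^{t+1}) \leq \Psi(x^t) - \langle v^t,\, \tilde x^{t+1} - x^t\rangle - \frac{1}{\eta}\|\tilde x^{t+1} - x^t\|^2.
\end{equation*}
Plugging this into the convex-combination bound gives a clean inequality for $\Psi(x^{t+1})$.

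Adding the two bounds produces the inner product $\gamma_t \langle F'(x^t) - v^t,\, \tilde x^{t+1} - x^t\rangle$ as the only term coupling the iterate step with the gradient error. I would split it with Young's inequality calibrated to match the target form,
\begin{equation*}
\gamma_t\langle F'(x^t) - v^t,\, \tilde x^{t+1} - x^t\rangle \leq \frac{1}{2L}\|F'(x^t) - v^t\|^2 + \frac{L\gamma_t^2}{2}\|\tilde x^{t+1} - x^t\|^2,
\end{equation*}
so that the error term appears with coefficient $\tfrac{1}{2L}$ and the extra $\|\tilde x^{t+1} - x^t\|^2$ piece combines with the two $\tfrac{L\gamma_t^2}{2}$ contributions (from the $F$-descent bound and from Young) to give $L\gamma_t^2\|\tilde x^{t+1} - x^t\|^2$, which is exactly what is subtracted from the $-\gamma_t/\eta$ term appearing in the target.

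There is no real obstacle here beyond bookkeeping: the only delicate choice is the Young constant, which must be $L$ on one side so that the absorbed $\|\tilde x^{t+1} - x^t\|^2$ coefficient matches the $L\gamma_t^2$ that already appears and so that the resulting error coefficient is the stated $\tfrac{1}{2L}$. The use of convexity of $\Psi$ is what forces the step-length damping $\gamma_t \leq 1$, which is automatic from the formula in Algorithm~\ref{alg:NPAG}; and the fact that the lemma does not require any unbiasedness or variance bound on $v^t$ means no probabilistic argument enters at this stage — that will come later when $\|F'(x^t) - v^t\|^2$ is replaced by its expectation.
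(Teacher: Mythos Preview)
Your proposal is correct and follows essentially the same route as the paper: both use the $L$-smoothness descent bound for $F$, the convex-combination bound for $\Psi$ via $\gamma_t\in[0,1]$, the $\tfrac{1}{\eta}$-strong convexity of the proximal subproblem to produce the $-\tfrac{\gamma_t}{\eta}\|\tilde x^{t+1}-x^t\|^2$ term, and then Young's inequality with constant $L$ on the cross term $\gamma_t\langle F'(x^t)-v^t,\tilde x^{t+1}-x^t\rangle$. The only cosmetic difference is that the paper labels the last step ``Cauchy--Schwarz'' rather than Young, but the inequality used is exactly the one you wrote.
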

\begin{proof}
	According to the update rule for $x^{t+1}$, we have 
	\begin{eqnarray}
		\Phi(x^{t+1}) 
        & = & F(x^t + \gamma_t(\tilde{x}^{t+1}-x^t)) + \Psi(x^t + \gamma_t(\tilde{x}^{t+1}-x^t)) \nonumber \\
		& \leq & F(x^t) \!+\! \gamma_t\left\langle F'(x^t), \tilde{x}^{t+1}\!\!-\!x^t\right\rangle \!+\! \frac{L\gamma^2_t}{2}\|\tilde{x}^{t+1}\!\!-\!x^t\|^2 \!+\! (1\!-\!\gamma_t)\Psi(x^t) \!+\! \gamma_t \Psi(\tilde{x}^{t+1})\nonumber\\
        &=& F(x^t) + \Psi(x^t) + \frac{L\gamma^2_t}{2}\|\tilde{x}^{t+1}-x^t\|^2 
         + \gamma_t\left\langle F'(x^t)-v^t, \tilde{x}^{t+1}-x^t\right\rangle  \nonumber \\
        && +\gamma_t\Bigl(\left\langle v^t, \tilde{x}^{t+1}-x^t\right\rangle 
        +\Psi(\tilde{x}^{t+1})-\Psi(x^t)\Bigr),
        \label{eqn:Phi-upper-bound}
	\end{eqnarray}
where the inequality is due to the $L$-smoothness of~$F$ 
(see, e.g., \cite[Lemma~1.2.3]{Nesterov2018book}) 
and the convexity of~$\Psi$. 
Since $\tilde{x}^{t+1}$ is the minimizer of a $\frac{1}{\eta}$-strongly 
convex function, \ie,
\[
\tilde{x}^{t+1} = \prox_\Psi^\eta\bigl(x^t-\eta v^t\bigr)
= \argmin_{y\in\R^d}\Bigl\{\langle v^t, y-x^t\rangle + \Psi(y) + \frac{1}{2\eta}\|y-x^t\|^2\Bigr\},
\]
we have
\[
\langle v^t, \tilde{x}^{t+1}-x^t\rangle + \Psi(\tilde{x}^{t+1}) + \frac{1}{2\eta}\|\tilde{x}^{t+1}-x^t\|^2 \leq \Psi(x^t)-\frac{1}{2\eta}\|\tilde{x}^{t+1}-x^t\|^2,
\]
which implies
\[
    \left\langle v^t, \tilde{x}^{t+1}-x^t\right\rangle 
    +\Psi(\tilde{x}^{t+1})-\Psi(x^t)
    ~\leq~ -\frac{1}{\eta}\bigl\|\tilde{x}^{t+1}-x^t\bigr\|^2.
\]
Applying the above inequality to the last term in~\eqref{eqn:Phi-upper-bound}
yields
	\begin{eqnarray*}
		\Phi(x^{t+1}) 
		& \leq & \Phi(x^t) - \Bigl(\frac{\gamma_t}{\eta} -\frac{L\gamma_t^2}{2}\Bigr)\|\tilde{x}^{t+1}-x^t\|^2 + \gamma_t\langle F'(x^t)-v^t, \tilde{x}^{t+1}-x^t\rangle\\
		& \leq & \Phi(x^t) - \Bigl(\frac{\gamma_t}{\eta} -\frac{L\gamma_t^2}{2}\Bigr)\|\tilde{x}^{t+1}-x^t\|^2 + \frac{L\gamma_t^2}{2}\|\tilde{x}^{t+1}-x^t\|^2 + \frac{1}{2L}\|F'(x^t)-v^t\|^2 \\
		& =& \Phi(x^{t}) - \Bigl(\frac{\gamma_t}{\eta} - L\gamma_t^2\Bigr)\|\tilde{x}^{t+1}-x^t\|^2 + \frac{1}{2L}\|F'(x^t)-v^t\|^2,
	\end{eqnarray*}
where the second inequality is due to the Cauchy-Schwarz inequality. 
\end{proof}

We note this result is similar to that of \cite[Lemma~3]{ProxSARAH2019},
but we used a slightly looser bound for the inner product in the last step. 
This bound and our choice of $\gamma_t$ allow a simple convergence analysis 
of the NPAG method, which we present next.

\begin{theorem}\label{theorem:cvg-SNPAG} 
    Suppose Assumption~\ref{assumption:Lip-F} hold and we set $\eta=1/2L$
    in Algorithm~\ref{alg:NPAG}. 
    If it holds that
\begin{equation}\label{eqn:vt-cond}
\E\left[\|v^t-F'(x^t)\|^2\right]\leq \epsilon_t^2, \qquad \forall\, t\geq 0,
\end{equation}
for some sequence $\{\epsilon_t\}$, then the output $\bar{x}$ of Algorithm~\ref{alg:NPAG} satisfies
	\begin{equation}
	\E\bigl[\|\mathcal{G}(\bar{x})\|\bigr] \leq  \frac{4L(\Phi(x^0)-\Phi_*)}{\sum_{t=0}^{T-1}\epsilon_t} + \frac{4\sum_{t=0}^{T-1}\epsilon^2_t}{\sum_{t=0}^{T-1}\epsilon_t}.
	\end{equation}
\end{theorem}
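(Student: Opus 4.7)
The plan is to combine the one-step descent estimate of Lemma~\ref{lemma:descent-NPAG} with the specific normalization built into $\gamma_t$ so as to produce a telescoping bound that is \emph{linear} in $\|\tcG(x^t)\|$ rather than quadratic in the step length $\|\tilde x^{t+1}-x^t\|$. The sampling rule $p_t\propto\epsilon_t$ is tailored to convert this weighted telescope into an unweighted bound on $\E[\|\cG(\bar x)\|]$.

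Plugging $\eta=1/(2L)$ into Lemma~\ref{lemma:descent-NPAG}, the prefactor of the quadratic term becomes $\gamma_t/\eta-L\gamma_t^2 = L\gamma_t(2-\gamma_t)\ge L\gamma_t$ (using $\gamma_t\in(0,1]$), simplifying the descent to
\[
\Phi(x^{t+1})\;\le\;\Phi(x^t) - L\gamma_t\bigl\|\tilde x^{t+1}-x^t\bigr\|^2 + \tfrac{1}{2L}\bigl\|F'(x^t)-v^t\bigr\|^2.
\]
The crucial step is to exploit the clipping rule in $\gamma_t$. Setting $d_t := \|\tilde x^{t+1}-x^t\| = \eta\|\tcG(x^t)\|$, the formula for $\gamma_t$ gives $\gamma_t d_t = \min\{\eta\epsilon_t,\,d_t\}$, so
\[
\gamma_t d_t^2 \;=\; \eta^2\,\|\tcG(x^t)\|\cdot\min\bigl\{\epsilon_t,\,\|\tcG(x^t)\|\bigr\}.
\]
I then invoke the elementary inequality $u\min\{a,u\}\ge au-a^2$ for $a,u\ge 0$ (verified by the two cases $u\ge a$ and $u<a$) with $a=\epsilon_t$ and $u=\|\tcG(x^t)\|$, yielding the linearized decrease $L\gamma_t d_t^2 \ge \tfrac{1}{4L}\bigl(\epsilon_t\|\tcG(x^t)\|-\epsilon_t^2\bigr)$.

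Substituting back, taking total expectations, applying~\eqref{eqn:vt-cond}, telescoping from $t=0$ to $T-1$, and using $\Phi(x^T)\ge\Phi_*$ gives
\[
\sum_{t=0}^{T-1}\epsilon_t\,\E\!\left[\|\tcG(x^t)\|\right] \;\le\; 4L\bigl(\Phi(x^0)-\Phi_*\bigr) + 3\sum_{t=0}^{T-1}\epsilon_t^2.
\]
To pass from the approximate mapping $\tcG$ to the true $\cG$, I use non-expansiveness of $\prox_\Psi^\eta$ in the definitions~\eqref{eqn:prox-grad-mapping} and~\eqref{eqn:approx-grad-mapping}, which gives $\|\cG(x^t)-\tcG(x^t)\|\le\|F'(x^t)-v^t\|$, combined with Jensen's inequality $\E[\|F'(x^t)-v^t\|]\le\epsilon_t$; this contributes an additional $\sum_t\epsilon_t^2$ term. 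Dividing by $\sum_t\epsilon_t$ and using $\E[\|\cG(\bar x)\|] = \bigl(\sum_t\epsilon_t\,\E[\|\cG(x^t)\|]\bigr)/\sum_t\epsilon_t$ from the sampling rule produces the claimed bound.

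The main obstacle is the elementary inequality $u\min\{a,u\}\ge au-a^2$: it is what converts the intrinsically quadratic-in-$d_t$ descent of Lemma~\ref{lemma:descent-NPAG} into a first-order bound in $\|\tcG(x^t)\|$ that is compatible with the convergence measure $\E[\|\cG(\bar x)\|]$, and it produces precisely the constant~$4$ advertised in the theorem. Everything else is standard expectation bookkeeping and the familiar prox-non-expansiveness argument.
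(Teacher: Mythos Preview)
Your proposal is correct and follows essentially the same route as the paper: apply Lemma~\ref{lemma:descent-NPAG} with $\eta=1/(2L)$ and $\gamma_t(2-\gamma_t)\ge\gamma_t$, linearize $\gamma_t\|\tcG(x^t)\|^2$ via an elementary inequality on $\min$, pass from $\tcG$ to $\cG$ by prox non-expansiveness plus Jensen, telescope, and divide by $\sum_t\epsilon_t$. The only cosmetic difference is that the paper uses the slightly sharper inequality $\min\{|z|,z^2\}\ge |z|-\tfrac14$ (equivalently $u\min\{a,u\}\ge au-\tfrac14 a^2$) in place of your $u\min\{a,u\}\ge au-a^2$, but since the paper then loosens its intermediate constant to $\epsilon_t^2/L$ anyway, both routes land on the identical final bound with the constant~$4$.
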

\begin{proof}
Using Lemma~\ref{lemma:descent-NPAG}, the assumption~\eqref{eqn:vt-cond}, 
and the choice $\eta = 1/2L$, 
we obtain
	\begin{eqnarray}
		\E\left[\Phi(x^{t+1})\right] & \leq & \E\left[\Phi(x^t)\right] - \E\left[\left(\gamma_t/\eta - L\gamma_t^2\right)\bigl\|\tilde{x}^{t+1}-x^t\bigr\|^2\right] + \frac{\epsilon_t^2}{2L}\nonumber\\
        & = & \E\left[\Phi(x^t)\right] - \frac{1}{4L}\E\left[\Bigl(2\gamma_t- \gamma_t^2\Bigr)\|\tcG(x^t)\|^2\right] + \frac{\epsilon_t^2}{2L}\nonumber\\
		& \leq & \E\left[\Phi(x^t)\right] - \frac{1}{4L}\E\left[\gamma_t\|\tcG(x^t)\|^2\right]+ \frac{\epsilon_t^2}{2L},
        \label{eqn:from-lemma-expect}
	\end{eqnarray}
where the equality is due to~\eqref{eqn:approx-grad-mapping}
and the last inequality is due to $0\leq\gamma_t\leq 1$, 
which implies $2\gamma_t-\gamma_t^2\geq\gamma_t$.
Note that 
$\gamma_t=
\min\Bigl\{\frac{\epsilon_t}{\left\|\tcG(x^t)\right\|},1\Bigr\}$, 
therefore
$$\gamma_t\bigl\|\tcG(x^t)\bigr\|^2 = \epsilon^2_t\cdot\min\left\{\bigl\|\tcG(x^t)\bigr\|\epsilon_t^{-1},\, \bigl\|\tcG(x^t)\bigr\|^2\epsilon_t^{-2}\right\}\geq \epsilon_t\bigl\|\tcG(x^t)\bigr\|-\frac{1}{4}\epsilon_t^2,$$
where the inequality is due to the fact
$\min\bigl\{|z|, z^2\bigr\} \geq |z|-\frac{1}{4}$ for all $z\in\R$
(here we take $z=\epsilon_t^{-1}\|\tilde{\mathcal{G}}(x^t)\|$).
Taking expectations and substituting the above inequality 
into~\eqref{eqn:from-lemma-expect} gives
\begin{equation}
\label{eqn:new-1}
\E\left[\Phi(x^{t+1})\right] ~\leq~ \E\left[\Phi(x^t)\right] - \frac{\epsilon_t}{4L}\E\left[\bigl\|\tcG(x^t)\bigr\|\right] + \frac{9\epsilon_t^2}{16L}.
\end{equation}
On the other hand, by~\eqref{eqn:vt-cond} and Jessen's inequality we obtain
$$\E\left[\|v^t-F'(x^t)\|\right]\leq \sqrt{\E\left[\|v^t-F'(x^t)\|^2\right]}\leq\epsilon_t.$$
Denote $\hat{x}^{t+1} = \mathbf{prox}_r^\eta(x^t-\eta F'(x^t))$ and also note that $\tilde{x}^{t+1} = \mathbf{prox}_r^\eta(x^t-\eta v^t)$. 
Since the proximal operator is nonexpansive 
(see, \eg, \cite[Section~31]{Rockafellar70book}), 
we have 
\[
\E\left[\|\hat{x}^{t+1} - \tilde{x}^{t+1}\|\right] \leq \eta \E[\|v^t- F'(x^t)\|]\leq \eta\epsilon_t,  \qquad \forall\, t\geq 0.
\]
Therefore,
\begin{eqnarray*}
	\E\bigl[\|\mathcal{G}(x^t)\|\bigr] 
	=  \frac{1}{\eta}\E\bigl[\|\hat{x}^{t+1}-x^t\|\bigr] 
	&\leq& \frac{1}{\eta}\E\bigl[\|\tilde{x}^{t+1}-x^t\|\bigr] + \frac{1}{\eta}\E\bigl[\|\hat{x}^{t+1}-\tilde{x}^{t+1}\|\bigr] \\
	&\leq& \E\left[\bigl\|\tcG(x^t)\bigr\|\right]+\epsilon_t.
\end{eqnarray*} 
Combining the above inequality with \eqref{eqn:new-1}, we get 
\begin{equation}
\frac{\epsilon_t}{4L}\E\left[\bigl\|\cG(x^t)\bigr\|\right]~\leq~ \E\left[\Phi(x^t)\right] - \E\left[\Phi(x^{t+1})\right] + \frac{\epsilon^2_t}{L}.
\end{equation}
Adding the above inequality for $k=0,...,T-1$ and rearranging the terms, we obtain
\begin{equation*} 
\E\left[\bigl\|\cG(\bar x)\bigr\|\right] = \sum_{k=0}^{T-1} \frac{\epsilon_k}{\sum_{t=0}^{T-1}\epsilon_t}\cdot\E\left[\bigl\|\cG(x^t)\bigr\|\right]\leq \frac{4L(\Phi(x^0)-\Phi_*)}{\sum_{t=0}^{T-1}\epsilon_t} + \frac{4\sum_{t=0}^{T-1}\epsilon^2_t}{\sum_{t=0}^{T-1}\epsilon_t}.
\end{equation*}
This completes the proof.  
\end{proof}

\begin{remark}
	\label{remark:constant-epsilon}
	In the simplest case, one can set $\epsilon_t\equiv\epsilon$ for all $t = 0,1,...,T-1$ and $$T \geq 4L\left(\Phi(x^0)-\Phi_*\right)\epsilon^{-2}.$$ 
	Then Theorem \ref{theorem:cvg-SNPAG} directly implies
	\begin{equation*} 
	\E\left[\bigl\|\cG(\bar x)\bigr\|\right] \leq \frac{4L(\Phi(x^0)-\Phi_*)}{T\epsilon} + \frac{4T\epsilon^2}{T\epsilon} \leq 5\epsilon.
	\end{equation*}
\end{remark}

Theorem~\ref{theorem:cvg-SNPAG} applies directly to the deterministic
proximal gradient method with $v^t=F'(x^t)$. 
In this case, with $\epsilon_t$ and $T$ set according to Remark \ref{remark:constant-epsilon}, condition~\eqref{eqn:vt-cond} is automatically satisfied
and we recover the $O(L\epsilon^{-2})$ complexity of proximal gradient method
for nonconvex optimization \cite[Theorem~10.15]{Beck2017book}.

The problem of using a small, constant $\epsilon_t$ is that we will have very small step size $\gamma_t$ in the beginning iterations of the algorithm.
Although the order of the sample complexity is optimal, it may lead to slow convergence in the practice.
We will see in Section~\ref{sec:multi-level} that we can obtain the same orders of samples complexity using a variable sequence $\{\epsilon_t\}$, starting with large values and gradually decreases to the target precision $\epsilon$.
These choices of variable sequence $\{\epsilon_t\}$ lead to improved performance in practice and are adopted in our numerical experiments in Section~\ref{sec:numerical}.

In general, for the stochastic problems, we do not require the estimates $v^t$ to be unbiased.
On the other hand, condition~\eqref{eqn:vt-cond} looks to be quite restrictive
by requiring the MSE of approximate gradient to be less than 
$\epsilon_t^2$, where~$\epsilon_t$ is some predetermined precision. In the rest of this paper, we show how to ensure condition~\eqref{eqn:vt-cond}
using stochastic variance-reduction techniques and derive the total
sample complexity required for solving problems~\eqref{eqn:multi-level-stochastic} and~\eqref{eqn:multi-level-finite-sum}.

\section{A general framework of stochastic variance reduction}
\label{sec:prox-spider}

In this section, we discuss stochastic variance-reduction techniques 
for smooth nonconvex optimization. 
In order to prepare for the multi-level compositional case,
we proceed with a general framework of constructing
stochastic estimators that satisfies \eqref{eqn:vt-cond} for Lipschitz-continuous vector or matrix mappings. 
To simplify presentation, we restrict our discussion in this section to the setting where $\epsilon_t\equiv\epsilon$ in \eqref{eqn:vt-cond}, where $\epsilon$ is the target precsion so that the output $\bar x$ satisfies $\E[\|\cG(\bar x)\|]\leq O(\epsilon)$.
A more general scheme with variable sequence $\{\epsilon_t\}$ is presented in Section~\ref{sec:multi-level}.

Consider a family of mappings $\{\phi_\xi:\R^d\to\R^{p\times q}\}$ 
where the index~$\xi$ is a random variable.
We assume that they satisfy a \emph{uniform Lipschitz} condition, 
\ie, there exists $L>0$ such that 
\begin{equation}\label{eqn:uniform-Lipschitz}
    \| \phi_\xi(x) - \phi_\xi(y) \| \leq L \|x-y\|,
    \qquad \forall\, x, y\in \R^d, \qquad \forall\,\xi\in\Omega,
\end{equation}
where $\|\cdot\|$ denotes the matrix Frobenius norm
and~$\Omega$ is the sample space of~$\xi$.
This uniform Lipschitz condition implies the
Mean-Squared Lipschitz (MSL) property, \ie,
\begin{equation}\label{eqn:MSL-mapping}
    \E_\xi\bigl[\| \phi_\xi(x) - \phi_\xi(y) \|^2\bigr] \leq L^2 \|x-y\|^2,
    \qquad \forall x, y\in \R^d,
\end{equation}
which is responsible for obtaining the 
$O\bigl(\min\{\varepsilon^{-3}, \sqrt{N_1}\varepsilon^{-2}\}\bigr)$ 
sample complexity for one-level problems in
\cite{SPIDER2018NeurIPS,SpiderBoost2018,SmoothSARAH2019,ProxSARAH2019}.
This property in turn implies that the average mapping, defined as
$$\bar{\phi}(x) \triangleq \E_\xi[\phi_\xi(x)],$$ is $L$-Lipschitz. 
In addition, we make the standard assumption that they have bounded variance everywhere, \ie,
\begin{equation}\label{eqn:phi-xi-var}
\E_\xi[\|\phi_\xi(x)-\bar{\phi}(x)\|^2]\leq\sigma^2, \qquad \forall x\in\R^d.
\end{equation}
In the context of solving structured optimization problems of
form~\eqref{eqn:min-smooth+convex}, we can restrict the above assumptions to 
hold within $\dom\Psi$ instead of $\R^d$.

Suppose a stochastic algorithm generates a sequence 
$\{x^0,x^1,\ldots,\}$ using some recursive rule
\[
    x^{t+1} = \psi_t(x^t, v^t),
\]
where $v^t$ is a stochastic estimate of $\bar{\phi}(x^t)$. 
As an example, consider using Algorithm~\ref{alg:NPAG} to solve
problem~\eqref{eqn:min-smooth+convex} where the smooth part
$F(x)=\E_\xi[f_\xi(x)]$.
In this case, we have $\phi_\xi(x)=f'_\xi(x)$ and $\bar{\phi}(x)=F'(x)$,
and can use the simple estimator $v^t=f'_{\xi_t}(x^t)$ where $\xi_t$ 
is a realization of the random variable at time~$t$.
If $\epsilon\geq\sigma$, then assumption~\eqref{eqn:phi-xi-var} directly
implies~\eqref{eqn:vt-cond} with $\epsilon_t\equiv\epsilon$; thus Theorem~\ref{theorem:cvg-SNPAG} and Remark~\ref{remark:constant-epsilon}
show that the sample complexity of Algorithm~\ref{alg:NPAG}
for obtaining $\E[\|\cG(\bar{x})\|]\leq\epsilon$ is $T=O(L\epsilon^{-2})$. 

However, if the desired accuracy~$\epsilon$ is smaller than the uncertainty 
level~$\sigma$, then condition~\eqref{eqn:vt-cond} is not satisfied
and we cannot use Theorem~\ref{theorem:cvg-SNPAG} directly.
A common remedy is to use mini-batches; i.e., 
at each iteration of the algorithm, we randomly pick
a subset $\cB_t$ of~$\xi$ and set
\begin{equation}\label{eqn:mini-batch}
    v^t = \phi_{\cB_t}(x^t) \triangleq 
    \frac{1}{|\cB_t|}\sum_{\xi\in\cB_t}\phi_\xi(x^t).
\end{equation}
In this case, we have $\E[v^t]=\bar\phi(x^t)$ and 
$\E[\|v^t-\bar{\phi}(x^t)\|^2] = \sigma^2/|\cB_t|$.
In order to satisfy~\eqref{eqn:vt-cond} with $\epsilon_t\equiv \epsilon$, we need
$|\cB_t|\geq\sigma^2/\epsilon^2$ for all $t=0,\ldots,T-1$.
Then we can apply Theorem~\ref{theorem:cvg-SNPAG} and Remark~\ref{remark:constant-epsilon} to derive its
sample complexity for obtaining $\E[\|\cG(\bar{x})\|]\leq\epsilon$,
which amounts to $T\cdot\sigma^2/\epsilon^2=O(L\sigma^2\epsilon^{-4})$.

Throughout this paper, we assume that the target precision $\epsilon<\sigma$. In this case (and assuming $\Psi\equiv 0$), the total sample complexity $O(L\epsilon^{-2}+\sigma^2\epsilon^{-4})$ can be 
obtained without using mini-batches (see, e.g., \cite{GhadimiLan2013}).
Therefore, using mini-batches alone seems to be insufficient to improve sample 
complexity (although it can facilitate parallel computing; see, e.g., 
\cite{DekelGSX2012}).
Notice that the simple mini-batch estimator~\eqref{eqn:mini-batch} does not
rely on any Lipschitz condition, so it can also be used with non-Lipschitz 
mappings such as subgradients of nonsmooth convex functions.
Under the MSL condition~\eqref{eqn:MSL-mapping}, 
several variance-reduction techniques that can significantly reduce the sample 
complexity of stochastic optimization have been developed in the literature
(\eg, \cite{NIPS2012SAG,johnson2013accelerating,defazio2014saga,
xiaozhang2014proxsvrg,Allen-ZhuYuan2016,SARAH2017ICML,SPIDER2018NeurIPS}.

In order to better illustrate how Lipschitz continuity can help 
with efficient variance reduction, 
we consider the amount of samples required in constructing a number of, 
say~$\tau$, \emph{consecutive} estimates $\{v^0,\ldots,v^{\tau-1}\}$ 
that satisfy
\begin{equation}\label{eqn:unif-var-bound}
    \E\bigl[\bigl\|v^t-\bar{\phi}(x^t)\bigr\|^2\bigr] \leq \epsilon^2,
    \quad t=0,1,\ldots,\tau-1.
\end{equation}
If the incremental step lengths $\|x^t-x^{t-1}\|$ for $t=1,\ldots,\tau-1$
are sufficiently small and the period~$\tau$ is not too long,
then one can leverage the MSL condition~\eqref{eqn:MSL-mapping} 
to reduce the total sample complexity, to be much less than using the  
simple mini-batch estimator~\eqref{eqn:mini-batch} for all~$\tau$ steps.
In the rest of this section, we first study the SARAH/\textsc{Spider} estimator proposed in~\cite{SARAH2017ICML} and~\cite{SPIDER2018NeurIPS} respectively.
Then we also analyze the SVRG~\cite{johnson2013accelerating} and SAGA~\cite{defazio2014saga} estimators from the same general framework. 

\subsection{SARAH/\textsc{SPIDER} estimator for stochastic optimization}
\label{sec:spider-expect}

In order to construct~$\tau$ consecutive estimates $\{v^0,\ldots,v^{\tau-1}\}$,
this estimator uses the mini-batch estimator~\eqref{eqn:mini-batch} for $v^0$, 
and then constructs $v^1$ through $v^{\tau-1}$ using a recursion:
\begin{equation}\label{eqn:spider}
    \begin{split}
        v^0 &= \phi_{\cB_0}(x^0), \\
        v^t &= v^{t-1}  + \phi_{\cB_t}(x^t) - \phi_{\cB_t}(x^{t-1}) .
    \end{split}
\end{equation}
It was first proposed in \cite{SARAH2017ICML} as a gradient estimator of SARAH 
(StochAstic Recursive grAdient algoritHm)
for finite-sum convex optimization, and later extended to nonconvex
and stochastic optimization \cite{SmoothSARAH2019,ProxSARAH2019}.
The form of~\eqref{eqn:spider} for general Lipschitz continuous mappings
is proposed in~\cite{SPIDER2018NeurIPS}, known as
\textsc{Spider} (Stochastic Path-Integrated Differential EstimatoR).
The following lemma is key to our analysis. 
\begin{lemma}\label{lem:spider}
\emph{\cite[Lemma~1]{SPIDER2018NeurIPS}}
Suppose the random mappings $\phi_\xi$ satisfy~\eqref{eqn:MSL-mapping}.
Then the MSE of the estimator in~\eqref{eqn:spider} can be bounded as
\begin{equation}\label{eqn:spider-var-bound}
    \E\left[\bigl\|v^t-\bar\phi(x^t)\bigr\|^2\right] 
    \leq \E\left[\bigl\|v^0-\bar\phi(x^0)\bigr\|^2\right] + \sum_{r=1}^t\frac{L^2}{|\cB_r|}\E\left[\|x^r-x^{r-1}\|^2\right].
\end{equation}
\end{lemma}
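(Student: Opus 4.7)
My plan is to exploit the recursive structure of the \textsc{Spider} estimator and show that the errors $e^t \triangleq v^t - \bar\phi(x^t)$ accumulate additively in mean square, with each increment controlled by the MSL condition~\eqref{eqn:MSL-mapping}.

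First I would rewrite the recursion~\eqref{eqn:spider} in error form. Subtracting $\bar\phi(x^t)$ from both sides gives
\[
    e^t = e^{t-1} + D^t, \qquad D^t \triangleq \phi_{\cB_t}(x^t) - \phi_{\cB_t}(x^{t-1}) - \bigl(\bar\phi(x^t)-\bar\phi(x^{t-1})\bigr).
\]
The key observation is that $D^t$ has conditional mean zero: letting $\mathcal{F}_{t-1}$ denote the $\sigma$-algebra generated by $\cB_0,\ldots,\cB_{t-1}$ (so that $x^{t-1}$, $x^t$, and $e^{t-1}$ are all $\mathcal{F}_{t-1}$-measurable, while $\cB_t$ is drawn independently), the definition of $\bar\phi$ as the expectation of $\phi_\xi$ yields $\E[D^t\mid\mathcal{F}_{t-1}]=0$.

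Next I would take squared norms and use the tower property: because $e^{t-1}$ is $\mathcal{F}_{t-1}$-measurable and $\E[D^t\mid\mathcal{F}_{t-1}]=0$, the cross term $\E[\langle e^{t-1},D^t\rangle]$ vanishes, giving
\[
    \E\bigl[\|e^t\|^2\bigr] = \E\bigl[\|e^{t-1}\|^2\bigr] + \E\bigl[\|D^t\|^2\bigr].
\]
It then remains to bound $\E[\|D^t\|^2]$. Writing $D^t$ as the average over $\cB_t$ of the i.i.d.\ (conditional on $\mathcal{F}_{t-1}$) mean-zero terms $\phi_\xi(x^t)-\phi_\xi(x^{t-1})-(\bar\phi(x^t)-\bar\phi(x^{t-1}))$, the standard i.i.d.\ variance calculation gives
\[
    \E\bigl[\|D^t\|^2\,\big|\,\mathcal{F}_{t-1}\bigr] = \frac{1}{|\cB_t|}\,\Var_\xi\bigl[\phi_\xi(x^t)-\phi_\xi(x^{t-1})\bigr] \leq \frac{1}{|\cB_t|}\,\E_\xi\bigl[\|\phi_\xi(x^t)-\phi_\xi(x^{t-1})\|^2\bigr] \leq \frac{L^2}{|\cB_t|}\|x^t-x^{t-1}\|^2,
\]
where the final inequality invokes the MSL property~\eqref{eqn:MSL-mapping}. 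Taking total expectations and telescoping from $r=1$ to $r=t$ yields the desired bound.

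The only delicate point is the measurability argument in the second step, i.e.\ establishing that $\cB_t$ is independent of the past so that both the cross term vanishes and the conditional i.i.d.\ variance formula applies. This is clean as long as we fix the convention that $x^t$ is generated from $v^{t-1}$ (hence measurable w.r.t.\ $\mathcal{F}_{t-1}$) before the fresh batch $\cB_t$ is sampled, which is exactly how the \textsc{Spider} recursion is organized in~\eqref{eqn:spider}. No novel estimates are needed beyond the MSL assumption.
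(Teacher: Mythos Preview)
Your argument is correct and is exactly the standard martingale-increment proof of this result. The paper does not give its own proof of Lemma~\ref{lem:spider}; it simply cites \cite[Lemma~1]{SPIDER2018NeurIPS}, and the argument there is precisely the one you outline: write $e^t=e^{t-1}+D^t$, use conditional mean-zero of $D^t$ to kill the cross term, bound the conditional second moment of $D^t$ by the MSL condition divided by the batch size, and telescope.
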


If we can control the step lengths 
$\|x^t-x^{t-1}\|\leq \delta$ for $t=1,\ldots,\tau-1$, then
\begin{equation}\label{eqn:spider-seq-bound}
    \E\left[\bigl\|v^0-\bar\phi(x^0)\bigr\|^2\right] 
    \leq\cdots\leq
    \E\left[\bigl\|v^{\tau-1}-\bar\phi(x^{\tau-1})\bigr\|^2\right] 
    \leq 
    \frac{\sigma^2}{|\cB_0|} + \sum_{t=1}^{\tau-1} \frac{L^2\delta^2}{|\cB_t|}.
\end{equation}
To simplify notation, here we allow the batch sizes $|\cB_t|$ to take 
fraction values, which does not change the order of required sample complexity.
Now, in order to satisfy~\eqref{eqn:unif-var-bound}, it suffices to set
\begin{equation}\label{eqn:spider-batch-sizes}
    |\cB_0|=\frac{2\sigma^2}{\epsilon^2} \quad \mbox{and}\quad
    |\cB_t|=b = \frac{2\tau L^2\delta^2}{\epsilon^2}, \quad
    t=1,\ldots,\tau-1.
\end{equation}
In this case, the number of samples required over~$\tau$ steps is
\[
    |\cB_0| + (\tau-1)b ~\leq~  
    \frac{2\sigma^2}{\epsilon^2} + \frac{2\tau^2 L^2\delta^2}{\epsilon^2}.
\]
Therefore, it is possible to choose $\delta$ and $\tau$ to make  
the above number smaller than $\tau\sigma^2/\epsilon^2$, 
which is the number of samples required for~\eqref{eqn:unif-var-bound}
with the simple mini-batching scheme~\eqref{eqn:mini-batch}.

To make further simplifications, we choose $\delta=\epsilon/2L$, 
which leads to $b=\tau/2$ and 
$$|\cB_0| + (\tau-1)b ~\leq~ 2\sigma^2/\epsilon^2+\tau^2/2.$$
The ratio between the numbers of samples required by \textsc{Spider} and 
simple mini-batching is
\[
    \rho(\tau) ~\triangleq~ \left(\frac{2\sigma^2}{\epsilon^2} + \frac{\tau^2}{2} \right) \bigg/ \left(\tau\frac{\sigma^2}{\epsilon^2}\right)
    ~=~ \frac{2}{\tau} + \frac{\tau \epsilon^2}{2\sigma^2} .
\]
In order to maximize the efficiency of variance reduction, 
we should choose $\tau$ to minimize this ratio, which is done by setting
$$\tau_\star= 2\sigma/\epsilon.$$
Using~\eqref{eqn:spider-batch-sizes} and $\delta=\epsilon/2L$, 
the corresponding mini-batch size and optimal ratio of saving are
\begin{equation}\label{eqn:opt-batch-ratio}
b_\star=\tau_\star/2 = \sigma/\epsilon, \qquad
\rho(\tau_\star) = 2\,\epsilon/\sigma.
\end{equation}
Therefore, significant reduction in sample complexity can be expected
when $\epsilon\ll\sigma$. 

\begin{algorithm2e}[t]
	\DontPrintSemicolon
    \caption{Prox-\textsc{Spider} Method}
	\label{alg:prox-spider}
	\textbf{input:} 
	initial point $x^0\in\R^d$, proximal step size $\eta>0$, and precision $\epsilon>0$; \\
    \hspace{8ex}\textsc{Spider} epoch length~$\tau$, large batch size $B$ and small batch size~$b$. \\
	\For{$t = 0, 1,...,T-1$}{
            \eIf{$t \bmod \tau ==0$}{
                Randomly sample a batch $\cB^t$ of~$\xi$ with $|\cB^t|=B$\\
                $v^t=f'_{\cB^t}(x^t)$
            }{
            Randomly sample a batch $\cB^t$ of~$\xi$ with $|\cB^t|=b$\\
                $v^t=v^{t-1}+f'_{\cB^t}(x^t)-f'_{\cB^t}(x^{t-1})$
            }
            $\tilde{x}^{t+1} = \mathbf{prox}_\Psi^\eta (x^t-\eta v^t)$\\
            $x^{t+1} = x^t  + \gamma_t(\tilde{x}^{t+1} - x^t)$ with
$\gamma_t=\min\left\{\frac{\eta\,\epsilon}{\|\tilde{x}^{t+1}-x^t\|},~1\right\}$
    }
    \textbf{output:} 
    $\bar{x}$ uniformly sampled from $\{x^0,...,x^{T-1}\}$.
\end{algorithm2e} 

The above derivation gives an optimal length~$\tau_\star$ for variance
reduction over consecutive estimates $\{v^0,\ldots,v^{\tau-1}\}$.
If the number of iterations required by the algorithm is lager 
than~$\tau_\star$, then we need to restart the \textsc{Spider} estimator 
every~$\tau_\star$ iterations.
This way, we can keep the optimal ratio~$\rho(\tau_\star)$ 
to obtain the best total sample complexity.

Algorithm~\ref{alg:prox-spider} is an instantiation of the NPAG method
(Algorithm~\ref{alg:NPAG}) that uses \textsc{Spider} to construct $v^t$. 
It incorporates the periodic restart scheme dictated by the analysis above.
The \textsc{Spider}-SFO method (Option II) in~\cite{SPIDER2018NeurIPS} 
can be viewed as a special case of Algorithm~\ref{alg:prox-spider} 
with $\Psi\equiv 0$:
\begin{equation}\label{eqn:spider-sfo}
    x^{t+1} = x^t - \gamma_t \eta\, v^t, \quad \mbox{where}~
    \eta=\frac{1}{2L} ~\mbox{and}~
    \gamma_t=\min\left\{\frac{\epsilon}{\|\tilde{x}^{t+1}-x^t\|},\,1\right\}.
\end{equation}
Several variants of \textsc{Spider}-SFO, including ones that use constant 
step sizes and direct extensions of the form
$x^{t+1} = \prox_\Psi^\eta(x^t-\eta v^t)$,
have been proposed in the literature
\cite{SARAH2017ICML,SPIDER2018NeurIPS,SmoothSARAH2019,ProxSARAH2019}.
Algorithm~\ref{alg:NPAG} is very similar to ProxSARAH
\cite{ProxSARAH2019}, but with different choices of $\gamma_t$.
In ProxSARAH, the whole sequence $\{\gamma_t\}_{t\geq 0}$ is specified 
off-line in terms of parameters such as~$L$ and number of steps to run.  
Our choice of $\gamma_t$ follows closely that of \textsc{Spider}-SFO 
in~\eqref{eqn:spider-sfo}, which is automatically adjusted in an online 
fashion, and ensures bounded step length.
This property allows us to extend the algorithm further to work with more 
general gradient estimators, including those for multi-level 
composite stochastic optimization, all within the common framework of NPAG
and Theorem~\ref{theorem:cvg-SNPAG}.

The sample complexity of Algorithm~\ref{alg:prox-spider} is given in the 
following corollary.

\begin{corollary}\label{cor:prox-spider-stochastic}
Consider problem~\eqref{eqn:min-smooth+convex} with $F(x)=\E_\xi[f_\xi(x)]$.
Suppose Assumption~\ref{assumption:Lip-F} holds and 
the gradient mapping $\phi_\xi\equiv f'_\xi$ 
satisfies~\eqref{eqn:MSL-mapping} and~\eqref{eqn:phi-xi-var}
on $\dom\Psi$ (instead of $\R^d$).
If the parameters in Algorithm~\ref{alg:prox-spider} are chosen as
\begin{equation}\label{eqn:prox-spider-params}
    \eta = \frac{1}{2L}, \qquad
    \tau = \left\lceil \frac{2\sigma}{\epsilon}\right\rceil, \qquad
    B = \left\lceil \frac{2\sigma^2}{\epsilon^2}\right\rceil, \qquad
    b = \left\lceil \frac{\sigma}{\epsilon}\right\rceil ,
\end{equation}
where $\lceil \cdot\rceil $ denotes the integer ceiling,
then the output $\bar{x}$ satisfies $\E[\|\cG(\bar{x})\|]\leq\epsilon$ after 
$O(L\epsilon^{-2})$ iterations, and the total sample complexity is 
$O(L\sigma\epsilon^{-3}+\sigma^2\epsilon^{-2})$.
\end{corollary}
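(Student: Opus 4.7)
The plan is to instantiate Theorem~\ref{theorem:cvg-SNPAG} with the constant sequence $\epsilon_t \equiv \epsilon$ and invoke Remark~\ref{remark:constant-epsilon}. Once I verify that the SPIDER estimator $v^t$ produced by Algorithm~\ref{alg:prox-spider} satisfies the uniform MSE condition $\E[\|v^t-F'(x^t)\|^2]\leq \epsilon^2$ for every $t$, Remark~\ref{remark:constant-epsilon} gives $\E[\|\cG(\bar{x})\|]\leq 5\epsilon$ after $T=\lceil 4L(\Phi(x^0)-\Phi_*)\epsilon^{-2}\rceil = O(L\epsilon^{-2})$ iterations, which is the stated iteration bound (the constant $5$ is absorbed by replacing $\epsilon$ with $\epsilon/5$, a rescaling that does not affect any orders).

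The main task is the MSE verification. I would partition the $T$ iterations into epochs of length $\tau$; within the $k$th epoch, the initial estimate $v^{k\tau}=f'_{\cB^{k\tau}}(x^{k\tau})$ is a $B$-sample average, and subsequent estimates $v^t$ for $t=k\tau+1,\ldots,(k+1)\tau-1$ follow the recursion~\eqref{eqn:spider} with mini-batches of size $b$. The first ingredient is $\E[\|v^{k\tau}-F'(x^{k\tau})\|^2]\leq \sigma^2/B \leq \epsilon^2/2$ by assumption~\eqref{eqn:phi-xi-var} and the choice $B\geq 2\sigma^2/\epsilon^2$. The second ingredient is the crucial step-length control~\eqref{eqn:npag-step-length}: since $\eta=1/(2L)$, every step satisfies $\|x^r-x^{r-1}\|\leq \eta\epsilon = \epsilon/(2L)$. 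Plugging these into Lemma~\ref{lem:spider} (applied with $v^0$ replaced by $v^{k\tau}$) gives, for any $t$ in the epoch,
\begin{equation*}
\E[\|v^t-F'(x^t)\|^2] \leq \frac{\sigma^2}{B} + \sum_{r=k\tau+1}^{t}\frac{L^2}{b}\cdot\frac{\epsilon^2}{4L^2} \leq \frac{\epsilon^2}{2}+(\tau-1)\frac{\epsilon^2}{4b} \leq \epsilon^2,
\end{equation*}
where the last inequality uses $\tau-1\leq 2b$, which follows from the ceilings $\tau \leq 2\sigma/\epsilon + 1$ and $b \geq \sigma/\epsilon$.

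For the sample count I would tally by epoch. Each epoch consumes $B+(\tau-1)b = O(\sigma^2/\epsilon^2 + (\sigma/\epsilon)(\sigma/\epsilon)) = O(\sigma^2/\epsilon^2)$ samples. The number of epochs is $\lceil T/\tau\rceil = O\bigl(L\epsilon^{-2}/(\sigma/\epsilon)\bigr)+1 = O\bigl(L/(\sigma\epsilon)+1\bigr)$, so the total sample complexity is $O(L\sigma\epsilon^{-3}+\sigma^2\epsilon^{-2})$, matching the claim.

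The one delicate step is the MSE verification: it relies on a tight interplay between three quantities that are set independently in the algorithm, namely the NPAG normalization cap $\eta\epsilon$ on the step length, the SPIDER period $\tau\asymp\sigma/\epsilon$, and the small-batch size $b\asymp \sigma/\epsilon=\tau/2$. The analysis would go through with different constants but would no longer be sharp; the specific choices in~\eqref{eqn:prox-spider-params} are exactly those extracted from the optimal-ratio analysis around~\eqref{eqn:opt-batch-ratio}, so I expect no further obstacles beyond bookkeeping with ceilings.
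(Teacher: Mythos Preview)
Your proposal is correct and follows essentially the same approach as the paper: verify the uniform MSE bound via the step-length control~\eqref{eqn:npag-step-length} and Lemma~\ref{lem:spider}, then invoke Theorem~\ref{theorem:cvg-SNPAG}/Remark~\ref{remark:constant-epsilon}, and finally tally samples epoch by epoch. The only cosmetic difference is that the paper phrases the sample count via the optimal-ratio $\rho(\tau_\star)$ derived in~\eqref{eqn:opt-batch-ratio}, whereas you compute $B+(\tau-1)b$ per epoch directly; the arithmetic is identical.
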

\begin{proof}
In Algorithm~\ref{alg:prox-spider}, the step lengths satisfy the same bound 
in~\eqref{eqn:npag-step-length}, \ie, 
$\|x^{t+1}-x^t\|\leq\eta\epsilon=\epsilon/2L$.
From the analysis following~\eqref{eqn:spider-batch-sizes}, the parameters
in~\eqref{eqn:prox-spider-params} guarantee~\eqref{eqn:vt-cond}.
Therefore we can apply Theorem~\ref{theorem:cvg-SNPAG} and Remark \ref{remark:constant-epsilon} to conclude that 
$\E[\|\cG(\bar{x})\|]\leq 5\epsilon$ 
provided that $T\geq 4L(\Phi(x^0)-\Phi_*)\epsilon^{-2}$.

To estimate the sample complexity, we can simply multiply the number of samples
required by simple mini-batching, $T\cdot\sigma^2/\epsilon^2$, by the optimal
ratio in~\eqref{eqn:opt-batch-ratio} to obtain
\[
    T\cdot \frac{\sigma^2}{\epsilon^2}\cdot \rho(\tau_\star)
    =\frac{4L(\Phi(x^0)-\Phi_*)}{\epsilon^2} \cdot \frac{\sigma^2}{\epsilon^2}
    \cdot \frac{2\,\epsilon}{\sigma}
    = O\left(\frac{L\sigma}{\epsilon^3}\right),
\]
where we used the assumption $\epsilon<\sigma$.
Considering that $B=O(\sigma^2/\epsilon^2)$ samples are always needed at $t=0$,
we can include it in the total sample complexity, which becomes
$O(L\sigma\epsilon^{-3}+\sigma^2\epsilon^{-2})$.
\end{proof}

We note that for each $t$ that is not a multiple of~$\tau$, 
we need to evaluate $f'_{\xi}$ twice for each $\xi\in\cB_t$, one at $x^t$ 
and the other at $x^{t-1}$. 
Thus the computational cost per iteration, measured by the number of evaluations of $f_\xi$ and $f'_{\xi}$, is twice the number of samples.
Therefore the overall computational cost is proportional 
to the sample complexity.
This holds for all of our results in this paper.

\subsection{SARAH/SPIDER estimator for finite-sum optimization}

Now consider a finite number of mappings 
$\phi_i:\R^d\to\R^{p\times q}$ for $i=1,\ldots,n$, and define
$\bar{\phi}(x)=(1/n)\sum_{i=1}^n \phi_i(x)$ for every~$x\in\R^d$.
We assume that the MSL condition~\eqref{eqn:MSL-mapping} holds,
but do \emph{not} require a uniform bound on the variance
as in~\eqref{eqn:phi-xi-var}.
Again, we would like to consider the number of samples required to construct
$\tau$ consecutive estimates $\{v^0,\ldots,v^{\tau-1}\}$ that satisfy
condition~\eqref{eqn:unif-var-bound}.
Without the uniform variance bound~\eqref{eqn:phi-xi-var}, this cannot
be treated exactly as a special case of what we did in 
Section~\ref{sec:spider-expect}, and thus deserves separate attention.

We use the same \textsc{Sarah}/\textsc{Spider} estimator~\eqref{eqn:spider} 
and choose $\cB_0=\{1,\ldots,n\}$.
In this case, we have $v^0=\bar{\phi}(x^0)$ and 
$\E\bigl[\|v^0-\bar{\phi}(x^0)\|^2\bigr]=0$.
From~\eqref{eqn:spider-var-bound}, if we can control the step lengths
$\|x^t-x^{t-1}\|\leq\epsilon/2L$ and use a constant batch size $\cB_t=b$ 
for $t=1,\ldots,\tau-1$, then
\[
    \E\left[\bigl\|v^1-\bar\phi(x^1)\bigr\|^2\right] 
    ~\leq~\cdots~\leq~
    \E\left[\bigl\|v^{\tau-1}-\bar\phi(x^{\tau-1})\bigr\|^2\right] 
    ~\leq~ \sum_{t=1}^{\tau-1} \frac{\epsilon^2}{4|\cB_t|} 
    ~\leq~ \frac{\tau\epsilon^2}{4b}.
\]
In order to satisfy~\eqref{eqn:unif-var-bound}, it suffices to use
$b=\tau/4$. The number of samples required for~$\tau$ steps is
\[
    |\cB_0| + (\tau-1)b ~\leq~ n + \tau^2/4.
\]
As before, we compare the above number of samples against
the simple mini-batching scheme of using 
$v^t=\phi_{\cB_t}(x^t)$ for all $t=0,\ldots,\tau-1$. 
However, without an assumption like~\eqref{eqn:phi-xi-var}, we cannot
guarantee~\eqref{eqn:unif-var-bound} unless using $\cB_t=\{1,\ldots,n\}$ for
all~$t$ in simple mini-batching, which becomes full-batching.
The ratio between the numbers of samples required by \textsc{Spider}
and full-batching is
\[
    \rho(\tau) = \frac{n+\tau^2/4}{n\tau} = \frac{1}{\tau} + \frac{\tau}{4n} .
\]
By minimizing this ratio, we obtain the optimal epoch length and batch size:
\[
  \tau_\star=\sqrt{2n},\qquad b_\star = \sqrt{2n}/4.
\]
In this case, the minimum ratio is $\rho(\tau_\star)= 1/\sqrt{n}$.
Hence we expect significant saving in sample complexity for large~$n$.
Similar to the expectation case, we should restart the  
estimator every $\tau_\star=\sqrt{2n}$ iterations to keep the optimal 
ratio of reduction.

As an example, we consider using the Prox-\textsc{Spider} method to solve
problem~\eqref{eqn:min-smooth+convex} when the smooth part of the objective 
is a finite-sum. 
The resulting sample complexity is summarized in the following corollary of 
Theorem~\ref{theorem:cvg-SNPAG}.
Its proof is similar to that of Corollary~\ref{cor:prox-spider-stochastic}
and thus omitted.

\begin{corollary}\label{cor:prox-spider-finite-sum}
Consider problem~\eqref{eqn:min-smooth+convex} with 
$F(x)=(1/n)\sum_{i=1}^n f_i(x)$.
Suppose Assumption~\ref{assumption:Lip-F} holds and the gradient $f'_i$ 
satisfies~\eqref{eqn:MSL-mapping} on $\dom\Psi$.
If we set the parameters in Algorithm~\ref{alg:prox-spider} as
\[
    \eta = 1/2L, \qquad
    \tau = \left\lceil\sqrt{2n}\right\rceil, \qquad
    B = n, \qquad
    b = \left\lceil\sqrt{2n}/4\right\rceil,
\]
and use $\cB_t=\{1,\ldots,n\}$ whenever $t\!\!\mod\!\tau=0$, 
then the output satisfies 
$\E[\|\cG(\bar{x})\|]\leq\epsilon$ after $O(L\epsilon^{-2})$ iterations,
and the total sample complexity is $O\bigl(n+L\sqrt{n}\epsilon^{-2}\bigr)$.
\end{corollary}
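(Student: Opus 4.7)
The plan is to verify that the SARAH/\textsc{Spider} estimator with the chosen parameters satisfies condition~\eqref{eqn:vt-cond} with $\epsilon_t\equiv\epsilon$ on every iteration, then invoke Theorem~\ref{theorem:cvg-SNPAG} together with Remark~\ref{remark:constant-epsilon} to conclude the $O(L\epsilon^{-2})$ iteration bound, and finally tally the per-epoch sample cost to read off the $O(n+L\sqrt{n}\epsilon^{-2})$ total. Essentially this is the finite-sum analogue of Corollary~\ref{cor:prox-spider-stochastic}, with the only substantive change being that the initial batch at each restart is the full index set, so the initial error $\E[\|v^{t_0}-F'(x^{t_0})\|^2]$ vanishes rather than being bounded by $\sigma^2/|\cB_0|$.

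First, I would bound the MSE of $v^t$ between restarts. Let $t_0$ be the most recent restart time, so $t_0\le t< t_0+\tau$ and $\cB_{t_0}=\{1,\ldots,n\}$, giving $v^{t_0}=F'(x^{t_0})$ exactly. The NPAG step-length rule~\eqref{eqn:npag-step-length} guarantees $\|x^{r}-x^{r-1}\|\le \eta\epsilon=\epsilon/(2L)$ for every $r$, so applying Lemma~\ref{lem:spider} with the in-epoch batches $|\cB_r|=b$ yields
\[
\E\!\left[\|v^t-F'(x^t)\|^2\right]
\;\le\;0+\sum_{r=t_0+1}^{t}\frac{L^2}{b}\cdot\frac{\epsilon^2}{4L^2}
\;\le\;\frac{\tau\,\epsilon^2}{4b}.
\]
With $\tau=\lceil\sqrt{2n}\rceil$ and $b=\lceil\sqrt{2n}/4\rceil$, this ratio is at most $1+O(1/\sqrt{n})$, hence condition~\eqref{eqn:vt-cond} holds up to an absolute constant factor. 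The only delicate point is that the ceilings slightly inflate the bound beyond exactly $\epsilon^2$, so I would either absorb the constant into $\epsilon$ (equivalent to rescaling the target precision) or slightly enlarge $b$; either way the conclusion of Theorem~\ref{theorem:cvg-SNPAG} still produces $\E[\|\cG(\bar x)\|]\le c\,\epsilon$ for some universal constant~$c$, which is equivalent to the stated bound.

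Next, having established~\eqref{eqn:vt-cond}, Remark~\ref{remark:constant-epsilon} applied to Theorem~\ref{theorem:cvg-SNPAG} yields $\E[\|\cG(\bar x)\|]\le 5\epsilon$ provided
\[
T\;\ge\; 4L\bigl(\Phi(x^0)-\Phi_*\bigr)\,\epsilon^{-2} \;=\; O(L\epsilon^{-2}).
\]
Finally, I would count samples per epoch: each epoch of length~$\tau$ consumes one full-batch evaluation of $n$ samples plus $(\tau-1)b$ additional mini-batch samples, which equals $n+O(\sqrt n\cdot\sqrt n)=O(n)$. Dividing $T$ into $\lceil T/\tau\rceil$ epochs gives total sample cost
\[
\Bigl\lceil T/\tau\Bigr\rceil\cdot O(n)\;=\;O\!\left(\frac{L\epsilon^{-2}}{\sqrt n}\cdot n\right)+O(n)\;=\;O\bigl(n+L\sqrt n\,\epsilon^{-2}\bigr),
\]
where the additive $O(n)$ accounts for the initial epoch. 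The main obstacle is purely bookkeeping: ensuring the ceiling-induced slack in $\tau/(4b)$ is absorbed cleanly so that Theorem~\ref{theorem:cvg-SNPAG} applies with the advertised constants, and that the per-epoch $O(n)$ accounting genuinely sums to $O(n+L\sqrt n\epsilon^{-2})$ rather than a larger expression; both are routine once the step-length bound and Lemma~\ref{lem:spider} are in hand.
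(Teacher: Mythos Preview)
Your proposal is correct and follows essentially the same route as the paper. The paper in fact omits the proof of this corollary, stating only that it is similar to that of Corollary~\ref{cor:prox-spider-stochastic}; your argument (full-batch restart gives zero initial error, Lemma~\ref{lem:spider} plus the step-length bound~\eqref{eqn:npag-step-length} yields the $\tau\epsilon^2/(4b)$ MSE bound, then Theorem~\ref{theorem:cvg-SNPAG} with Remark~\ref{remark:constant-epsilon}, then per-epoch sample counting) is exactly the finite-sum analogue the paper intends, and you are slightly more careful than the paper about the ceiling slack, which the paper simply waves away by allowing fractional batch sizes.
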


\subsection{SVRG estimator for stochastic optimization}
\label{subsec:SVRG}
As a general framework, NPAG may also incorporate the SVRG estimator proposed in \cite{johnson2013accelerating}. We illustrate how SVRG estimator can be applied to construct $\tau$ consecutive estimates $\{v^0,...,v^{\tau-1}\}$ that satisfy \eqref{eqn:unif-var-bound} for stochastic optimization (the expectation case). 
For the SVRG estimator, we have 
\begin{equation}\label{eqn:svrg}
\begin{split}
v^0 &= \phi_{\cB_0}(x^0), \\
v^t &= v^{0}  + \phi_{\cB_t}(x^t) - \phi_{\cB_t}(x^0).
\end{split}
\end{equation} 
In this case, we have (c.f.\ Lemma~\ref{lem:spider})
\[
\E\bigl[\bigl\|v^t-\bar\phi(x^t)\bigr\|^2\bigr] 
\leq \E\bigl[\bigl\|v^0-\bar\phi(x^0)\bigr\|^2\bigr] + \frac{L^2}{|\cB_t|}\E\left[\|x^t-x^0\|^2\right],
\]
see \cite[Lemma~4]{Reddi2016SVRGnonconvex}. 
If we bound the step length by~$\delta$, then by the triangle inequality,
$$\|x^t-x^0\| \leq \sum_{r=1}^t\|x^r-x^{r-1}\| \leq t\delta,$$ 
which gives
\[
\E\left[\bigl\|v^t-\bar\phi(x^t)\bigr\|^2\right] 
\leq \frac{\sigma^2}{|\cB_0|} + \frac{t^2 L^2\delta^2}{|\cB_t|}.
\]
To guarantee~\eqref{eqn:unif-var-bound}, we can choose $\delta = \epsilon/2L$ and
$|\cB_0|\geq 2\sigma^2/\epsilon^2$ and for $t=1,\ldots,\tau$, and 
$|\cB_t|=b\geq 2 \tau^2 L^2\delta^2/\epsilon^2=\tau^2/2$.
Thus the total sample complexity per epoch is
\[
|\cB_0| + (\tau-1)b = 
\frac{2\sigma^2}{\epsilon^2} + (\tau-1)\frac{\tau^2}{2}
\leq \frac{2\sigma^2}{\epsilon^2} + \frac{\tau^3}{2}.
\]
We can minimize the ratio between the above quantity and $\tau\sigma^2/\epsilon^2$ (of the naive mini-batch scheme) by choosing $\tau=(\sqrt{2}\sigma/\epsilon)^{2/3}$ and total sample complexity per epoch becomes
$O(\sigma^2/\epsilon^2)$, which can be much smaller than 
$\tau\sigma^2/\epsilon^2$. 
By restarting the gradient estimation every $\tau$ iterations, \eqref{eqn:vt-cond} is satisfied with $\epsilon_t\equiv\epsilon$. By Theorem \ref{theorem:cvg-SNPAG} and Remark \ref{remark:constant-epsilon}, getting a $\bar x$ s.t. $\E\left[\|\cG(\bar x)\|\right]\leq5\epsilon$ requires $T = O(L\epsilon^{-2})$ iterations and a total complexity of $O\big((T/\tau)\cdot(\sigma^2/\epsilon^2)\big) = O(L\sigma^{4/3}\epsilon^{-10/3})$, which is slightly worse than that of the SARAH/\textsc{Spider} estimator in the expectation case.

The finite-sum case is similar and is hence omitted. 
It will have the same sample complexity as the SAGA estimator as we discuss next.

\subsection{SAGA estimator for finite-sum optimization}
\label{subsec:SAGA}
In this section, we discussion how the SAGA estimator proposed in \cite{defazio2014saga} can be incorporated into our NPAG framework. Note that the SAGA estimator is only meaningful for the finite-sum case (sampling with replacement). 
The SAGA estimator can be written as
\begin{eqnarray*}
	v^0 &=& u^0 ~=~ \frac{1}{n}\sum_{i=1}^n \phi_i(\alpha_i^0)\quad\mbox{with} \quad\alpha_i^0 = x^0,~ i=1,\ldots,n, \\
	v^t &=& u^{t-1} + \frac{1}{|\cB_t|}\sum_{i\in\cB_t}\bigl(\phi_i(x^t)-\phi_i(\alpha_i^{t-1})\bigr), \\
	u^t &=& u^{t-1} + \frac{1}{n}\sum_{i\in\cB_t}\bigl(\phi_i(x^t)-\phi_i(\alpha_i^{t-1})\bigr).
\end{eqnarray*}
After each iteration, we update $\alpha_i^t$ by setting $\alpha_i^t\leftarrow x^t$ if $i\in\cB_t$ and $\alpha_i^t\leftarrow\alpha_i^{t-1}$ if $i\notin\cB_t$. Denote $\alpha[n,t] = \{\alpha_i^r\}_{i=1,\ldots,n}^{r=0,\ldots,t}$. Suppose $|\cB_t|\equiv b$ for all $t$. Then
the MSE of the SAGA estimator is bounded by
\begin{eqnarray*}
	\E\bigl[\|v^t-\bar{\phi}(x^t)\|^2 | \alpha[n,t]\bigr] \leq \frac{L^2}{nb}\sum_{i=1}^n\|x^t-\alpha_i^{t-1}\|^2.
\end{eqnarray*}
Note that the $\alpha_i^{t-1}$'s are identically distributed, thus we further have 
\begin{equation}
\label{eqn:new-2}
\E\bigl[\|v^t-\bar{\phi}(x^t)\|^2 \bigr] \leq \frac{L^2}{b}\E\left[\|x^t-\alpha_1^{t-1}\|^2\right].
\end{equation}
When we restrict the step lengths by $\|x^r - x^{r-1}\|\leq \delta$, then we have $\|x^t-x^r\|\leq |t-r|\delta$ for all $t,r$. 
Note that all the batches $\cB_r$ are independently and uniformly sampled and $\alpha_1^{t-1} = x^{t-1}$ if $1\in\cB_{t-1}$. That is, $\alpha_1^{t-1} = x^{t-1}$ with probability $b/n$. Similarly, for $r<t-1$, we have $\alpha_1^{t-1} = x^{r}$ if $1\in\cB_{r}$ and $1\notin \cup_{j = r+1}^{t-1}\cB_{j}$. Hence $\alpha_1^{t-1} = x^{r}$ with probability $(b/n)(1-b/n)^{t-r-1}$. Therefore, \eqref{eqn:new-2} implies
\begin{eqnarray}
\label{eqn:vt-saga}
\E\bigl[\|v^t-\bar{\phi}(x^t)\|^2 \bigr] & \leq &  \frac{L^2}{b}\cdot \sum_{r=0}^{t-1}\frac{b}{n}\big(1-\frac{b}{n}\big)^{t-r-1}\E\left[\|x^t-x^r\|^2\right]\\
&\leq& \frac{L^2}{n}\cdot \sum_{r=0}^{t-1}\big(1-\frac{b}{n}\big)^{t-r-1}(t-r)^2\delta^2\nonumber\\
&\leq& \frac{3n^2L^2\delta^2}{b^3}.\nonumber
\end{eqnarray}
When we choose $\delta = \epsilon/2L$ and $b = n^{2/3}$, we have $\E\bigl[\|v^t-\bar{\phi}(x^t)\|^2 \bigr]\leq\epsilon^2$ for any $t\geq0$. Consequently, Theorem \ref{theorem:cvg-SNPAG} and Remark \ref{remark:constant-epsilon} implies a total sample complexity of $O(n^{2/3}\epsilon^{-2})$ to output a point $\bar x$ s.t. $\E\big[\|\cG(\bar{x})\|\big]\leq 5\epsilon$.
This sample complexity is worse than that of the SARAH/\textsc{Spider} estimator.

\smallskip

In this section, we analyzed the normalized-step SARAH/\textsc{Spider}, SVRG and SAGA estimators. Specifically, we described SARAH/\textsc{Spider} and SVRG from the perspective of efficient variance reduction over a period of consecutive iterations in a stochastic algorithm. This perspective allows us to derive the optimal period length and mini-batch sizes of the estimator without binding with any particular algorithm. Using these estimators in the NPAG framework led to the Prox-\textsc{Spider}
method, and we obtained its sample complexity by simply combining the iteration complexity of NPAG (Theorem~\ref{theorem:cvg-SNPAG})
and the optimal period and mini-batch sizes of SARAH/\textsc{Spider}. The complexities in these cases on one-level stochastic optimization are not new (see, \eg, \cite{SPIDER2018NeurIPS,SpiderBoost2018,ProxSARAH2019,Reddi2016SVRGnonconvex,NCVX-SAGA-Smooth}). 
But the NPAG framework and the separate perspective on variance 
reduction are more powerful and also work for multi-level stochastic optimization, which we show next.

\section{Multi-level nested SPIDER}
\label{sec:multi-level}

In this section, we present results for the general composite optimization
problems~\eqref{eqn:multi-level-stochastic} 
and~\eqref{eqn:multi-level-finite-sum} with $m\geq 2$. To provide convergence and complexity result, we only need to construct a gradient estimator 
$v^t$ that is sufficiently close to $F'(x^t)$.
For the ease of presentation, let us denote 
\[
    F_k = f_k\circ f_{k-1}\circ\cdots\circ f_1, \qquad k=1,\ldots,m.
\] 
In particular, we have $F_1 = f_1$, and $F_m = F$.
Then, by chain rule,
\begin{equation}\label{eqn:m-level-chain-rule}
F'(x) = \bigl[f_1'(x)\bigr]^T\bigl[f_2'(F_1(x))\bigr]^T\cdots \bigl[f'_{m-1}(F_{m-2}(x))\bigr]^T f'_m(F_{m-1}(x)).
\end{equation}
To approximate $F'(x^t)$, we construct $m-1$ estimators for the mappings,
\begin{equation}\label{eqn:m-1-mapping-estimators}
y_1^t\approx F_1(x^t), \quad \ldots, \quad y_{m-1}^t\approx F_{m-1}(x^t),
\end{equation}
and $m$ estimators for the Jacobians,
\begin{equation}\label{eqn:m-Jacobian-estimators}
z_1^t\approx f'_1(x^t),\quad z_2^t\approx f'_2(y_1^t), \quad \ldots, \quad z_m^t\approx f'_m(y_{m-1}^t).
\end{equation}
(Notice that we do not need to estimate $F(x^t)=F_m(x^t)$.)
Then we can construct  
\begin{equation}\label{eqn:m-level-estimator}
v^t = (z_1^t)^T(z_2^t)^T\cdots(z_{m-1}^t)^T z_m^t.
\end{equation}
We use the SARAH/\textsc{Spider} estimator~\eqref{eqn:spider}
for all $m-1$ mappings in~\eqref{eqn:m-1-mapping-estimators} 
and $m$ Jacobians in~\eqref{eqn:m-Jacobian-estimators},
then use the above $v^t$ as the approximate gradient in the NPAG method.
This leads to the multi-level Nested-\textsc{Spider} method shown
in Algorithm~\ref{alg:nested-spider-m}.
To simplify presentation, we adopted the notation $y_0^t=x^t$ for all $t\geq 0$.

\begin{algorithm2e}[t]
	\DontPrintSemicolon
    \caption{Multi-level Nested-\textsc{Spider} Method}
	\label{alg:nested-spider-m}
	\textbf{input:} 
	initial point $x_0\in\R^d$, proximal step size $\eta>0$, a sequence of precisions\\
	\hspace{8ex}$\{\epsilon_k\}_{k=1}^K$, 
    stage lengths~$\{\tau_k\}_{k=1}^K$, and batch sizes $\{B_i^k,S_i^k,b_i^k,s_i^k\}_{i=1,\ldots,m}^{k=1,\ldots,K}$. \\
    \textbf{initialize:} $y_0^0 = x^0$, $k=1$, $s = 0$\\ 
	\For{$t = 0, 1,...,T-1$}{ 
		$\chi(t) = k$. \,\,\,\,\,\,\,\,\,\texttt{/* iteration $t$ is in the $k$-th stage */}\\
        \eIf{$s ==0$}{
            \For{$i=1,\ldots,m$}{
                Randomly sample batch $\cS_i^t$ of $\xi_i$ with $|\cS_i^t|\!=\!S_i^k$, batch $\cB_i^t$\! with\! $|\cB_i^t|\!=\!B_i^k$\!\!\!\!\!\!\\
                $y_i^t=f_{i,\cS_i^t}(y_{i-1}^t)$  
                \qquad \texttt{/* skip if $i==m$ */}\\ 
                $z_i^t=f'_{i,\cB_i^t}(y_{i-1}^t)$ \\
            }
        }{
            \For{$i=1,\ldots,m$}{
                Randomly sample batch $\cS_i^t$ of~$\xi_i$ with $|\cS_i^t|\!=\!s_i^k$, batch $\cB_i^t$ with $|\cB_i^t|\!=\!b_i^k$\!\!\!\!\!\!\\
                $y_i^t=y_i^{t-1}+f_{i,\cS_i^t}(y_{i-1}^t)-f'_{i,\cS_i^t}(y_{i-1}^{t-1})$ 
                \qquad \texttt{/* skip if $i==m$ */}\\ 
                $z_i^t=z_i^{t-1}+f'_{i,\cB_i^t}(y_{i-1}^t)-f'_{i,\cB_i^t}(y_{i-1}^{t-1})$ \\
            }
        }
        $v^t = (z_1^t)^T(z_2^t)^T\cdots(z_{m-1}^t)^T z_m^t$\\[0.5ex]
        $\tilde{x}^{t+1} = \mathbf{prox}_\Psi^\eta (x^t-\eta v^t)$\\
        $x^{t+1} = x^t  + \gamma_t(\tilde{x}^{t+1} - x^t)$ ~with~
$\gamma_t=\min\left\{\frac{\eta\,\epsilon_{\chi(t)}}{\|\tilde{x}^{t+1}-x^t\|},~1\right\}$\\
    $y_0^{t+1} = x^{t+1}$, $s \leftarrow s+1$\\
    \textbf{if}\, $s==\tau_k$\,\,\textbf{then}\, $s \leftarrow 0$, $k \leftarrow k+1$ \quad \texttt{/* reset $s=0$ for next stage */}
    }
    \textbf{output:} 
    $\bar{x}$ from $\{x_0,...,x^{T-1}\}$ with $x^t$ sampled with probability $\frac{\epsilon_{\chi(t)}}{\sum_{k=1}^K\tau_k\epsilon_k}$.
\end{algorithm2e} 

Each iteration~$t$ in Algorithm~\ref{alg:nested-spider-m} is assigned a stage or epoch indicator $\chi(t)\in\{1,\ldots,K\}$, indicating the current variance-reduction stage.
Each stage~$k$ has length~$\tau_k$ which is determined by the variable precision~$\epsilon_k$.
Note that all iterations within the same stage~$k$ share the same $\epsilon_{\chi(t)}=\epsilon_k$, which in turn is used to determine the step size
$\gamma_t=\min\left\{\eta\,\epsilon_{\chi(t)}/\|\tilde{x}^{t+1}-x^t\|,~1\right\}$.

For convergence analysis, we make the following assumption. 
\begin{assumption} \label{assumption:m=m} 
    For the functions and mappings appearing in~\eqref{eqn:multi-level-stochastic} and~\eqref{eqn:multi-level-finite-sum}, we assume:
\begin{itemize} \itemsep 0pt
    \item[(a)] For each $i=1,\ldots,m$ and each realization of $\xi_i$, mapping $f_{i,\xi_i}\!:\R^{d_{i-1}}\!\to\!\R^{d_i}$ is $\ell_i$-Lipschitz and its Jacobian $f'_{i,\xi_i}\!:\R^{d_{i-1}}\!\to\!\R^{d_i\times d_{i-1}}$ is $L_i$-Lipschitz. 
    \item[(b)] For each $i=1,\ldots,m-1$, there exists $\delta_i$ such that
$\E_{\xi_i}\bigl[\|f_{i,\xi_i}(y)-f_i(y)\|^2\bigr]\leq \delta_i^2$ for any $y\in\dom f_i$.
    \item[(c)] For each $i=1,\ldots,m$, there exists $\sigma_i$ such that 
$\E_{\xi_i}\bigl[\|f'_{i,\xi_i}(y)-f'_i(y)\|^2\bigr]\leq \sigma_i^2$ for any $y\in\dom f_i$.
\end{itemize}
\end{assumption}
As a result, the function $F = f_m\circ\cdots\circ f_1$ 
and its gradient $F'$ are both Lipschitz continuous, and their Lipschitz 
constants are given in the following lemma.
\begin{lemma}\label{lem:m=m-Lip-constants}
    Suppose Assumption~\ref{assumption:m=m}.(a) holds.
    Then the composite function~$F$ and its gradient~$F'$ are Lipschitz 
    continuous, with respective Lipschitz constants
\begin{eqnarray}
\label{eqn:Lip-F-m=m}
\ell_F=\sprod_{i=1}^m \ell_i, \qquad
L_F = \sum_{i=1}^m L_i \left(\sprod_{r=1}^{i-1}\ell_r^2\right)\left(\sprod_{r=i+1}^{m}\ell_r\right),
\end{eqnarray}
where we use the convention $\sprod_{r=1}^0\ell_r = 1$. 
\end{lemma}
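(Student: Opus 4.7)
The plan is to handle the two Lipschitz claims separately, first treating $F$ by an elementary induction on $m$, then attacking $F'$ via the chain rule expression in \eqref{eqn:m-level-chain-rule} together with a telescoping decomposition.

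For the first claim, I would proceed by induction on $k$ to show that each partial composition $F_k = f_k \circ \cdots \circ f_1$ is $\sprod_{i=1}^{k}\ell_i$-Lipschitz. The base case $k=1$ is Assumption~\ref{assumption:m=m}.(a). The inductive step uses $\|F_k(x) - F_k(y)\| = \|f_k(F_{k-1}(x)) - f_k(F_{k-1}(y))\| \leq \ell_k \|F_{k-1}(x) - F_{k-1}(y)\|$ and the induction hypothesis. Taking $k=m$ gives $\ell_F = \sprod_{i=1}^{m}\ell_i$, and as a byproduct this Lipschitz constant of each $F_{i-1}$ will be needed below.

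For the second claim, I would introduce the shorthand $G_i(x) \triangleq f'_i(F_{i-1}(x))^T$ (with $F_0(x)=x$), so that \eqref{eqn:m-level-chain-rule} reads $F'(x) = G_1(x) G_2(x) \cdots G_m(x)$, where $G_m(x)$ is a column vector and the other $G_i(x)$ are matrices. Two ingredients: (i) a uniform operator-norm bound $\|G_i(x)\|_{\mathrm{op}} \leq \ell_i$ for $i < m$ and $\|G_m(x)\|_2 \leq \ell_m$, which is immediate from the $\ell_i$-Lipschitz continuity of $f_i$; and (ii) a Lipschitz bound on $G_i$ itself, obtained by combining the $L_i$-Lipschitz continuity of $f'_i$ with the already-established Lipschitz constant of $F_{i-1}$:
\[
\|G_i(x) - G_i(y)\| \leq L_i \|F_{i-1}(x) - F_{i-1}(y)\| \leq L_i \Bigl(\sprod_{r=1}^{i-1}\ell_r\Bigr)\|x-y\|.
\]
Then I would write the telescoping identity
\[
F'(x) - F'(y) = \sum_{i=1}^{m} G_1(x)\cdots G_{i-1}(x)\bigl(G_i(x)-G_i(y)\bigr)G_{i+1}(y)\cdots G_m(y),
\]
apply the triangle inequality, and bound each summand by repeatedly using $\|AB\|_2 \leq \|A\|_{\mathrm{op}}\|B\|_2$ (or the analogous Frobenius--operator inequality) together with (i) and (ii). The $i$-th summand contributes $(\sprod_{r=1}^{i-1}\ell_r)\cdot L_i(\sprod_{r=1}^{i-1}\ell_r)\cdot(\sprod_{r=i+1}^{m}\ell_r)\|x-y\|$, which matches the stated $L_F$ after summing over $i$.

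I do not anticipate a real obstacle; the only point requiring minor care is the mixing of Frobenius and operator norms along the chain (since Assumption~\ref{assumption:m=m}.(a) is most naturally phrased in Frobenius norm for Jacobians while the product bound we need is through operator norms). This is reconciled by $\|\cdot\|_{\mathrm{op}} \leq \|\cdot\|_F$ applied to the single ``difference'' factor $G_i(x)-G_i(y)$ in each term of the telescope, while the remaining factors are bounded directly in operator norm.
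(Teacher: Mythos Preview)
Your proposal is correct and follows essentially the same argument as the paper's proof: induction on the depth for $\ell_F$, then the chain-rule expression together with a telescoping decomposition of $F'(x)-F'(y)$ and the bounds $\|f'_i(\cdot)\|\leq\ell_i$ and $\|f'_i(F_{i-1}(x))-f'_i(F_{i-1}(y))\|\leq L_i\bigl(\sprod_{r=1}^{i-1}\ell_r\bigr)\|x-y\|$ for $L_F$. The only cosmetic difference is that the paper changes factors from $x$ to $y$ left-to-right while you do it right-to-left, and you make explicit the Frobenius/operator norm distinction that the paper leaves implicit; neither affects the argument.
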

The proof of this lemma is presented in Appendix~\ref{sec:Lip-F-m-proof}.
For the special case $m=2$, 
\[
\ell_F=\ell_1\ell_2, \qquad L_F = \ell_2 L_1 + \ell_1^2 L_2,
\]
which have appeared in previous work on two-level problems 
\cite{GhadimiRuszWang2018,ZhangXiao2019C-SAGA}.

In addition, we define two constants:
\begin{equation}\label{eqn:sigma-F-delta-F}
    \sigma_F^2 = \sum_{i=1}^m\biggl(\sprod_{r\neq i}\ell_r^2\biggr)\sigma_i^2,
    \qquad
    \delta_F^2 = \sum_{i=1}^m\frac{L_F^2}{\sprod_{r=1}^i\ell_r^2}\delta_i^2 .
\end{equation}
They will be convenient in characterizing the sample complexity of 
Algorithm~\ref{alg:nested-spider-m}.

As inputs to Algorithm~\ref{alg:nested-spider-m}, we need to choose the 
variable precisions $\epsilon_k$, epoch length~$\tau_k$ and batch sizes 
$\{B_i^k,S_i^k,b_i^k,s_i^k\}_{i=1,\ldots,m}^{k=1,\ldots,K}$ to ensure
$\E[\|v^t-F'(x^t)|\|^2]\leq\epsilon_{\chi(t)}^2$.
This is done through the following lemma.

\begin{lemma}\label{lemma:m=m-v-mse}
Suppose Assumption~\ref{assumption:m=m} holds.
In Algorithm~\ref{alg:nested-spider-m}, if we set $\eta=\frac{1}{2L_F}$, $\tau_k = \frac{\ell_F}{2m\epsilon_k}$ and
\begin{eqnarray}
&& B_i^k =\frac{12m(m+1)\sigma_F^2}{\epsilon_k^2}, \quad
    b_i^k 
    = \frac{6(m+1)\ell_F}{\epsilon_k}, \quad~~ i=1,\ldots,m,
    \label{eqn:m=m-B-b} \\
&& S_i^k = \frac{12m(m+1)\delta_F^2}{\epsilon^2_k}, \quad 
    s_i^k 
    = \frac{6m(m+1)\ell_F}{\epsilon_k}, \quad i=1,\ldots,m-1,\qquad
    \label{eqn:m=m-S-s}
\end{eqnarray}
then $\E\bigl[\|v^t-F'(x^t)\|^2\bigr] \leq \epsilon^2_k$ for all $k$ and all $t\in\{r:\chi(r) = k\}$.  
\end{lemma}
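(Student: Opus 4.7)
The plan is to bound $\E[\|v^t-F'(x^t)\|^2]$ by decomposing the error across the $m$ composition levels and controlling each piece with Lemma~\ref{lem:spider} combined with the Lipschitz assumptions. Concretely, I will establish three ingredients in sequence: (i) a uniform bound on per-iteration step lengths $\|x^{t+1}-x^t\|$ within a stage; (ii) recursive MSE bounds for each mapping estimator $y_i^t$ and Jacobian estimator $z_i^t$, proceeding by induction on the level $i$; and (iii) a telescoping decomposition of the product $v^t-F'(x^t)$ that converts the componentwise bounds into a bound on $v^t$.

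First I would observe that the choice $\gamma_t=\min\{\eta\epsilon_{\chi(t)}/\|\tilde{x}^{t+1}-x^t\|,1\}$ guarantees $\|x^{t+1}-x^t\|\leq \eta\epsilon_k=\epsilon_k/(2L_F)$ throughout stage $k$, exactly as in~\eqref{eqn:npag-step-length}. For the mapping estimators, within a stage, the SPIDER recursion for $y_i^t$ satisfies the analogue of~\eqref{eqn:spider-var-bound}: applying Lemma~\ref{lem:spider} with the Lipschitz constant $\ell_i$ and the bounded variance $\delta_i^2$ gives
\begin{equation*}
\E[\|y_i^t-f_i(y_{i-1}^t)\|^2] \leq \frac{\delta_i^2}{S_i^k} + \sum_{r} \frac{\ell_i^2}{s_i^k}\,\E[\|y_{i-1}^r-y_{i-1}^{r-1}\|^2],
\end{equation*}
where the sum runs over iterations inside the current stage. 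Then the triangle inequality
\begin{equation*}
\|y_i^t-F_i(x^t)\| \leq \|y_i^t-f_i(y_{i-1}^t)\| + \ell_i\,\|y_{i-1}^t-F_{i-1}(x^t)\|
\end{equation*}
combined with an induction on $i$ (base case $y_0^t=x^t$) yields a bound on $\E[\|y_i^t-F_i(x^t)\|^2]$. An analogous induction, using $L_i$-Lipschitzness of $f_{i,\xi_i}'$ and $\sigma_i^2$-bounded variance, yields the Jacobian bounds
\begin{equation*}
\E[\|z_i^t-f_i'(F_{i-1}(x^t))\|^2] \leq 2\E[\|z_i^t-f_i'(y_{i-1}^t)\|^2] + 2L_i^2\,\E[\|y_{i-1}^t-F_{i-1}(x^t)\|^2].
\end{equation*}
The step length bound together with the choices $\tau_k=\ell_F/(2m\epsilon_k)$ and the batch sizes in~\eqref{eqn:m=m-B-b}--\eqref{eqn:m=m-S-s} are calibrated so that each such MSE is bounded by a term of order $\epsilon_k^2$ divided by a power of the product $\prod \ell_j$ appropriate to that level.

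For the final step, I would decompose the product error via a standard telescope
\begin{equation*}
v^t-F'(x^t) = \sum_{i=1}^m (z_1^t)^T\cdots(z_{i-1}^t)^T\bigl(z_i^t-f_i'(F_{i-1}(x^t))\bigr)\bigl(f_{i+1}'(F_i(x^t))\bigr)^T\cdots f_m'(F_{m-1}(x^t)),
\end{equation*}
then apply Cauchy--Schwarz (picking up a factor $m$), bound the right-hand factors deterministically by $\prod_{j>i}\ell_j$ using Assumption~\ref{assumption:m=m}.(a), and bound the left-hand factors in expectation by $\prod_{j<i}\ell_j^2$ up to small corrections controlled by the Jacobian MSEs already established. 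Plugging in the parameter choices and using the definitions of $\ell_F$, $L_F$, $\sigma_F^2$, $\delta_F^2$ in~\eqref{eqn:Lip-F-m=m} and~\eqref{eqn:sigma-F-delta-F} should give the target bound $\E[\|v^t-F'(x^t)\|^2]\leq \epsilon_k^2$.

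The main obstacle is bookkeeping: the errors at lower levels propagate into upper-level Jacobian estimators (since $z_i^t$ is centered around $f_i'(y_{i-1}^t)$, not $f_i'(F_{i-1}(x^t))$), and the product structure compounds them multiplicatively across $m$ levels. The factors of $m$ and $m+1$ in the prescribed batch sizes, and the short stage length $\tau_k=\ell_F/(2m\epsilon_k)$, are precisely what is needed to absorb this compounding so that each level contributes at most $\epsilon_k^2/(3m)$ (or similar) to the total MSE. Verifying that the declared constants are consistent with this accounting is the delicate part; the underlying estimates themselves follow from routine applications of Lemma~\ref{lem:spider} and the triangle/Cauchy--Schwarz inequalities.
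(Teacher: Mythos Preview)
Your outline captures the overall architecture of the paper's proof---telescoping decomposition, level-by-level induction, and SPIDER MSE control via Lemma~\ref{lem:spider}---but it is vague precisely at the one step that makes the argument work. In the product decomposition, you propose to ``bound the left-hand factors in expectation by $\prod_{j<i}\ell_j^2$ up to small corrections controlled by the Jacobian MSEs already established.'' This does not go through: the factors $\|z_j^t\|^2$ and the error term $\|z_i^t-f_i'(\cdot)\|^2$ are correlated (they share the same randomness), so you cannot separate the expectation of the product, and Cauchy--Schwarz would force you to fourth-moment bounds that you do not have.

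The paper resolves this by establishing \emph{deterministic} (almost-sure) bounds, not expectations. First it shows $\|y_i^t-y_i^{t-1}\|\leq \bigl(\prod_{r\le i}\ell_r\bigr)\epsilon_k/L_F$ pathwise, using the uniform Lipschitz assumption on each $f_{i,\xi_i}$ (not merely MSL). From this it derives a pathwise bound $\|z_i^t\|\leq (1+1/(2m))\,\ell_i$ for all $t$ in the stage, using the choice $\tau_k=\ell_F/(2m\epsilon_k)$. These deterministic bounds let you pull $\prod_{r}\|z_r^t\|^2$ outside the expectation in~\eqref{eqn:m=m-v-mse-1}, after which the remaining factors are exactly the MSEs handled by Lemma~\ref{lem:spider}. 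This is the step where the uniform Lipschitz assumption (as opposed to MSL) is indispensable, and it is the missing ingredient in your sketch. A secondary point: your induction on $\E[\|y_i^t-F_i(x^t)\|^2]$ with a plain factor~$2$ from Young's inequality would compound to $2^i$; the paper uses weights $(1+k)$ and $(1+k^{-1})$ at step~$k$ to keep the growth linear in~$i$, which is what makes the prescribed polynomial-in-$m$ batch sizes sufficient.
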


\begin{proof}
First, we bound $\|v^t-F'(x^t)\|^2$ in terms of the approximation
errors of the individual estimators in~\eqref{eqn:m-1-mapping-estimators}
and~\eqref{eqn:m-Jacobian-estimators}.
Denoting $F_i(x^t)$ by $F_i^t$ for simplicity, we have 
from~\eqref{eqn:m-level-chain-rule} and~\eqref{eqn:m-level-estimator},
\[
\E\bigl[\|v^t\!-F'(x^t)\|^2\bigr] 
\!= \E\Bigl[\left\|[z_1^t]^T[z_2^t]^T\!\cdots[z_{m-1}^t]^T\! z_m^t 
-[f_1'(x^t)]^T[f'_2(F_1^t)]^T\!\cdots f'_m(F_{m-1}^t)\right\|^2\Bigr].
\]
To bound this value, we add and subtract intermediate terms inside
the norm such that each adjacent pair of products differ at most in one factor.
Then we use the inequality
$\bigl\|\sum_{i=1}^r a_i-b_i\bigr\|^2\leq r\sum_{i=1}^r\|a_i-b_i\|^2$ 
to split the bound. 
More concretely, we split $v^t-F'(x^t)$ into $k=2m-1$ pairs:
\begin{eqnarray}
\label{eqn:m=m-v-mse-1}
&& \qquad \E\bigl[\|v^t-F'(x^t)\|^2\bigr] \\
& \leq & (2m-1)\biggl(\E\left[\big\|[z_1^t]^T[z_2^t]^T\cdots[z_{m-1}^t]^Tz_m^t - [f_1'(x^t)]^T[z_2^t]^T\cdots[z_{m-1}^t]^Tz_m^t \big\|^2\right] \nonumber\\
& & + \E\!\left[\big\|[f_1'(x^t)]^T[z_2^t]^T[z_3^t]^T\!\!\cdots[z_{m-1}^t]^Tz_m^t  - [f_1'(x^t)]^T[f'_2(y_1^t)]^T[z_3^t]^T\!\!\cdots[z_{m-1}^t]^Tz_m^t\big\|^2\right] \nonumber \\
& & + \E\!\left[\big\|[f_1'(x^t)]^T[f'_2(y_1^t)]^T[z_3^t]^T\!\!\cdots\![z_{m\!-\!1}^t]^T\! z_m^t  \!-\! [f_1'(x^t)]^T[f'_2(F_1^t)]^T[z_3^t]^T\!\!\cdots\![z_{m\!-\!1}^t]^Tz_m^t\big\|^2\right] \nonumber \\[1ex]
& & + ~\cdots \nonumber \\[1ex]
& & + \E\!\left[\big\|[f_1'(x^t)]^T\cdots[f'_{m-1}(F_{m-2}^t)]^Tz_m^t  - [f_1'(x^t)]^T\cdots[f'_{m-1}(F_{m-2}^t)]^Tf'_m(y_{m-1}^t)\big\|^2\right] \nonumber \\
& & + \E\!\left[\big\|[f_1'(x^t)]^T\!\!\cdots\![f'_{m\!-\!1}(F_{m\!-\!2}^t)]^T\! f'_m(y_{m\!-\!1}^t)\!-\![f_1'(x^t)]^T\!\!\cdots\![f'_{m\!-\!1}(F_{m\!-\!2}^t)]^T\! f'_m(F_{m\!-\!1}^t)\big\|^2\right]\!\biggr) \nonumber \\
& \leq & 2m\sum_{i=1}^m\E\biggl[\biggl(\sprod_{r=1}^{i-1}\|f'_r(F_{r-1}^t)\|^2\biggr)\biggl(\sprod_{r=i+1}^m \|z_r^t\|^2\biggr) \nonumber \\[-1ex]
&& \qquad\qquad\qquad \biggl(\|z_i^t-f_i'(y_{i-1}^t)\|^2 + \mathbf{1}_{i\geq2}\|f'_i(y_{i-1}^t)-f'_i(F_{i-1}^t)\|^2\biggr)\biggr]  \nonumber \\
& \leq & 2m\sum_{i=1}^m\E\biggl[\biggl(\sprod_{r=1}^{i-1}\ell_r^2\biggr)\biggl(\sprod_{r=i+1}^m \|z_r^t\|^2\biggr)\biggl(\|z_i^t-f_i'(y_{i-1}^t)\|^2 + \mathbf{1}_{i\geq2}L_i^2\|y_{i-1}^t-F_{i-1}^t\|^2\biggr)\biggr],
\nonumber 
\end{eqnarray}
where we used the notation $F_0^t=y_0^t=x^t$ and $\mathbf{1}_{i\geq 2}=1$
if $i\geq 2$ and~$0$ otherwise.
In the last inequality, we used $\|f'_r(\cdot)\|\leq\ell_r$, which is a 
consequence of $f_r$ being $\ell_r$-Lipschitz.

Next, we first derive deterministic bounds on $\|z_i^t\|^2$ so that they
can be moved outside of the expectation in~\eqref{eqn:m=m-v-mse-1}. 
Then we will bound $\E\bigl[\|z_i^t-f_i'(y_{i-1}^t)\|^2\bigr]$ and
$\E\bigl[\|y_{i-1}^t-F_{i-1}^t\|^2\bigr]$ separately.
Without loss of generality, we focus on the first epoch with $t=0,1,\ldots,\tau_1-1$, but the results hold for all the epochs. For the ease of notation, we denote $\epsilon_1$, $\tau_1$ and $B_i^1,S_i^1,b_i^1,s_i^1$ by $\epsilon$, $\tau$ and $B_i,S_i,b_i,s_i$ in this proof. 

\emph{Step 1: Bounding the temporal differences $\|y_i^t-y_i^{t-1}\|$.}
For $i=0$, we have $y_0^t=x^t$ and from~\eqref{eqn:npag-step-length}, 
$\|x^t-x^{t-1}\|\leq \eta\epsilon=\epsilon/2L_F\leq\epsilon/L_F$.
When $i=1$, for any $t\geq 1$, 
\begin{eqnarray}
	\|y_1^t-y_1^{t-1}\| \leq \bigg\|\frac{1}{s_1}\sum_{\xi_1\in\cS^r_1}\bigl(f_{1,\xi_1}(x^t)-f_{1,\xi_1}(x^{t-1})\bigr)\bigg\|\leq \ell_1\|x^t-x^{t-1}\|\leq \frac{\ell_1\epsilon}{L_F}.\nonumber
\end{eqnarray}
In general, for all $1\leq i\leq m-1$, we have 
\begin{eqnarray}
\label{eqn:m=m-y-diff.}
\qquad\quad \|y_i^t\!-\!y_i^{t-\!1}\| 
\leq \bigg\|\frac{1}{s_i}\!\!\sum_{\xi_i\in\cS^t_i}\!\!\bigl(f_{i,\xi_i}\!(y_{i-1}^t)\!-\!f_{i,\xi_i}\!(y_{i-1}^{t-1})\bigr)\bigg\|
\leq \ell_i\|y_{i-1}^t\!-\!y_{i-1}^{t-1}\|\leq \frac{\prod_{r=1}^i\ell_r}{L_F}\epsilon.
\end{eqnarray}

\emph{Step 2: Bounding $\|z_i^t\|$.}
We have $\|z_i^t\|\leq \|z^0_i\| + \sum_{r=1}^{t}\|z_i^r-z_i^{r-1}\|$ 
for all $1\leq i\leq m$. Moreover,
\[
\|z_i^r\!-\!z_i^{r-1}\| \leq \bigg\|\frac{1}{b_i}\!\sum_{\xi_i\in\cB^t_i}\!\bigl(f'_{i,\xi_i}(y_{i-1}^r)-f'_{i,\xi_i}(y_{i-1}^{r-1})\bigr)\bigg\|\leq L_i\|y_{i-1}^r-y_{i-1}^{r-1}\|\leq L_i\!\left(\sprod_{r=1}^{i-1}\ell_r\!\right)\!\frac{\epsilon}{L_F},
\]
where the last inequality is due to~\eqref{eqn:m=m-y-diff.}.
Consequently, for all $0\leq t\leq \tau-1$,
\begin{equation}
\label{eqn:m=m-z-bound}
\|z_i^t\|\leq \|z_i^0\| + t\cdot\left(\sprod_{r=1}^{i-1}\ell_r\right)\frac{L_i\epsilon}{L_F}\leq \ell_i + \tau\cdot L_i\left(\sprod_{r=1}^{i-1}\ell_r\right)\frac{\epsilon}{L_F}. 
\end{equation} 

\emph{Step 3: Bounding $\E\bigl[\|z^t_i-f'_i(y^t_{i-1})\|^2\bigr]$ and
$\E\bigl[\|y^t_i-f_i(y^t_{i-1})\|^2\bigr]$.}
By Lemma \ref{lem:spider}, we get 
\begin{eqnarray}
\E\bigl[\|z^t_i-f'_i(y^t_{i-1})\|^2\bigr]  & \leq & \E\bigl[\|z_i^0-f_i'(y_{i-1}^0)\|^2\bigr] + \frac{L_i^2}{b_i}\sum_{r=1}^t\E\bigl[\|y^r_{i-1} - y^{r-1}_{i-1}\|^2\bigr] \nonumber \\
& \leq & \frac{\sigma_i^2}{B_i} + {L_i^2}\left(\sprod_{r=1}^{i-1}\ell_r^2\right)\frac{\tau\epsilon^2}{b_iL^2_F}.
\label{eqn:m=m-z-mse}
\end{eqnarray}
Similarly, using Lemma~\ref{lem:spider} again, we obtain
\begin{eqnarray}
\label{eqn:m=m-y-mse}
\E\bigl[\|y^t_i-f_i(y^t_{i-1})\|^2\bigr]  
& \leq & \frac{\delta_i^2}{S_i} + \left(\sprod_{r=1}^{i}\ell_r^2\right)\frac{\tau\epsilon^2}{s_iL^2_F}.
\end{eqnarray}

\emph{Step 4: Bounding $\E[\|y_i^t-F_i(x^t)\|^2]$.}
We show that the following inequality holds for $i=1,\ldots,m$:
\begin{equation}
\label{eqn:m=m-y-full}
\E\bigl[\|y_i^t-F_i(x^t)\|^2\bigr] ~\leq~ i\cdot\left[\sum_{r=1}^{i}\biggl(\sprod_{j=r+1}^i\!\!\ell_j^2\biggr)\frac{\delta_r^2}{S_r}\right] + i\cdot\biggl(\sprod_{j=1}^i\ell_j^2\biggr)\left[\sum_{r=1}^i\frac{\tau\epsilon^2}{s_rL_F^2}\right]. 
\end{equation}
(Here we use the convention $\sprod_{j=i+1}^i\ell_j=1$.) 
We prove this result by induction.
With the notations $F_1=f_1$ and $y_0^t=x^t$, 
the base case of $i=1$ is the same as \eqref{eqn:m=m-y-mse}, 
thus already proven.
Suppose \eqref{eqn:m=m-y-full} holds for $i=k$, for $i = k+1$, we have 
\begin{eqnarray*} 
& & \E\bigl[\|y_{k+1}^t-F_{k+1}(x^t)\|^2\bigr]\\
& \leq & (1+k)\E\bigl[\|y_{k+1}^t-f_{k+1}(y_{k}^t)\|^2\bigr] + (1+k^{-1})\E\bigl[\|f_{k+1}(y_{k}^t) - f_{k+1}(F_{k}(x^t))\|^2\bigr]\\
& \leq & (1+k)\E\bigl[\|y_{k+1}^t-f_{k+1}(y_{k}^t)\|^2\bigr] + (1+k^{-1})\ell_{k+1}^2\E\bigl[\|y_{k}^t - F_{k}(x^t)\|^2\bigr]\\
& \leq & (1+k)\cdot\left[\frac{\delta_{k+1}^2}{S_{k+1}} + \left(\sprod_{r=1}^{k+1}\ell_r^2\right)\frac{\tau\epsilon^2}{s_{k+1}L^2_F}\right] + (1+k^{-1})\ell_{k+1}^2\cdot k\cdot\left[\sum_{r=1}^{k}\biggl(\sprod_{j=r+1}^k\ell_j^2\biggr)\frac{\delta_r^2}{S_r}\right]\\
& & + (1+k^{-1})\ell_{k+1}^2\cdot k\cdot\left[\biggl(\sprod_{j=1}^k\ell_j^2\biggr)\sum_{r=1}^k\frac{\tau\epsilon^2}{s_rL_F^2}\right]\\
& = & (k+1)\cdot\left[\biggl(\sprod_{j=r+1}^{k+1}\ell_j^2\biggr)\sum_{r=1}^{k+1}\frac{\delta_r^2}{S_r}\right] + (k+1)\cdot\biggl(\sprod_{j=1}^{k+1}\ell_j^2\biggr)\left[\sum_{r=1}^{k+1}\frac{\tau\epsilon^2}{s_rL_F^2}\right],
\end{eqnarray*}
which completes the induction. Therefore~\eqref{eqn:m=m-y-full} holds.

\smallskip

Now we go back to inequality~\eqref{eqn:m=m-v-mse-1} and first apply the bound
on $\|z_i^t\|$ in~\eqref{eqn:m=m-z-bound}.
Using
\[
    \tau = \frac{\ell_F}{2m\epsilon} = \frac{\sprod_{j=1}^m \ell_j}{2m\epsilon},
\]
we have for $r=1,\ldots,m$ and all $t\geq 0$,
\[
\|z_r^t\| \leq
\ell_r + \tau L_r\sprod_{j=1}^{r-1}\ell_j\frac{\epsilon}{L_F} 
= \ell_r\Biggl(1+ \frac{L_r\left(\sprod_{j=1}^{r-1}\ell_j^2\right)\sprod_{j=r+1}^m\ell_j}{2mL_F}\Biggr)
\leq \left(1+\frac{1}{2m}\right)\ell_r,
\]
where the last inequality is due to the definition of $L_F$ 
in~\eqref{eqn:Lip-F-m=m}.
Consequently,  
\[
\biggl(\sprod_{r=1}^{i-1}\!\ell_r^2\!\biggr)\!\biggl(\sprod_{r=i+1}^m \!\!\|z_r^t\|^2\!\biggr) \leq
\sprod_{r=1}^{i-1}\!\ell_r^2 \!\sprod_{r=i+1}^m\!\!\biggl(\ell_r^2\biggl(1+\frac{1}{2m}\biggr)^{\!\!2}\biggr) \leq \biggl(\!1+\frac{1}{2m}\!\biggr)^{\!\!2m}\!\!\sprod_{r\neq i}\!\ell_r^2 \leq 3 \!\sprod_{r\neq i}\!\ell_r^2,
\]
where we applied the inequality $(1+a^{-1})^a\leq e\leq 3$ for $a>0$. 
Applying the above inequality to~\eqref{eqn:m=m-v-mse-1}, we obtain
\begin{equation}\label{eqn:T1-and-T2}
\E\bigl[\|v^t-F'(x^t)\|^2\bigr] \leq \cT_1 + \cT_2,
\end{equation}
where 
\begin{align*}
    \cT_1& =6m\sum_{i=1}^m \biggl(\sprod_{r\neq i}\ell_r^2 \biggr) \E\bigl[\|z_i^t-f_i'(y_{i-1}^t)\|^2\bigr], \\
    \cT_2 & =6m\sum_{i=1}^{m-1} \biggl(\sprod_{r\neq i+1}\!\!\!\ell_r^2 \biggr)
L_{i+1}^2\E\bigl[\|y_i^t-F_i^t\|^2\bigr].
\end{align*}
Next we bound the two terms~$\cT_1$ and $\cT_2$ separately.

For $\cT_1$, we use \eqref{eqn:m=m-z-mse} and set $B_i=B$ and $b_i=b$ for all 
$i=1,\ldots,m$, which yields
\begin{eqnarray*} 
\cT_1 
& \leq &  6m\sum_{i=1}^m\sprod_{r\neq i}\ell_r^2\cdot\left(\frac{\sigma_i^2}{B_i} +{L_i^2}\left(\sprod_{r=1}^{i-1}\ell_r^2\right)\frac{\tau\epsilon^2}{b_iL^2_F}\right)\\
&=&  6m\left(\frac{1}{B}\sum_{i=1}^m\biggl(\sprod_{r\neq i}\ell_r^2\biggr)\sigma_i^2 + \frac{\tau\epsilon^2}{b}\frac{1}{L^2_F} \sum_{i=1}^m L_i^2\left(\sprod_{r=1}^{i-1}\ell_r^4\right)\left(\sprod_{r=i+1}^m\ell_r^2\right)\right).
\end{eqnarray*}
Now recall the definition of $\sigma_F^2$ in~\eqref{eqn:sigma-F-delta-F}
and definition of $L_F$ in~\eqref{eqn:Lip-F-m=m}, which implies
\begin{equation}\label{eqn:L2-sum-squares}
    L_F^2 ~=~
    \left(\sum_{i=1}^m L_i\left(\sprod_{r=1}^{i-1}\ell_r^2\right)\left(\sprod_{r=i+1}^m\ell_r\right)\right)^{\! 2} 
    ~\geq~
    \sum_{i=1}^m L_i^2\left(\sprod_{r=1}^{i-1}\ell_r^4\right)\left(\sprod_{r=i+1}^m\ell_r^2\right) .
\end{equation}
Therefore, we have
\[
\cT_1 ~\leq~ 6m\left(\frac{\sigma_F^2}{B} + \frac{\tau\epsilon^2}{b}\right)
    ~=~ \frac{1}{m+1}\epsilon^2,
\]
where the equality is due to the choice of parameters 
in~\eqref{eqn:m=m-B-b}.

For $\cT_2$, recalling our notation $F_i^t=F_i(x^t)$, 
we use~\eqref{eqn:m=m-y-full} to obtain
\begin{align*}
\cT_2 
& \leq 6m\!\sum_{i=1}^{m-1}\biggl(\sprod_{r\neq i+1}\!\!\!\ell_r^2\biggr)L_{i+1}^2\cdot i\cdot\left[\sum_{r=1}^{i}\frac{\delta_r^2}{S_r}\biggl(\sprod_{j=r+1}^i\!\!\ell_j^2\biggr) + \biggl(\sprod_{j=1}^i\ell_j^2\biggr)\sum_{r=1}^i\frac{\tau\epsilon^2}{s_rL_F^2}\right]\nonumber\\
& = 6m\!\sum_{r=1}^{m-1}\!\frac{\delta_r^2}{S_r}\sum_{i = r}^{m-1}\! i\!\cdot\!\! \biggl(\sprod_{j\neq i+1}\!\!\!\ell_j^2 \!\cdot\! L_{i+1}^2 \!\cdot\!\!\!\sprod_{j=r+1}^i\!\!\!\ell_j^2\!\biggr) + 6m\!\sum_{r=1}^{m-1}\!\frac{\tau\epsilon^2}{s_rL_F^2}\!\sum_{i=r}^{m-1}\! i \!\cdot\! \biggl(\sprod_{j=1}^i\!\!\ell_j^4 \!\cdot\! L_{i+1}^2 \!\cdot\!\!\! \sprod_{j=i+1}^m\!\!\!\ell_j^2\!\biggr)\nonumber\\
& \leq  6m^2\!\sum_{r=1}^{m-1}\!\frac{\delta_r^2}{S_r}\sum_{i = r}^{m-1} \!\biggl(\sprod_{j\neq i+1}\!\!\!\ell_j^2 \!\cdot\! L_{i+1}^2 \!\cdot\!\!\!\sprod_{j=r+1}^i\!\!\! \ell_j^2\biggr)\! + 6m^2\!\sum_{r=1}^{m-1}\!\frac{\tau\epsilon^2}{s_rL_F^2}\!\sum_{i=r}^{m-1}\!\biggl(\sprod_{j=1}^i\!\ell_j^4\!\cdot\! L_{i+1}^2 \!\cdot\!\!\!\sprod_{j=i+1}^m\!\!\!\ell_j^2\!\biggr)\nonumber\\
& =  6m^2\!\sum_{r=1}^{m-1}\!\!\frac{\delta_r^2}{S_r\!\sprod_{j=1}^r\!\ell_r^2}\!\!\sum_{i=r}^{m-1} \!\biggl(\sprod_{j=i+1}^m\!\!\!\ell_j^2\!\cdot\! L_{i+1}^2 \!\cdot\!\!\sprod_{j=1}^i\!\!\ell_j^4\!\biggr)\! + 6m^2\!\sum_{r=1}^{m-1}\!\!\frac{\tau\epsilon^2}{s_rL_F^2}\!\!\sum_{i=r}^{m-1}\!\biggl(\sprod_{j=1}^i\!\!\ell_j^4 \!\cdot\! L_{i+1}^2 \!\!\cdot\!\!\!\!\!\sprod_{j=i+1}^m\!\!\!\!\ell_j^2\!\biggr)\nonumber\\
& \leq  6m^2\sum_{r=1}^{m-1}\left(\frac{\delta_r^2L_F^2}{S_r\sprod_{j=1}^r\ell_r^2} +\frac{\tau\epsilon^2}{s_r}\right), \nonumber
\end{align*}
where in the second inequality we used $i\leq m$ and in the last inequality
we used~\eqref{eqn:L2-sum-squares}.
Now setting $S_r=S$ and $s_r=s$ for $r=1,\ldots,m-1$, and using the definition
of $\delta_F^2$ in~\eqref{eqn:sigma-F-delta-F}, we have
\[
\cT_2 ~\leq~ 6m^2\frac{\delta_F^2}{S} + 6m^3\frac{\tau\epsilon^2}{s}
~=~ \frac{m}{m+1}\epsilon^2,
\]
where the last equality is due to the choice of parameters 
in~\eqref{eqn:m=m-S-s}.

Finally, with the above bounds on $\cT_1$ and $\cT_2$, 
we get from~\eqref{eqn:T1-and-T2} that for any $t\geq 0$,
\[
\E[\|v^t-F'(x^t)\|^2]
~\leq~ \cT_1 + \cT_2
~\leq~ \frac{1}{m+1}\epsilon^2 + \frac{m}{m+1}\epsilon^2
~=~ \epsilon^2.
\]
This finishes the proof.
\end{proof}

As a result, we have the following theorem.

\begin{theorem}\label{thm:m=m-stochastic}
Consider problem~\eqref{eqn:multi-level-stochastic} with $m\geq 2$, and
suppose Assumptions~\ref{assumption:Lip-F} and~\ref{assumption:m=m} hold. 
In Algorithm~\ref{alg:nested-spider-m}, if we set $\epsilon_k = \frac{m\theta L_F}{k\cdot\ell_F}$ with $\theta>0$ and set the other parameters as 
in Lemma~\ref{lemma:m=m-v-mse},  
then the output~$\bar{x}$ satisfies $\E[\|\cG(\bar{x})\|]\leq\epsilon$ after
$K = \tilde O\left(\frac{mL_F}{\epsilon\cdot\ell_F}\right)$ epochs, and the total sample complexity is
$\tilde O\bigl(m^4 L_F(\sigma_F^2+\delta_F^2+\ell_F^2)\epsilon^{-3}\bigr)$.
\end{theorem}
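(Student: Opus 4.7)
The plan is to combine Lemma~\ref{lemma:m=m-v-mse} with Theorem~\ref{theorem:cvg-SNPAG} under the variable-precision sequence $\epsilon_t = \epsilon_{\chi(t)}$, then perform bookkeeping on the per-epoch sample counts. First I would verify that Theorem~\ref{theorem:cvg-SNPAG} remains valid with a variable precision sequence: the proof there already handles general $\{\epsilon_t\}$, so it suffices to observe that Lemma~\ref{lemma:m=m-v-mse} with the stated choices of $\tau_k,B_i^k,b_i^k,S_i^k,s_i^k$ gives $\E[\|v^t-F'(x^t)\|^2]\leq\epsilon_{\chi(t)}^2$ for every $t$, which is exactly condition~\eqref{eqn:vt-cond} with $\epsilon_t=\epsilon_{\chi(t)}=\epsilon_k$ when $\chi(t)=k$. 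Moreover, the bounded-step-length property~\eqref{eqn:npag-step-length} that Lemma~\ref{lemma:m=m-v-mse} relies on is automatic in Algorithm~\ref{alg:nested-spider-m} because $\gamma_t$ is truncated by $\eta\epsilon_{\chi(t)}/\|\tilde x^{t+1}-x^t\|$.

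Next I would plug the resulting MSE bound into Theorem~\ref{theorem:cvg-SNPAG} and compute the two sums that appear in its guarantee. Since each epoch $k$ contains exactly $\tau_k=\ell_F/(2m\epsilon_k)$ iterations, one gets
\begin{equation*}
\sum_{t=0}^{T-1}\epsilon_t \;=\; \sum_{k=1}^K \tau_k\epsilon_k \;=\; \frac{K\ell_F}{2m},
\qquad
\sum_{t=0}^{T-1}\epsilon_t^2 \;=\; \sum_{k=1}^K \tau_k\epsilon_k^2 \;=\; \frac{\ell_F}{2m}\sum_{k=1}^K\epsilon_k.
\end{equation*}
Substituting $\epsilon_k=m\theta L_F/(k\ell_F)$, the first sum above is $\Theta(K\ell_F/m)$ while $\sum_{k=1}^K\epsilon_k=\Theta(\theta L_F H_K/\ell_F)$ with $H_K$ the harmonic number. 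Theorem~\ref{theorem:cvg-SNPAG} then yields
\begin{equation*}
\E\bigl[\|\cG(\bar x)\|\bigr] \;\leq\; \frac{8mL_F(\Phi(x^0)-\Phi_*)}{K\ell_F} + \frac{4m\theta L_F H_K}{K\ell_F},
\end{equation*}
and solving for $K$ (absorbing the $H_K$ factor into the $\tilde O(\cdot)$ notation) gives the claimed epoch count $K=\tilde O(mL_F/(\epsilon\ell_F))$.

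Finally I would tally the total sample complexity by summing the per-epoch cost. In epoch $k$, the initial iteration draws $\sum_{i=1}^m(B_i^k+S_i^k) = O(m^3(\sigma_F^2+\delta_F^2)/\epsilon_k^2)$ samples, while each of the remaining $\tau_k-1$ iterations draws $\sum_{i=1}^m(b_i^k+s_i^k)=O(m^3\ell_F/\epsilon_k)$ samples, giving an epoch cost of $O\bigl(m^3(\sigma_F^2+\delta_F^2+\ell_F^2)/\epsilon_k^2\bigr)$ after multiplying by $\tau_k=O(\ell_F/(m\epsilon_k))$ (the $\ell_F^2$ term comes from this step). Summing this over $k=1,\dots,K$ with $1/\epsilon_k^2=\Theta(k^2\ell_F^2/(m^2 L_F^2))$ contributes a factor $\sum_{k=1}^K k^2=\Theta(K^3)$. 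Substituting $K=\tilde O(mL_F/(\epsilon\ell_F))$ produces the target bound $\tilde O\bigl(m^4 L_F(\sigma_F^2+\delta_F^2+\ell_F^2)\epsilon^{-3}\bigr)$.

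The main obstacle will be verifying the per-epoch arithmetic cleanly: one has to track how the products of $m$ factors in each $B_i^k,b_i^k,S_i^k,s_i^k$ combine with $\tau_k$ and the harmonic factor $H_K$ to yield the stated $m^4$ (rather than $m^3$ or $m^5$) dependence, and to check that the initialization cost $\sum_{i}(B_i^k+S_i^k)$ is dominated by the aggregate incremental-batch cost $\tau_k\sum_i(b_i^k+s_i^k)$ so that it does not require separate accounting. The rest of the argument is essentially a mechanical substitution of Lemma~\ref{lemma:m=m-v-mse} into Theorem~\ref{theorem:cvg-SNPAG}.
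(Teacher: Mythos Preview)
Your proposal is correct and follows essentially the same route as the paper's proof: invoke Lemma~\ref{lemma:m=m-v-mse} to verify condition~\eqref{eqn:vt-cond}, apply Theorem~\ref{theorem:cvg-SNPAG}, evaluate $\sum_t\epsilon_{\chi(t)}=K\ell_F/(2m)$ and $\sum_t\epsilon_{\chi(t)}^2=O(\theta L_F\ln K)$, solve for~$K$, and then sum the per-epoch batch costs using $\sum_k\epsilon_k^{-2}=\Theta(K^3\ell_F^2/(m^2\theta^2 L_F^2))$. One small slip: you wrote $\sum_{k=1}^K\epsilon_k=\Theta(\theta L_F H_K/\ell_F)$, but with $\epsilon_k=m\theta L_F/(k\ell_F)$ this sum carries an extra factor of~$m$; fortunately the~$m$ cancels against the $1/m$ in $\tau_k\epsilon_k^2=(\ell_F/2m)\epsilon_k$, so your displayed bound on $\E[\|\cG(\bar x)\|]$ is nonetheless correct.
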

\begin{proof}
From Theorem~\ref{theorem:cvg-SNPAG} and Lemma~\ref{lemma:m=m-v-mse}, 
the output of Algorithm~\ref{alg:nested-spider-m} satisfies
\begin{equation} \label{thm:m-adp-1} 
	\E\bigl[\|\mathcal{G}(\bar{x})\|\bigr] \leq  \frac{4L_F(\Phi(x^0)-\Phi_*)}{\sum_{t=0}^{T-1}\epsilon_{\chi(t)}} + \frac{4\sum_{t=0}^{T-1}\epsilon^2_{\chi(t)}}{\sum_{t=0}^{T-1}\epsilon_{\chi(t)}}.  
\end{equation}
Note that we have 
	\begin{eqnarray*}
	\sum_{t=0}^{T-1}\epsilon_{\chi(t)} & = & \sum_{k=1}^K\tau_k\epsilon_k = \sum_{k=1}^K\frac{\ell_F}{2m\epsilon_k}\cdot\epsilon_k = \frac{\ell_FK}{2m},\\
	\sum_{t=0}^{T-1}\epsilon_{\chi(t)}^2 & = & \sum_{k=1}^K\tau_k\epsilon_k^2 = \sum_{k=1}^K\frac{\ell_F}{2m}\cdot\frac{m\theta L_F}{k\cdot\ell_F} \leq \theta L_F\ln (K).
	\end{eqnarray*} 
Substituting the above inequalities into \eqref{thm:m-adp-1} yields 
    $$\E\left[\|\cG(\bar x)\|\right] \leq \frac{8mL_F\big(\Phi(x^0) - \Phi_* + \theta\ln K\big)}{K\cdot \ell_F}\leq \tilde O\left(\frac{mL_F}{K\cdot\ell_F}\right).$$ 
Consequently, we can choose $K = \tilde O\left(\frac{mL_F}{\epsilon\cdot\ell_F}\right)$ such that $\E\left[\|\cG(\bar x)\|\right]\leq\epsilon$. With the $B_i^k, b_i^k, S_i^k, s_i^k$ defined in Lemma~\ref{lemma:m=m-v-mse}, the total sample complexity is 
    \begin{eqnarray}
    \sum_{k=1}^K\!\sum_{i=1}^m\bigl(B_i^k \!+\! (\tau_k\!-\!1)b_i^k\bigr)
    \!+\!\sum_{k=1}^K\!\sum_{i=1}^{m-1} \bigl(S_i^k \!+\! (\tau_k\!-\!1)s_i^k\bigr) \!& = &\! O\!\biggl(\!\frac{mK^3\bigl(\sigma_F^2+\delta_F^2+\ell_F^2/m\bigr)}{L_F^2/\ell_F^2}\!\biggr)\nonumber\\
    \!& = &\! \tilde O\biggl(\!\frac{m^4 L_F(\sigma_F^2+\delta_F^2+\ell_F^2)}{\epsilon^3}\!\biggr)\nonumber.
    \end{eqnarray}
This finishes the proof.
\end{proof}

Next, we present our results of the finite-sum optimization problem.
\begin{theorem}\label{thm:m=m-finite-sum}
	Consider problem~\eqref{eqn:multi-level-finite-sum} with $m\geq 2$, and suppose Assumptions~\ref{assumption:Lip-F} and~\ref{assumption:m=m}.(a) hold. In addition, let $N_\mathrm{max}=\max\{N_1,\ldots,N_m\}$ and assume the target precision $\epsilon$ satisfies $\sqrt{N_\mathrm{max}} \leq \frac{\ell_F}{2m\epsilon}.$
In Algorithm~\ref{alg:nested-spider-m}, we set $\eta=1/2L_F$. For $k = 1,...,K$ and $i = 1,...,m$, we set $\epsilon_k = \big(\frac{\theta L_F}{k\cdot\sqrt{N_{max}}}\big)^{1/2}$ , $\theta>0$, $\tau_k\equiv\tau=\sqrt{N_\mathrm{max}}$ and
\[
	b_i^k\equiv\min\Big\{6m(m+1)\sqrt{N_\mathrm{max}},\, N_i\Big\}, \qquad
	s_i^k\equiv \min\Big\{6m^2(m+1)\sqrt{N_\mathrm{max}},\, N_i\Big\}. 
\]
Moreover, for every~$t$ at the beginning of each epoch, let $\cB_i^t=\cS_i^t=\{1,\ldots,N_i\}$ be the full batches for $i=1,\ldots,m.$
Then the output $\bar{x}$ satisfies $\E[\|\cG(\bar{x})\|]\leq\epsilon$ 
after $K= \tilde O\bigl(\frac{L_F}{\sqrt{N_{max}}\cdot\epsilon^{2}}\bigr)$ epochs, and the sample complexity is 
$O\bigl(\sum_{i=1}^m N_i + m^4 L_F\sqrt{N_\mathrm{max}}\epsilon^{-2}\bigr)$.
\end{theorem}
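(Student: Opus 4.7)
The plan is to mirror the proof of Theorem~\ref{thm:m=m-stochastic}, with the mini-batch initialization at each epoch replaced by exact full-batch evaluation. The first step is to establish a finite-sum analog of Lemma~\ref{lemma:m=m-v-mse}: for every iteration $t$ in epoch $k$, $\E[\|v^t - F'(x^t)\|^2] \leq \epsilon_k^2$. At the start of each epoch, the full batches $\cB_i^t = \cS_i^t = \{1,\ldots,N_i\}$ produce $y_i^0 = f_i(y_{i-1}^0)$ and $z_i^0 = f'_i(y_{i-1}^0)$ exactly, so an induction on $i$ gives $y_i^0 = F_i(x^0)$ and $z_i^0 = f'_i(F_{i-1}(x^0))$, whence $v^0 = F'(x^0)$ deterministically. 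The $\sigma_F^2/B$ and $\delta_F^2/S$ contributions in the proof of Lemma~\ref{lemma:m=m-v-mse} therefore vanish, leaving only the SARAH/\textsc{Spider} accumulation contributions proportional to $\tau_k/b_i^k$ and $\tau_k/s_i^k$.

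The four-step template of the Lemma~\ref{lemma:m=m-v-mse} proof then carries over with zero starting variance. Step~1 still yields $\|y_i^t-y_i^{t-1}\| \leq (\prod_{r=1}^i \ell_r)\epsilon_k/L_F$ from the NPAG step-length bound~\eqref{eqn:npag-step-length}. Step~2 needs $\|z_i^t\| \leq (1+\tfrac{1}{2m})\ell_i$, which follows because $\tau_k = \sqrt{N_\mathrm{max}} \leq \ell_F/(2m\epsilon) \leq \ell_F/(2m\epsilon_k)$ using the theorem's hypothesis and the fact that $\epsilon_k \geq \epsilon_K = \epsilon$. Steps~3 and~4 reduce to the same bounds as before, and the requirement $\cT_1 + \cT_2 \leq \epsilon_k^2$ becomes $b_i^k \geq 6m(m+1)\tau_k$ and $s_i^k \geq 6m^2(m+1)\tau_k$, which are exactly the stated parameter choices. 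The $\min\{\cdot, N_i\}$ truncation corresponds to using a full batch at level~$i$, in which case the SARAH/\textsc{Spider} increment is deterministic and Lemma~\ref{lem:spider}'s upper bound remains valid.

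With the MSE bound in place, Theorem~\ref{theorem:cvg-SNPAG} gives $\E[\|\cG(\bar x)\|] \leq \bigl(4L_F(\Phi(x^0)-\Phi_*) + 4\sum_{t}\epsilon^2_{\chi(t)}\bigr)/\sum_{t}\epsilon_{\chi(t)}$. Substituting $\tau_k \equiv \sqrt{N_\mathrm{max}}$ and $\epsilon_k = (\theta L_F/(k\sqrt{N_\mathrm{max}}))^{1/2}$, I compute $\sum_{t=0}^{T-1}\epsilon_{\chi(t)} = \sum_{k=1}^K \tau_k\epsilon_k = \Theta\bigl(\sqrt{\sqrt{N_\mathrm{max}}\,\theta L_F\,K}\bigr)$ and $\sum_{t=0}^{T-1}\epsilon^2_{\chi(t)} = \sum_{k=1}^K \tau_k\epsilon_k^2 = \theta L_F\sum_{k=1}^K 1/k = O(\theta L_F\ln K)$. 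The ratio yields $\E[\|\cG(\bar x)\|] = \tilde O\bigl(\sqrt{L_F/(K\sqrt{N_\mathrm{max}})}\bigr)$, so $K = \tilde O\bigl(L_F/(\sqrt{N_\mathrm{max}}\,\epsilon^2)\bigr)$ epochs suffice. Each epoch costs $\sum_{i=1}^m N_i$ for the initial full batch plus $(\tau_k-1)\sum_{i=1}^m(b_i^k+s_i^k) = O(m^4 N_\mathrm{max})$ for the inner steps; multiplying by $K$ and using $\sum_i N_i \leq m N_\mathrm{max}$ gives total complexity $\tilde O(\sum_i N_i + m^4 L_F\sqrt{N_\mathrm{max}}/\epsilon^2)$.

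The main obstacle is the careful re-execution of the four-step induction of Lemma~\ref{lemma:m=m-v-mse} when some batches coincide with the full set $\{1,\ldots,N_i\}$ via the $\min$ truncation, checking that the multiplicative constants involving $m$ and $\ell_F$ are unchanged and that the bound $\tau_k \leq \ell_F/(2m\epsilon_k)$ remains valid for every $k = 1,\ldots,K$ (not just at $k = K$). Once this bookkeeping is confirmed, the remaining computation of $K$ and total sample count is essentially algebraic, and the proof is completed exactly as in Theorem~\ref{thm:m=m-stochastic}.
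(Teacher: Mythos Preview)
Your approach is essentially the same as the paper's: establish the finite-sum analogue of Lemma~\ref{lemma:m=m-v-mse} (with full batches at epoch starts killing the $\sigma_F^2/B$ and $\delta_F^2/S$ terms), then apply Theorem~\ref{theorem:cvg-SNPAG} and sum. Your arithmetic for $\sum_k\tau_k\epsilon_k$, $\sum_k\tau_k\epsilon_k^2$, and the sample count is correct and matches the paper.

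However, your Step~2 argument contains a reversed inequality. You write
\[
\tau_k=\sqrt{N_{\max}}\leq \frac{\ell_F}{2m\epsilon}\leq \frac{\ell_F}{2m\epsilon_k}
\]
invoking ``$\epsilon_k\geq\epsilon_K=\epsilon$.'' But if $\epsilon_k\geq\epsilon$ then $\ell_F/(2m\epsilon_k)\leq \ell_F/(2m\epsilon)$, so the second inequality goes the wrong way. What you actually need is $\tau_k\epsilon_k\leq \ell_F/(2m)$, i.e.\ $\epsilon_k\leq \ell_F/(2m\sqrt{N_{\max}})$; since $\epsilon_k$ is decreasing in~$k$, this is most restrictive at $k=1$, where $\epsilon_1=(\theta L_F/\sqrt{N_{\max}})^{1/2}$. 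The hypothesis $\sqrt{N_{\max}}\leq\ell_F/(2m\epsilon)$ does not by itself bound $\epsilon_1$ from above. The cleanest fix is to require $\theta\leq \ell_F^2/(4m^2 L_F\sqrt{N_{\max}})$, which makes $\epsilon_1\leq \ell_F/(2m\sqrt{N_{\max}})$ and hence $\tau_k\epsilon_k\leq\ell_F/(2m)$ for all~$k$. (The paper's proof simply asserts ``through a similar line of proof'' and does not display this step, so the issue is glossed over there as well; but your writeup makes the claim explicit and it needs repair.) Also note that $\epsilon_K$ is only \emph{proportional} to $\epsilon$ (it equals $\sqrt{\theta}\,\epsilon$ up to the $\tilde O$ choice of~$K$), not exactly equal as you wrote.
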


\begin{proof}
	Through a similar line of proof of Lemma \ref{lemma:m=m-v-mse}, $\E\big[\|v^t - F'(x^t)\|^2\big]\leq\epsilon_{\chi(t)}^2$ still holds. Consequently \eqref{thm:m-adp-1} also holds in this case. Note that 
	\begin{eqnarray*}
		\sum_{t=0}^{T-1}\epsilon_{\chi(t)} & = & \sum_{k=1}^K\tau_k\epsilon_k = \sum_{k=1}^K\sqrt{N_{max}}\cdot\sqrt{\frac{\theta L_F}{k\cdot\sqrt{N_{max}}}} \geq 2N_{max}^{\frac{1}{4}}\sqrt{(K+1)\cdot\theta L_F},\\
		\sum_{t=0}^{T-1}\epsilon_{\chi(t)}^2 & = & \sum_{k=1}^K\tau_k\epsilon_k^2 = \sum_{k=1}^K\sqrt{N_{max}}\cdot\frac{\theta L_F}{k\cdot\sqrt{N_{max}}}  \leq 2\theta L_F \ln(K).
	\end{eqnarray*} 
	Substitute the above inequalities into \eqref{thm:m-adp-1}, we get $\E\left[\|\cG(\bar x)\|\right] \leq \tilde O\big((\frac{L_F}{\sqrt{N_{max}}\cdot K})^\frac{1}{2}\big)$. As result, it suffices to set the number of epochs $K = \tilde O\big(\frac{L_F}{\sqrt{N_{max}}\cdot\epsilon^{2}}\big)$ to guarantee $\E\left[\|\cG(\bar x)\|\right] \leq\epsilon$. Consequently, the total sample complexity is 
	\begin{eqnarray}
	\sum_{k=1}^K\!\sum_{i=1}^m\bigl(N_i \!+\! (\tau_k\!-\!1)b_i^k\bigr)
	\!+\!\sum_{k=1}^K\!\sum_{i=1}^{m-1} \bigl(N_i \!+\! (\tau_k\!-\!1)s_i^k\bigr) \!& = &\!\tilde O\!\left(\!\sum_{i=1}^m N_i \!+\! m^4 L_F\sqrt{N_\mathrm{max}}\epsilon^{-2}\!\right)\nonumber.
	\end{eqnarray}
\end{proof}

If the condition that $\sqrt{N_\mathrm{max}} \leq \frac{\ell_F}{2m\epsilon}$ does not hold, then
we can use the setting in Theorem~\ref{thm:m=m-stochastic} and obtain
the same order or slightly better sample complexity.

\begin{remark}
	\label{remark:m=m-stochastic}
	In Theorem \ref{thm:m=m-stochastic}, if we set $\epsilon_k\equiv\epsilon$, then the Algorithm output $\bar x$ s.t. $\E[\|\cG(\bar x)\|]\leq\epsilon$ after $K = O(\frac{mL_F}{\epsilon\cdot\ell_F})$ epochs, and the total sample complexity is $O(m^4 L_F(\sigma_F^2+\delta_F^2+\ell_F^2)\epsilon^{-3})$. Similarly, in Theorem \ref{thm:m=m-finite-sum}, if we set $\epsilon_k\equiv\epsilon$, then the Algorithm output $\bar x$ s.t. $\E[\|\cG(\bar x)\|]\leq\epsilon$ after $K = O(\frac{L_F}{\sqrt{N_{max}}\cdot\epsilon^2})$ epochs, and the total sample complexity is $O\left(\sum_{i=1}^m N_i + m^4 L_F\sqrt{N_\mathrm{max}}\epsilon^{-2}\right)$. Although using constant $\epsilon_k\equiv\epsilon$ improves the sample complexities of Theorem \ref{thm:m=m-stochastic} and \ref{thm:m=m-finite-sum} by a logarithmic factor, the $O(\epsilon)$ step length may be too consevative in the early stages. In practice, the adaptive setting of $\epsilon_k$ in Theorem \ref{thm:m=m-stochastic} and \ref{thm:m=m-finite-sum} can be much more efficient. 
\end{remark}

\section{Numerical experiments}
\label{sec:numerical}

In this section, we present numerical experiments to demonstrate the effectiveness of the proposed algorithms and compare them with related work.
We first apply NPAG with different variance-reduced estimators on a noncovex sparse classification problem. Since this is a one-level finite-sum problem, we also compare it with ProxSARAH \cite{ProxSARAH2019}.
Then we consider a sparse portfolio selection problem, which has a two-level composition structure. We compare the Nested-\textsc{SPIDER} method with several other methods for solving two-level problems. 

\subsection{Sparse binary classification}

We consider the following $\ell_1$-regularized empirical risk minimization problem:
\begin{equation}
\label{prob:Sparse-Binary}
\min_{w} \frac{1}{N}\sum_{i=1}^N \ell(a_i^Tx,b_i) + \beta\|x\|_1,
\end{equation}
where $\{(a_i,b_i)\}_{i=1}^N$ are $N$ labeled examples with feature vectors $a_i\in\R^d$ and binary labels $y_i\in\{-1,,+1\}$. 
Specifically, we consider two different loss functions: 
the logistic difference loss 
$\ell_{ld}(t,b) = \log\big(1 + e^{-bt}\big) - \log\big(1 + e^{-bt-1}\big)$, 
and the 2 layer neural network loss 
$\ell_{nn}(t,b) = \big(1 - \frac{1}{1 + e^{-bt}}\big)^2$.  
For the $\ell_1$-norm penalty, we set $\beta = 1/N$.

This is a standard one-level finite-sum problem, corresponding to the case of $m=1$ in~\eqref{eqn:multi-level-finite-sum}.
We compare the ProxSPIDER method (Algorithm~\ref{alg:prox-spider}) with the ProxSARAH algorithm \cite{ProxSARAH2019}. 
ProxSARAH can be implemented with two schemes: for the variable stepsize scheme (Theorem 5, \cite{ProxSARAH2019}) we use the name ProxSARAH-vstp; for the constant stepsize scheme (Theorem 6, \cite{ProxSARAH2019}) we call it ProxSARAH-cstp.
We also include the SVRG and SAGA estimator with normalized stepsize rule as described in Section \ref{subsec:SVRG} and \ref{subsec:SAGA} respectivley, which we call ProxSVRG and ProxSAGA.

\begin{figure}[t]
	\centering 
	\includegraphics[width=0.45\linewidth]{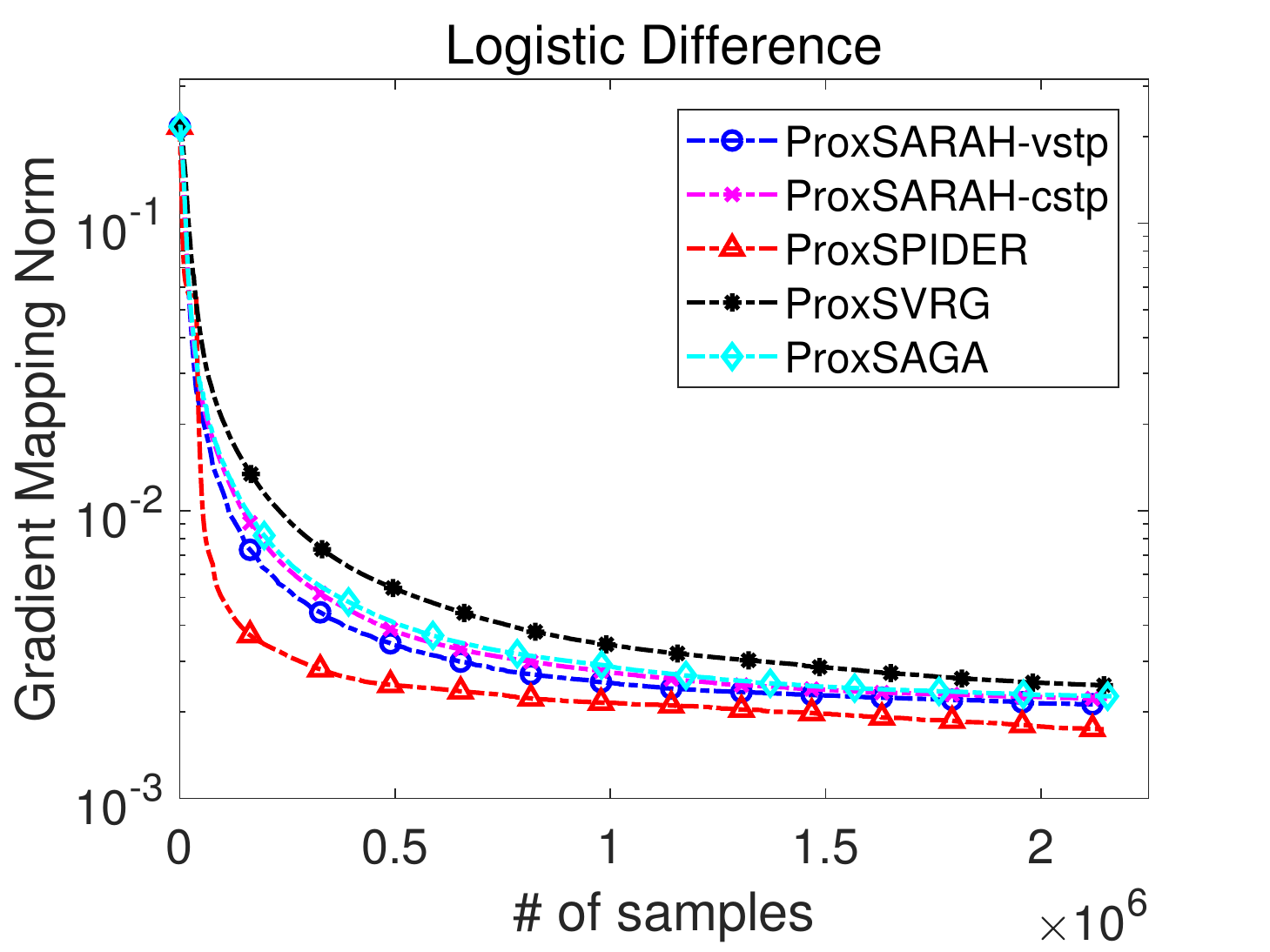}
	\hspace{2ex} 
	\includegraphics[width=0.45\linewidth]{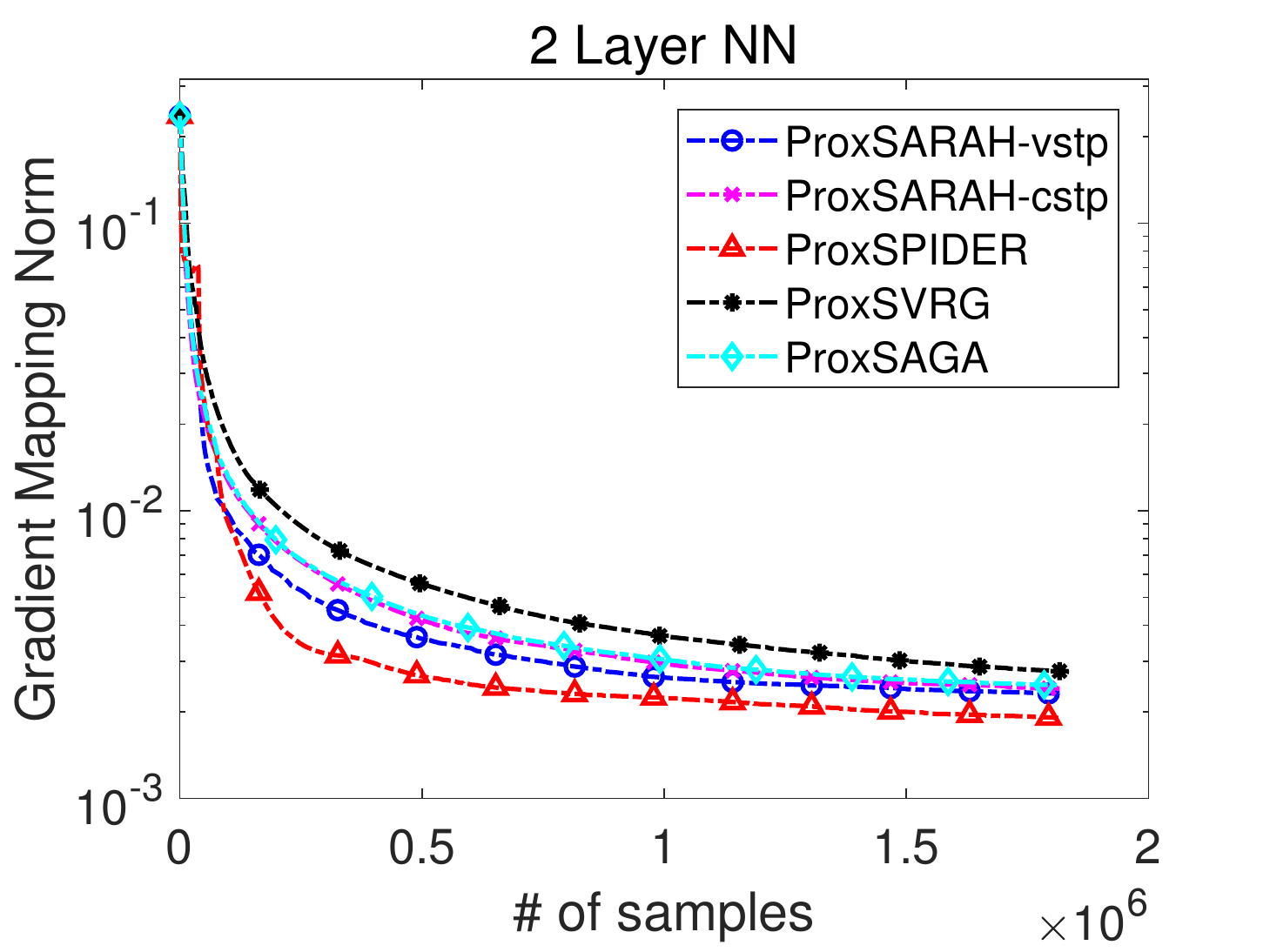} 
    \\[2ex]
	\includegraphics[width=0.45\linewidth]{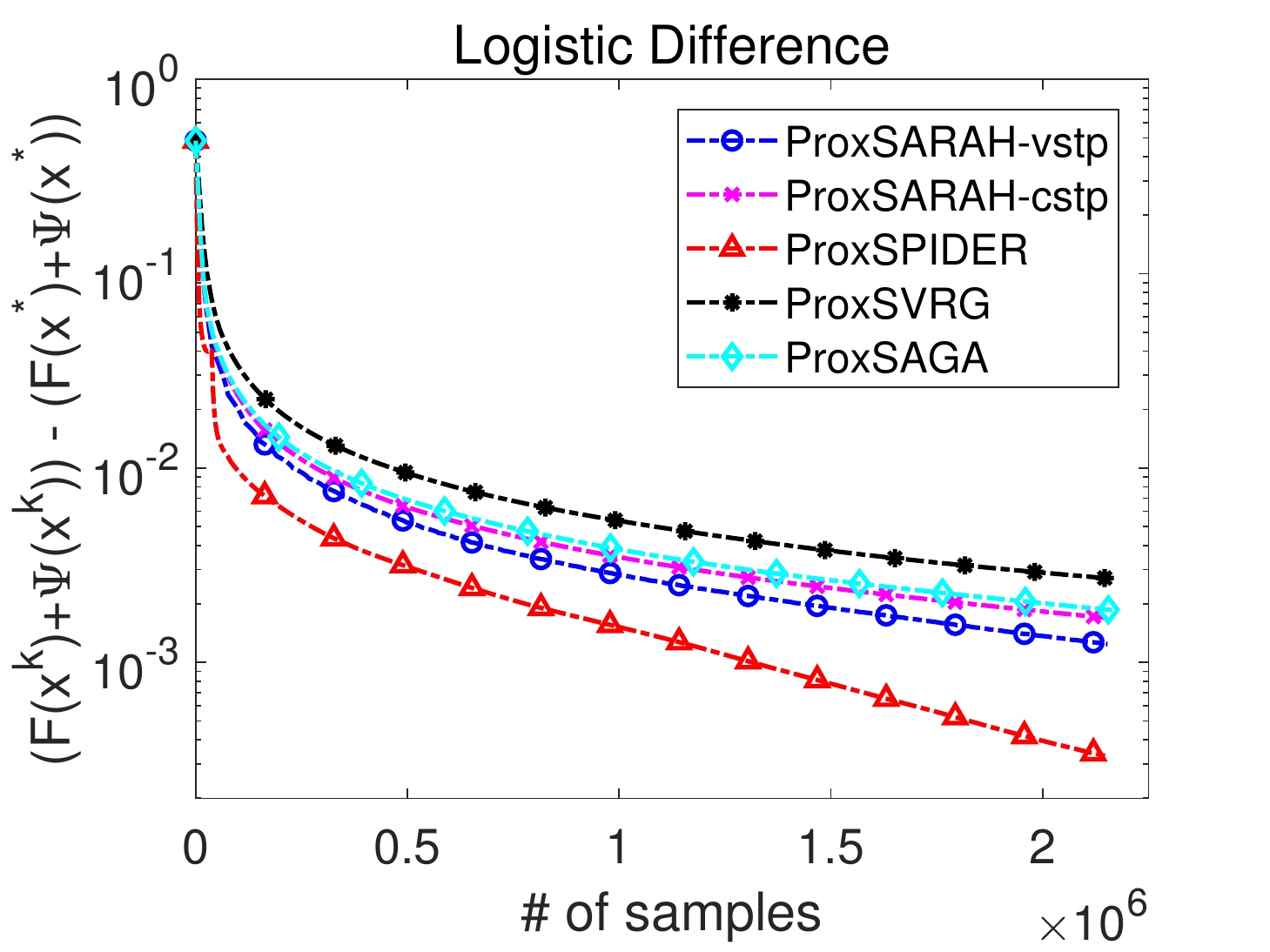}
	\hspace{2ex} 
	\includegraphics[width=0.45\linewidth]{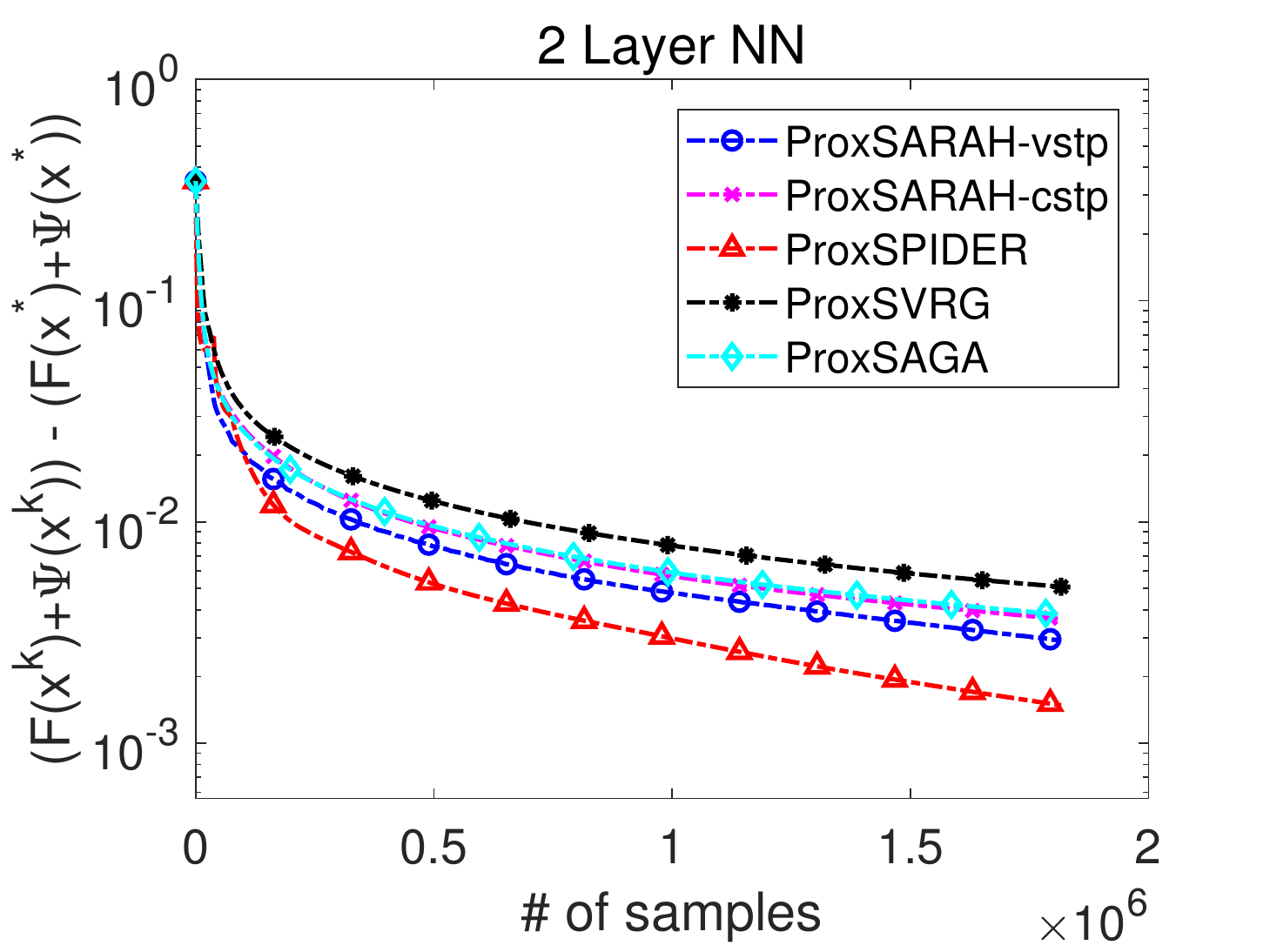} 
	\caption{Experiments on sparse binary classification on \emph{mnist} dataset, $\beta = 1/12691$.}
	\label{fig:mnist}
\end{figure}

The parameter setting (batch size, epoch length, etc.) of ProxSARAH-vstp and ProxSARAH-cstp are made to be consistent with the ProxSARAH-A-v1 and ProxSARAH-v1 in \cite{ProxSARAH2019} where each mini-batch consists of only 1 sample, this is the parameter setting that yields the best empirical performance in \cite{ProxSARAH2019}. 
For ProxSPIDER, we set both the batchsize and epoch length to be $\big\lceil\sqrt{N}\big\rceil$. For ProxSVRG and ProxSAGA, we set the batchsize to be $\big\lceil N^\frac{2}{3}\big\rceil$, the epoch length to be $\big\lceil N^\frac{1}{3}\big\rceil$.  Finally, for ProxSPIDER, ProxSVRG and ProxSAGA, we use a variable sequence $\epsilon_k$ as in NPAG (Algorithm~\ref{alg:NPAG} and set $\epsilon_k = 10/\sqrt{k}$. 

First, we compare these methods over the \emph{mnist}\footnote{http://yann.lecun.com/exdb/mnist/} data sets. To fit the binary classification problems, we extract the data of two arbitrary digits, which are 1 and 9 in our experiments. This subset of mnist consists $N=12691$ data points. 
For the ProxSARAH-vstp and ProxSARAH-cstp, the parameter to be tuned is the estimator of the Lipschtiz constant $L$. The smaller $L$ is, the larger the stepsize is. For the ProxSPIDER, ProxSVRG and ProxSAGA, the parameter to be tuned is the stepsize $\eta$, which corresponds to $1/L$ in this experiment. 
We choose $L$ among $\{0.01,0.1,1,10\}$ and it turns out that $L = 1$ works best for ProxSARAH-vstp and ProxSARAH-cstp for both loss functions. 
We choose $\eta$ among $\{0.1, 1, 10, 100\}$ and it turns out that $\eta = 1$ works best for ProxSIDER, ProxSVRG and ProxSAGA. 
These choices are consistent with the relationship $\eta=1/L$.

Figure~\ref{fig:mnist} shows the norm of the gradient mapping and objective value gap of different methods.
For all the compared methods, the curves are averaged over 20 runs of the algorithm.
It can be seen that these different methods have similar performance, with ProxSVRG and ProxSAGA slightly worse and ProxSPIDER slightly better than other methods. In theory ProxSARAH and ProxSPIDER have the same sampling complexity, and the empirical results are consistent with the theory.
Since the plots are very similar for the norm of gradient mapping and the objective value gap, we will only present plots of the gradient mapping norms for the rest of the experiments.

We also conduct our experiment on the \emph{rcv1.binary}\footnote{https://www.csie.ntu.edu.tw/~cjlin/libsvmtools/datasets/binary.html} dataset, which is a medium sized dataset with $N = 20242$ data points and each data point has $d=47236$ features. 
The parameters are chosen in a similar way to the experiments on \emph{mnist} dataset. 
In particular, $L = 0.01$ works best for ProxSARAH-vstp and ProxSARAH-cstp for both loss functions and $\eta = 100$ works best for ProxSIDER, ProxSVRG and ProxSAGA (again we have $\eta=1/L$).
Figure \ref{fig:rcv1-binary'} shows the results of comparison. 
In this particular case, ProxSAGA perform the best despite a worse sample complexity in theory.

\begin{figure}[t]
	\centering 
	\includegraphics[width=0.45\linewidth]{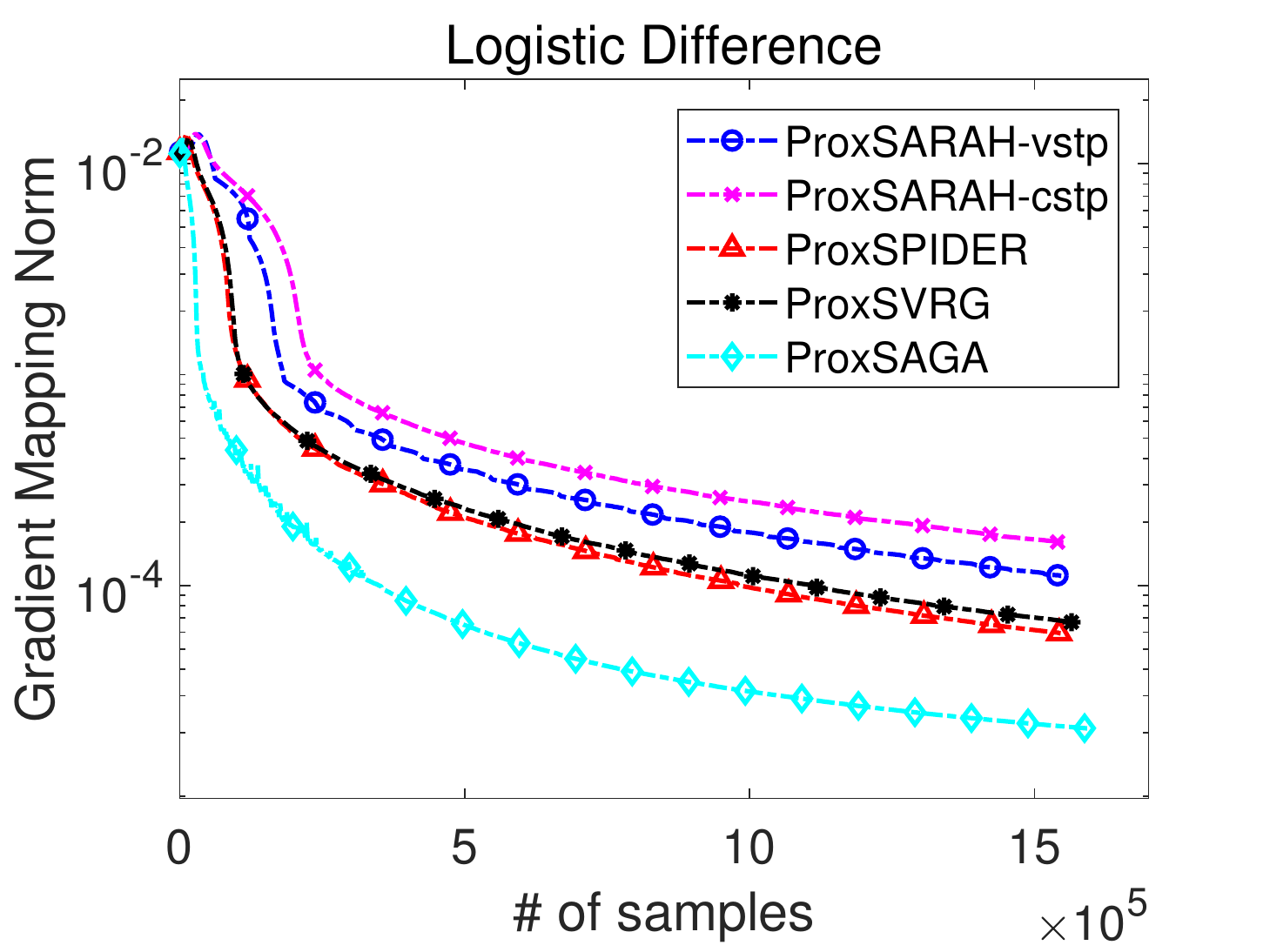}
    \hspace{2ex}
	\includegraphics[width=0.45\linewidth]{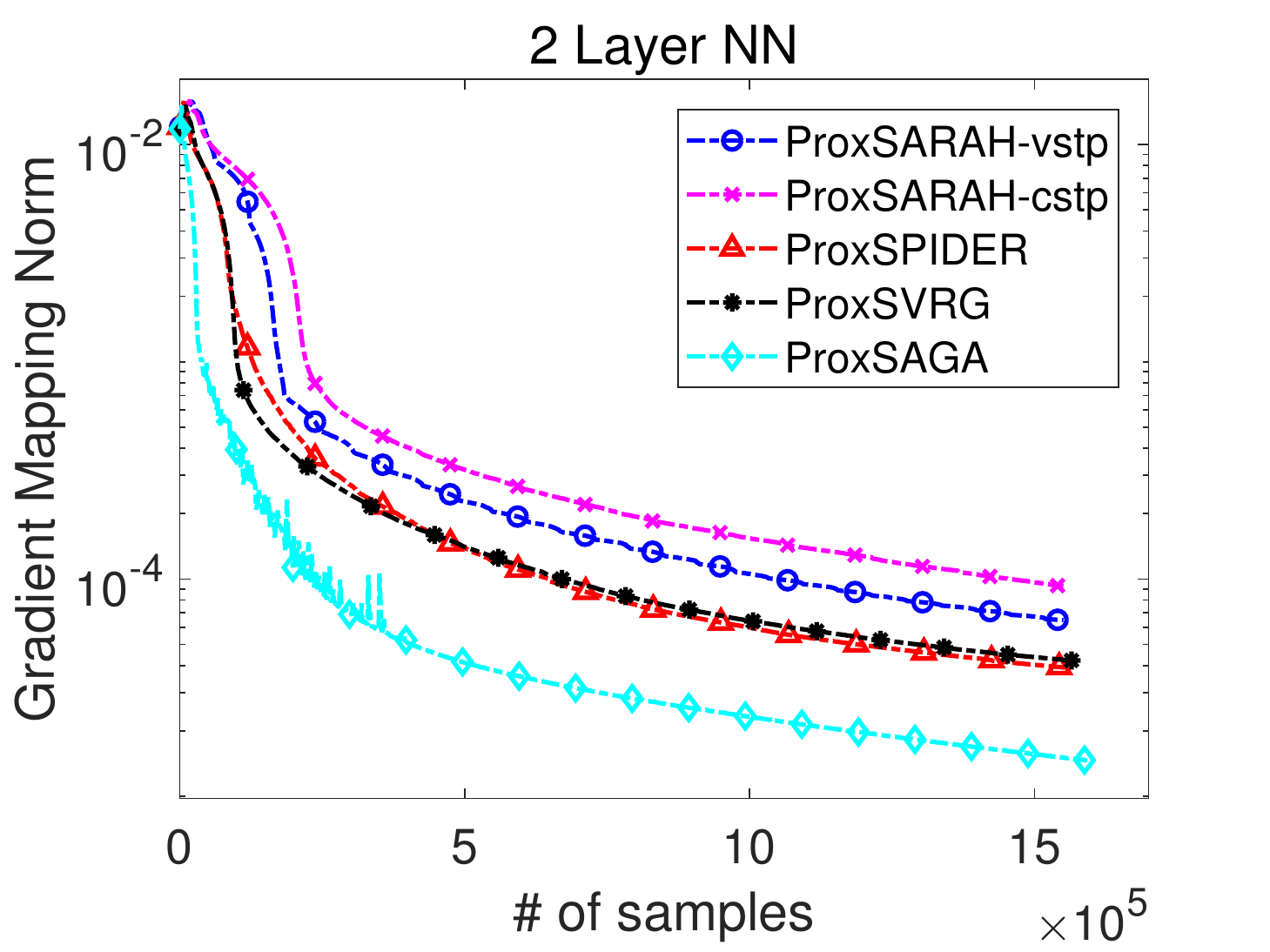} 
	\caption{Experiments on sparse binary classification on \emph{rcv1} dataset, $\beta = 1/20242$.}
	\label{fig:rcv1-binary'}
\end{figure}

\subsection{Sparse portfolio selection problem}
In this section, we present the numerical results for a risk-averse portfolio optimization problems, which is a common benchmark example used in many stochastic composite optimization methds ($e.g.$, \cite{VRSC-PG,ZhangXiao2019C-SAGA,ZhangXiao2019CIVR,TY.Lin}). Consider the scenario where $d$ assets can be invested during the time periods $\{1, ..., N\}.$ We denote the per unit payoff vector at time $i$ by $r_i:=[r_{i,1},...,r_{i,d}]^T$, with $r_{i,j}$ being the per unit payoff of asset $j$ at time period $i$. Let $x\in\R^d$ be the decision variable, where the $j$-th component $x_j$ represents the amount of investment
to asset $j$ throughout the $N$ time period, for $j = 1, . . ., d.$ 

In the portfolio optimization problems, our aim is to maximize the
average payoff over $N$ periods. However, a penalty is imposed on the variance of the payoff across the $N$ periods, which accounts for the risk of the investment. In addition, we also apply an $\ell_1$ penalty to impose sparsity on the portfolio $x$, which gives the following formulation:
$$\maximize_{x\in\R^d}\E\big[\langle r_\xi,x\rangle\big] - \lambda\Var(\langle r_\xi,x\rangle) - \beta\|x\|_1,$$ 
where the random variable $\xi$ follows the uniform distribution over $\{1,...,N\}$.
Using the identity that $\Var(X) = \E[X^2] - \E[X]^2$, the problem can be reformulated as 
\begin{equation}
\label{prob:portfolio}
\minimize_{x\in\R^d}  \E\big[\lambda\cdot\langle r_\xi,x\rangle^2 - \langle r_\xi,x\rangle\big] - \lambda\cdot\E\big[\langle r_\xi,x\rangle\big]^2 + \beta\|x\|_1.
\end{equation} 
If we set 
$f_{1,\xi}(x) : \R^d\mapsto\R^2 = [\langle r_\xi,x\rangle\,\,\langle r_\xi,x\rangle^2]^T$, $f_{2,\xi_2}(y,z) : \R^2\mapsto\R \equiv  -y - \lambda y^2 + \lambda z$, and $\Psi(x) = \beta\|x\|_1,$ then \eqref{prob:portfolio} becomes a special case of \eqref{eqn:multi-level-stochastic} with $m = 2$. 

In the experiments, we compare our method with the C-SAGA algorithm \cite{ZhangXiao2019C-SAGA}, the CIVR and CIVR-adp algorithms \cite{ZhangXiao2019CIVR}, and the VRSC-PG algorithm \cite{VRSC-PG}. We test these algorithms with the real world portfolio datasets from Keneth R. French Data Library\footnote{http://mba.tuck.dartmouth.edu/pages/faculty/ken.french/data\_library.html}. The first dataset is named  \texttt{Industrial}-38, which contains 38 industrial assets payoff over 2552 consecutive time periods; and the second dataset is named  \texttt{Industrial}-49, and contains 49 industrial assets over 24400 consecutive time periods. 
For both datasets, we set $\lambda = 0.2$ and $\beta = 0.01$  in problem \eqref{prob:portfolio}. All the curves are averaged over 20 runs of the methods and are plotted against the number of samples of the component function and gradient evaluations. 

\begin{figure}[t]
	\centering 
	\includegraphics[width=0.45\linewidth]{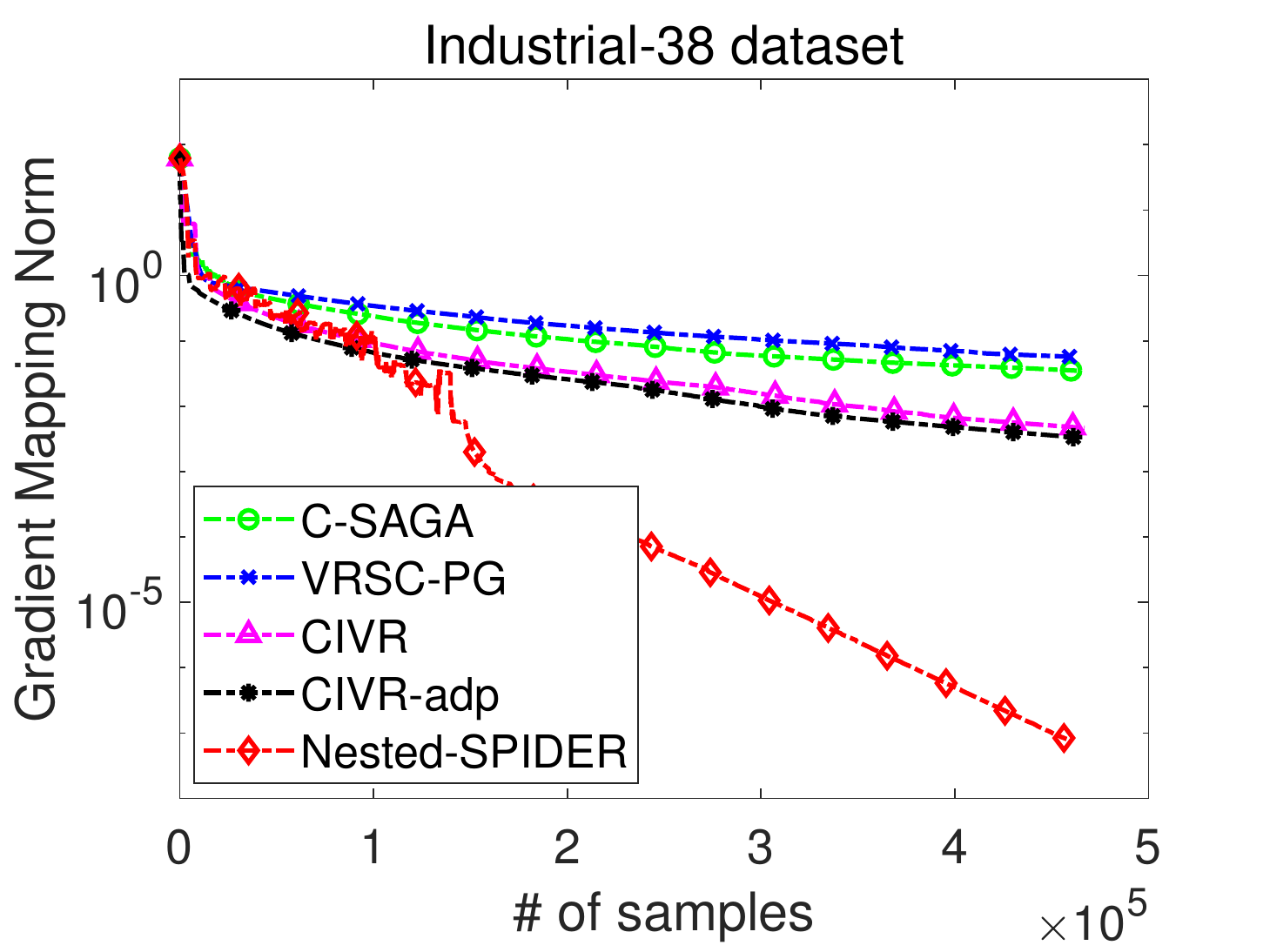}
    \hspace{2ex}
	\includegraphics[width=0.45\linewidth]{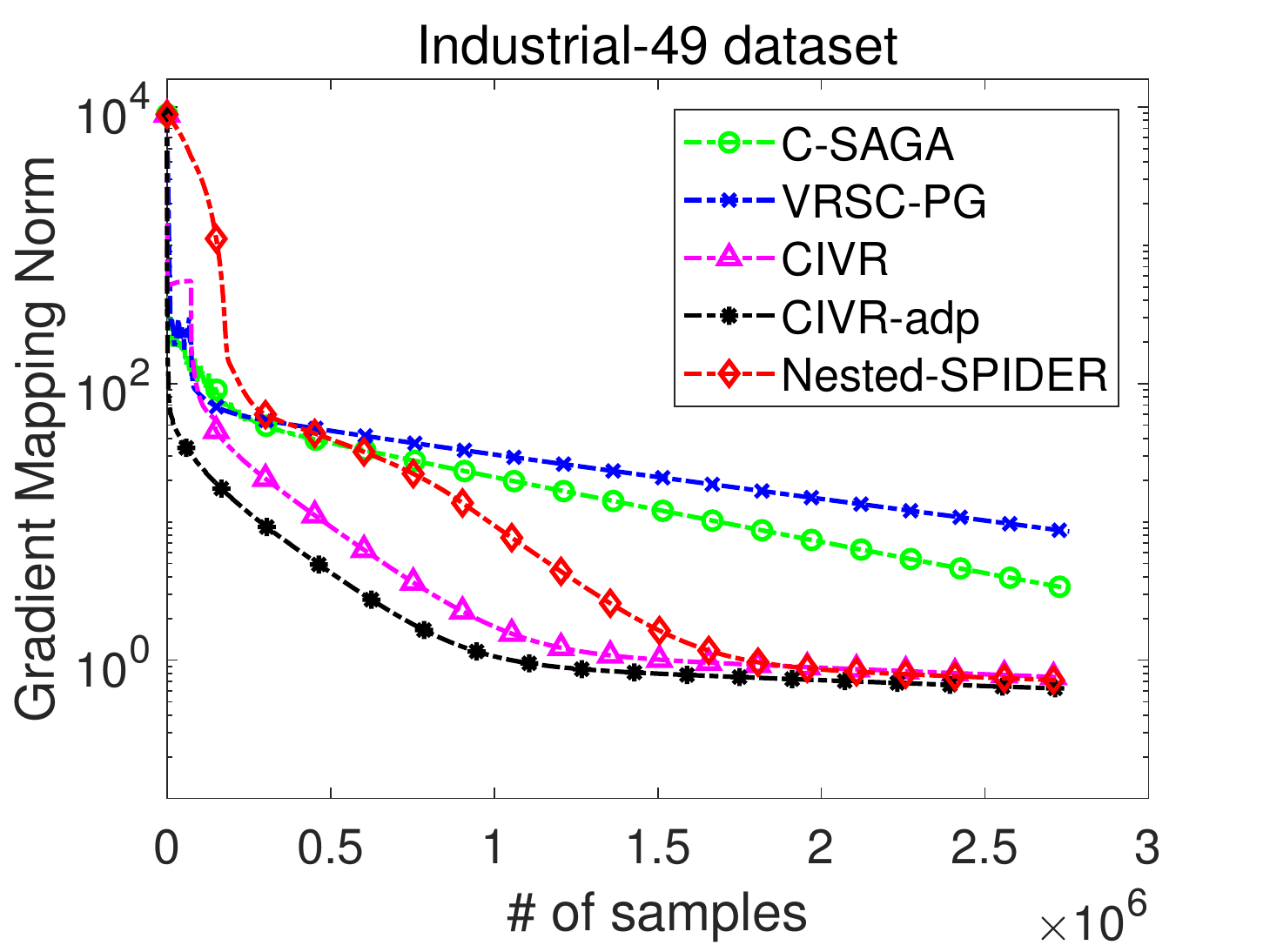}
	\caption{Numerical experiments on the sparse portfolio optimization problem.}
	\label{fig:risk-averse}
\vspace{1ex}
\end{figure}

\begin{figure}[t]
	\centering 
	\includegraphics[width=0.45\linewidth]{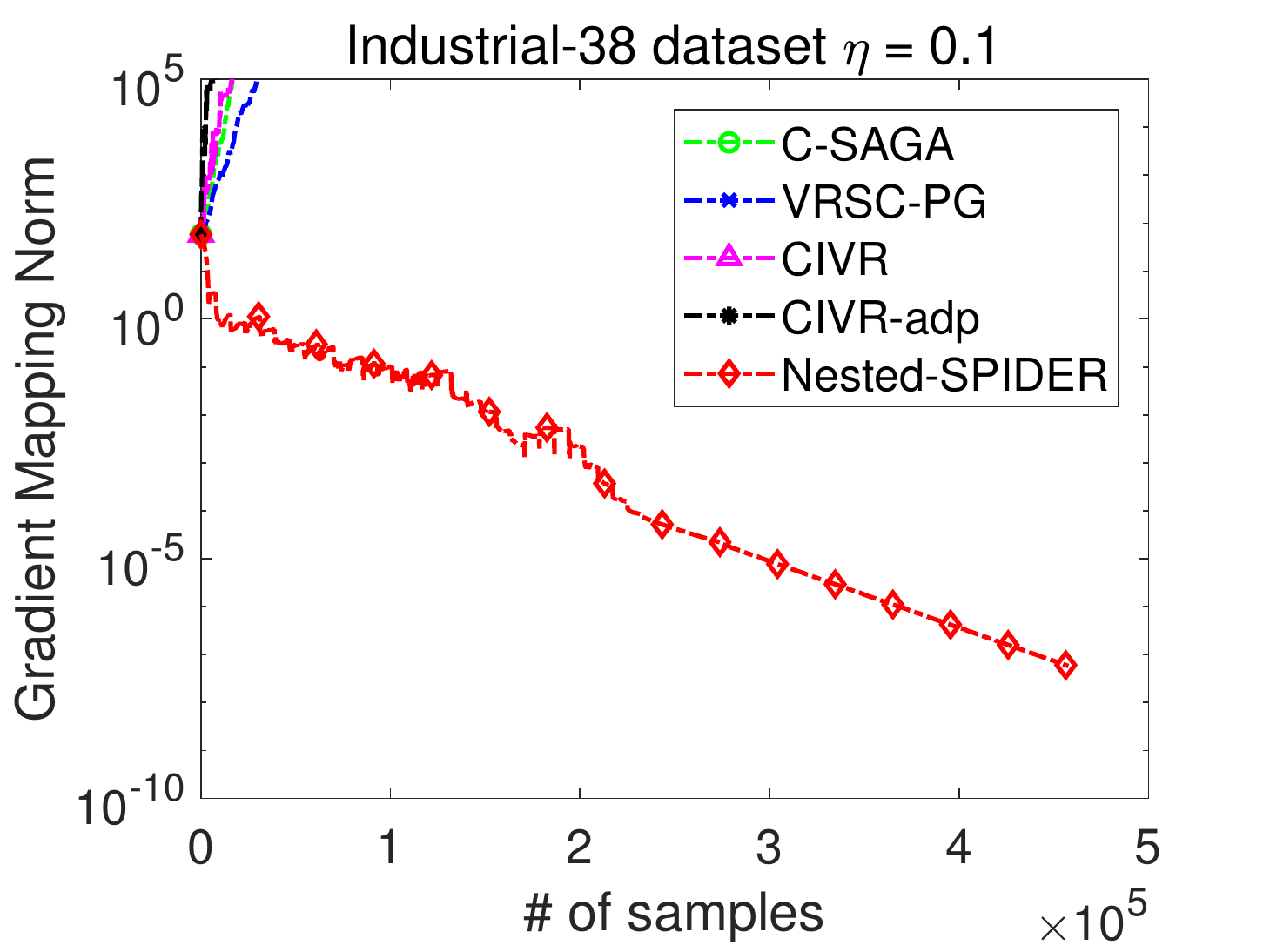}
    \hspace{2ex}
	\includegraphics[width=0.45\linewidth]{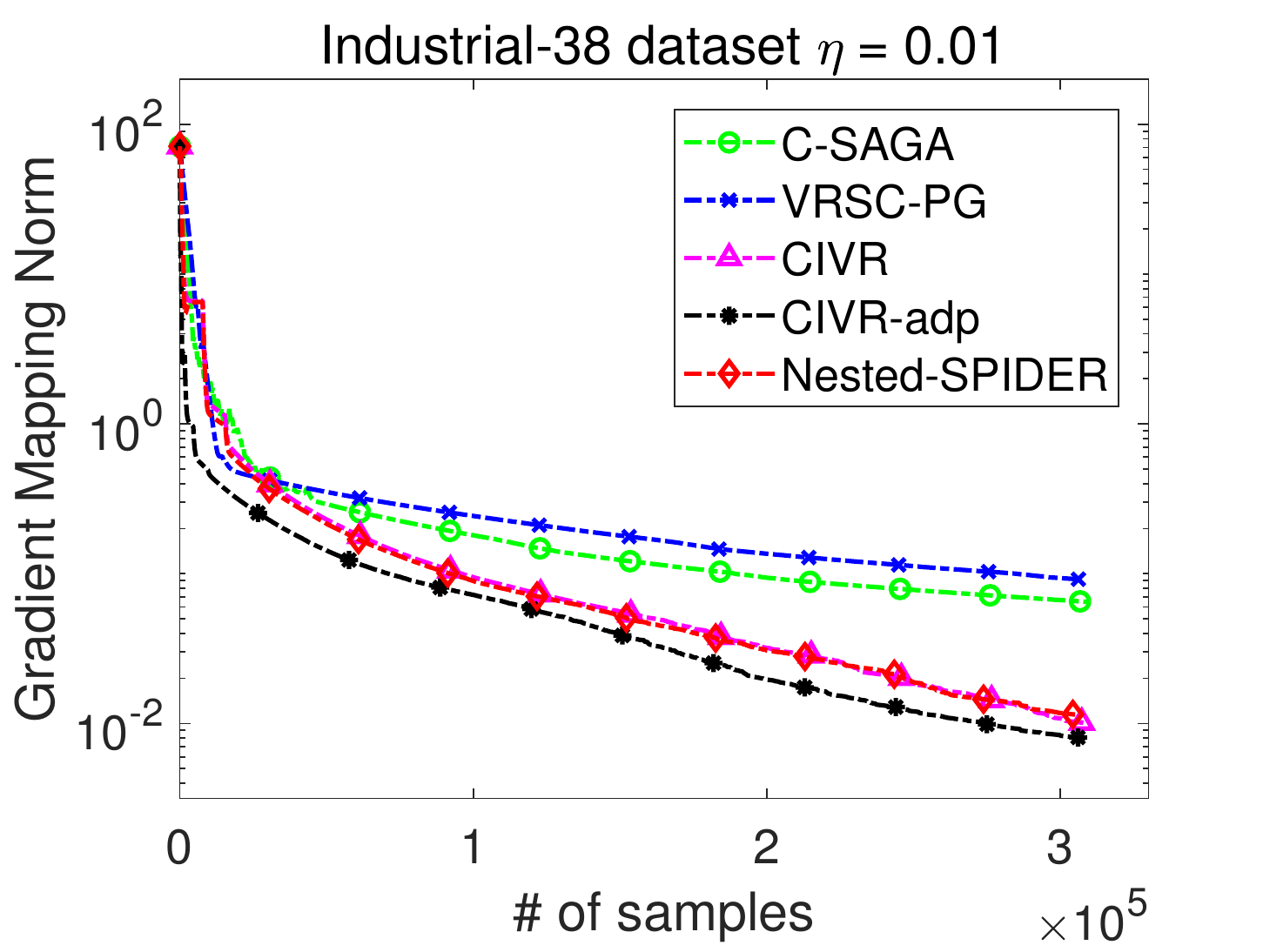}
	\caption{Experiments on \texttt{Industrial}-38 datasets using different step size parameter $\eta$.}
	\label{fig:risk-averse'}
\end{figure}

For these experiments, CIVR and Nested-SPIDER both take the batch size of $\lceil N^{1/2}\rceil$; CIVR-adp takes the adaptive batch size of $S_k = \lceil\min\{10k+1,N^{1/2}\}\rceil$; VRSC-PG and C-SAGA both take the batch size of $\lceil N^{2/3}\rceil$. For all these methods, their stepsizes $\eta$ are chosen as the best from $\{0.0001,0.001,0.01,0.1,1\}$ by experiments. In addition, we set $\epsilon_k = C/\sqrt{k}$ for Nested-SPIDER with $C$ chosen from $\{0.1,1,10,50\}$ by experiments. For the \texttt{Industrial}-38 dataset, $\eta = 0.01$ works best for CIVR, CIVR-adp, C-SAGA and VRSC-PG, and $\eta = 0.1$ and $C = 1$ work best for Nested-SPIDER. For the \texttt{Industrial}-49 dataset, $\eta = 0.0001$ works best for all five tested algorithms and $C = 50$ works best for the Nested-SPIDER algorithm.  

Figure \ref{fig:risk-averse} shows that for the \texttt{Industrial}-49 dataset, the CIVR-adp and CIVR perform the best. However, Nested-SPIDER quickly catches up with them after a slightly slower early stage, which is due to the relatively restrictive stepsizes caused by normalization (in early stages of the algorithm). On the contrary, Nested-SPIDER converges much faster than all other methods in the \texttt{Industrial}-38 dataset because the normalization step enables us to exploit a significantly larger stepsizes (in later stages of the algorithm), under which all the other methods diverge.  In Figure \ref{fig:risk-averse'}, we show the following behavior of the Nested-SPIDER on \texttt{Industrial}-38. On the left, we set $\eta = 0.1$ for all the algorithms and $C=1$ for Nested-SPIDER. All methods except for Nested-SPIDER diverge. On the right, we set $\eta = 0.01$ for all the algorithms and $C=50$ for Nested-SPIDER. In this case, Nested-SPIDER behaves almost identical to CIVR.

\section{Conclusion}
\label{sec:conclusion}

We have proposed a normalized proximal approximate gradient (NPAG) method
for solving multi-level composite stochastic optimization problems.
The approximate gradients at each iteration are obtained via nested
variance reduction using the SARAH/\textsc{Spider} estimator.
In order to find an approximate stationary point where the expected norm of
its gradient mapping is less than $\epsilon$, the total sample complexity of 
our method is $O(\epsilon^{-3})$ in the expectation case, 
and $O(N+\sqrt{N}\epsilon^{-2})$ in the finite-sum case where $N$ is the 
total number of functions across all composition levels.
In addition, the dependence of the sample complexity on the number of 
composition levels is polynomial, rather than exponential as in previous work.
Our results rely on a uniform Lipschitz condition on the composite mappings
and functions, which is stronger than 
the mean-squared Lipschitz condition required for obtaining similar 
complexities for single-level stochastic optimization.

The NPAG method extends the \textsc{Spider}-SFO
method \cite{SPIDER2018NeurIPS} to the proximal setting and for solving
multi-level composite stochastic optimization problems.
In particular, requiring a uniform bound on the MSEs of the approximate 
gradients allows separate analysis of the optimization algorithm and
variance reduction. 
In addition, the uniform bound on the step lengths at each iteration 
is the key to enable our analysis of nested variance reduction.
More flexible rules for choosing the step sizes or step lengths have been 
developed to obtain similar sample complexities for single-level
stochastic or finite-sum optimization problems, especially those used in the
\textsc{Spider}-Boost \cite{SpiderBoost2018} 
and Prox-SARAH \cite{ProxSARAH2019} methods.
It is likely that such step size rules may also be developed for multi-level 
problems, although their complexity analysis may become much more involved.

\appendix
\section{Proof of Lemma~\ref{lem:m=m-Lip-constants}}
\label{sec:Lip-F-m-proof} 
\begin{proof}
Under Assumption~\ref{assumption:m=m}.(a), 
it is straightforward to show that $f_i = \E_{\xi_i}f_{i,\xi_i}$ 
is $\ell_i$-Lipschitz and its gradient $f'_i = \E_{\xi_i}f'_{i,\xi_i}$ 
is $L_i$-Lipschitz. Recall the definitions
$F_i = f_i\circ f_{i-1}\circ\cdots\circ f_1$ for $i=1,\ldots,m$. By the chain rule, we have
\begin{eqnarray*}
	& & \|F'(x) - F'(y)\|\\
	& = & \bigl\|[f_1'(x)]^T\cdots[f_{m-1}'(F_{m-2}(x))]^T f_m'(F_{m-1}(x)) \nonumber\\
    && ~-~ [f_1'(y)]^T\cdots[f_{m-1}'(F_{m-2}(y))]^T f_m'(F_{m-1}(y))\bigr\|\nonumber\\
	& \leq  &\big\|[f_1'(x)]^T\!\!\cdots[f_{m\!-\!1}'(F_{m\!-\!2}(x))]^T \! f_m'(F_{m\!-\!1}(x)) - [f_1'(y)]^T[f_{2}'(F_{1}(x))]^T\!\!\cdots f_m'(F_{m\!-\!1}(x))\big\|\nonumber\\
	&& + \cdots  \\
    && + \big\|[f_1'(y)]^T\!\!\cdots\![f_{m\!-\!1}'(F_{m\!-\!2}(y))]^T \!f_m'(F_{m\!-\!1}(x)) - [f_1'(y)]^T[f_{2}'(F_{1}(y))]^T\!\!\cdots\! f_m'(F_{m\!-\!1}(y))\big\|\nonumber\\
	& \leq & L_1\sprod_{r\neq1}\ell_r\|x-y\| + L_2\sprod_{r\neq 2}\ell_r\|F_1(x)-F_1(y)\| + ... + L_m\sprod_{r\neq m}\ell_r\|F_m(x)-F_m(y)\|.
\end{eqnarray*}

On the other hand, we have for any $x$ and $y$ that  
\begin{eqnarray*}
	\|F_i(x)-F_i(y)\| &	= &\|f_i(F_{i-1}(x))-f_i(F_{i-1}(y))\| \\
	& \leq & \ell_i\|F_{i-1}(x)-F_{i-1}(y)\| \leq \sprod_{r=1}^i\ell_r\|x-y\|.
\end{eqnarray*}
Therefore $F_i$ 
is $\ell_F$-Lipschitz with $\ell_F=\sprod_{i=1}^m\ell_i$. Substituting these Lipschitz constants into the bound of  $\|F'(x) - F'(y)\|$ yields
\[
\|F'(x) - F'(y)\|\leq \left(\sum_{i=1}^mL_i\biggl(\sprod_{r=1}^{i-1}\ell_r\biggr)^2\cdot\biggl(\sprod_{r=i+1}^{m}\ell_r\biggr)\right)\|x-y\|, 
\]
which gives the expression of $L_F$ in~\eqref{eqn:Lip-F-m=m}.
\end{proof}

\bibliographystyle{plain}
\bibliography{NestedSpider}

\begin{thebibliography}{10}

\bibitem{Natasha2017ICML}
Zeyuan Allen-Zhu.
\newblock Natasha: Faster non-convex stochastic optimization via strongly
  non-convex parameter.
\newblock In {\em Proceedings of the 34th International Conference on Machine
  Learning (ICML)}, volume~70 of {\em Proceedings of Machine Learning
  Research}, pages 89--97, Sydney, Australia, 2017.

\bibitem{Natasha2NIPS2018}
Zeyuan Allen-Zhu.
\newblock Natasha 2: Faster non-convex optimization than {SGD}.
\newblock In {\em Advances in Neural Information Processing Systems 31}, pages
  2675--2686. Curran Associates, Inc., 2018.

\bibitem{Allen-ZhuHazan2016}
Zeyuan Allen-Zhu and Elad Hazan.
\newblock Variance reduction for faster non-convex optimization.
\newblock In {\em Proceedings of the 33rd International Conference on Machine
  Learning}, pages 699--707, 2016.

\bibitem{Allen-ZhuYuan2016}
Zeyuan Allen-Zhu and Yang Yuan.
\newblock Improved {SVRG} for non-strongly-convex or sum-of-non-convex
  objectives.
\newblock In {\em Proceedings of the 33rd International Conference on
  International Conference on Machine Learning (ICML)}, pages 1080--1089, 2016.

\bibitem{Beck2017book}
Amir Beck.
\newblock {\em First-Order Methods in Optimization}.
\newblock MOS-SIAM Series on Optimization. SIAM, 2017.

\bibitem{BlanchetGoldfarb2017}
Jose Blanchet, Donald Goldfarb, Garud Iyengar, Fengpei Li, and Chaoxu Zhou.
\newblock Unbiased simulation for optimizing stochastic function compositions.
\newblock Preprint, arXiv:1711.07564, 2017.

\bibitem{dann2014policy}
Christoph Dann, Gerhard Neumann, and Jan Peters.
\newblock Policy evaluation with temporal differences: a survey and comparison.
\newblock {\em Journal of Machine Learning Research}, 15(1):809--883, 2014.

\bibitem{defazio2014saga}
Aaron Defazio, Francis Bach, and Simon Lacoste-Julien.
\newblock {SAGA}: A fast incremental gradient method with support for
  non-strongly convex composite objectives.
\newblock In {\em Advances in Neural Information Processing Systems 27}, pages
  1646--1654, 2014.

\bibitem{DekelGSX2012}
Ofer Dekel, Ran Gilad-Bachrach, Ohad Shamir, and Lin Xiao.
\newblock Optimal distributed online prediction using mini-batches.
\newblock {\em Jorunal of Machine Learning Research}, 13:165--202, 2012.

\bibitem{dentcheva2017statistical}
Darinka Dentcheva, Spiridon Penev, and Andrzej Ruszczy{\'n}ski.
\newblock Statistical estimation of composite risk functionals and risk
  optimization problems.
\newblock {\em Annals of the Institute of Statistical Mathematics},
  69(4):737--760, 2017.

\bibitem{DrusvyatskiyLewis2018MOR}
Dmitriy Drusvyatskiy and Adrian~S. Lewis.
\newblock Error bounds, quadratic growth, and linear convergence of proximal
  methods.
\newblock {\em Mathematics of Operations Research}, 43(3):919--948, 2018.

\bibitem{Ermoliev1976}
Y.~Ermoliev.
\newblock {\em Methods of Stochastic Programming}.
\newblock Monographs in Optimization and OR. Nauka, Moscow, 1976.

\bibitem{SPIDER2018NeurIPS}
Cong Fang, Chris~Junchi Li, Zhouchen Lin, and Tong Zhang.
\newblock Spider: Near-optimal non-convex optimization via stochastic
  path-integrated differential estimator.
\newblock In {\em Advances in Neural Information Processing Systems 31}, pages
  689--699. Curran Associates, Inc., 2018.

\bibitem{GhadimiLan2013}
Saeed Ghadimi and Guanghui Lan.
\newblock Stochastic first- and zeroth-order methods for nonconvex stochastic
  programming.
\newblock {\em SIAM Journal on Optimization}, 23(4):2341--2368, 2013.

\bibitem{GhadimiRuszWang2018}
Saeed Ghadimi, Andrzej Ruszczy\'nski, and Mengdi Wang.
\newblock A single time-scale stochastic approximation method for nested
  stochastic optimization.
\newblock Preprint, arXiv:1812.01094, 2018.

\bibitem{VRSC-PG}
Zhouyuan Huo, Bin Gu, Ji~Jiu, and Heng Huang.
\newblock Accelerated method for stochastic composition optimization with
  nonsmooth regularization.
\newblock In {\em Proceedings of the 32nd AAAI Conference on Artificial
  Intelligence}, pages 3287--3294, 2018.

\bibitem{IusemJofre2017}
A.~N. Iusem, A.~Jofr\'e, R.~I. Oliveira, and P.~Phompson.
\newblock Extragradient method with variance reduction for stochastic
  variational inequalities.
\newblock {\em SIAM Journal on Optimization}, 27(2):686--724, 2017.

\bibitem{johnson2013accelerating}
Rie Johnson and Tong Zhang.
\newblock Accelerating stochastic gradient descent using predictive variance
  reduction.
\newblock In {\em Advances in Neural Information Processing Systems 26}, pages
  315--323, 2013.

\bibitem{KoshalNedicShanbhag2013}
J.~Koshal, A.~Nedi\'c, and U.~B. Shanbhag.
\newblock Regularized iterative stochastic approximation methods for stochastic
  variational inequality problems.
\newblock {\em IEEE Transactions on Automatic Control}, 58(3):594--609, 2013.

\bibitem{SVR-SCGD}
Xiangru Lian, Mengdi Wang, and Ji~Liu.
\newblock Finite-sum composition optimization via variance reduced gradient
  descent.
\newblock In {\em Proceedings of the 20th International Conference on
  Artificial Intelligence and Statistics (AISTATS)}, pages 1159--1167, 2017.

\bibitem{TY.Lin}
Tianyi Lin, Chenyou Fan, Mengdi Wang, and Michael~I Jordan.
\newblock Improved oracle complexity for stochastic compositional variance
  reduced gradient.
\newblock {\em Preprint, arXiv:1806.00458}, 2018.

\bibitem{nesterov2013composite}
Yurii Nesterov.
\newblock Gradient methods for minimizing composite functions.
\newblock {\em Mathematical Programming}, 140(1):125--161, 2013.

\bibitem{Nesterov2018book}
Yurii Nesterov.
\newblock {\em Lectures on Convex Optimization}.
\newblock Springer, 2nd edition, 2018.

\bibitem{SARAH2017ICML}
Lam~M. Nguyen, Jie Liu, Katya Scheinberg, and Martin Tak{\'a}{\v{c}}.
\newblock {SARAH}: A novel method for machine learning problems using
  stochastic recursive gradient.
\newblock In {\em Proceedings of the 34th International Conference on Machine
  Learning (ICML)}, volume~70 of {\em PMLR}, pages 2613--2621, 2017.

\bibitem{SmoothSARAH2019}
Lam~M. Nguyen, Marten van Dijk, Dzung~T. Phan, Phuong~Ha Nguyen, Tsui-Wei Weng,
  and Jayant~R. Kalagnanam.
\newblock Finite-sum smooth optimization with {SARAH}.
\newblock arXiv:1901.07648, 2019.

\bibitem{ProxSARAH2019}
Nhan~H. Pham, Lam~M. Nguyen, Dzung~T. Phan, and Quoc Tran-Dinh.
\newblock {ProxSARAH}: An efficient algorithmic framework for stochastic
  composite nonconvex optimization.
\newblock Preprint, arXiv:1902.05679, 2019.

\bibitem{Reddi2016SVRGnonconvex}
Sashank~J. Reddi, Ahmed Hefny, Suvrit Sra, Barnabas Poczos, and Alex Smola.
\newblock Stochastic variance reduction for nonconvex optimization.
\newblock In {\em Proceedings of The 33rd International Conference on Machine
  Learning}, volume~48 of {\em Proceedings of Machine Learning Research}, pages
  314--323, New York, New York, USA, 2016.

\bibitem{NCVX-SAGA-Smooth}
Sashank~J Reddi, Suvrit Sra, Barnab{\'a}s P{\'o}czos, and Alex Smola.
\newblock Fast incremental method for smooth nonconvex optimization.
\newblock In {\em 2016 IEEE 55th Conference on Decision and Control (CDC)},
  pages 1971--1977. IEEE, 2016.

\bibitem{NCVX-SAGA-Nonsmooth}
Sashank~J Reddi, Suvrit Sra, Barnab{\'a}s P{\'o}czos, and Alexander~J Smola.
\newblock Proximal stochastic methods for nonsmooth nonconvex finite-sum
  optimization.
\newblock In {\em Advances in Neural Information Processing Systems 29}, pages
  1145--1153, 2016.

\bibitem{Rockafellar70book}
R.~Tyrrell Rockafellar.
\newblock {\em Convex Analysis}.
\newblock Princeton University Press, 1970.

\bibitem{Rockafellar2007CoherentRisk}
R.~Tyrrell Rockafellar.
\newblock Coherent approaches to risk in optimization under uncertainty.
\newblock {\em INFORMS TutORials in Operations Research}, 2007.

\bibitem{NIPS2012SAG}
Nicolas~L. Roux, Mark Schmidt, and Francis~R. Bach.
\newblock A stochastic gradient method with an exponential convergence \_rate
  for finite training sets.
\newblock In {\em Advances in Neural Information Processing Systems 25}, pages
  2663--2671. Curran Associates, Inc., 2012.

\bibitem{Ruszczynski2013risk-averse}
Andrzej Ruszczy\'nski.
\newblock Advances in risk-averse optimization.
\newblock {\em INFORMS TutORials in Operation Research}, 2013.

\bibitem{sutton1998reinforcement}
Richard~S Sutton and Andrew~G Barto.
\newblock {\em Reinforcement Learning: An Introduction}.
\newblock MIT Press, Cambridge, MA, 1998.

\bibitem{SCGD-M.Wang}
Mengdi Wang, Ethan~X Fang, and Han Liu.
\newblock Stochastic compositional gradient descent: algorithms for minimizing
  compositions of expected-value functions.
\newblock {\em Mathematical Programming}, 161(1-2):419--449, 2017.

\bibitem{ASC-PG-M.Wang}
Mengdi Wang, Ji~Liu, and Ethan Fang.
\newblock Accelerating stochastic composition optimization.
\newblock {\em Journal of Machine Learning Research}, 18(105):1--23, 2017.

\bibitem{SpiderBoost2018}
Zhe Wang, Kaiyi Ji, Yi~Zhou, Yingbin Liang, and Vahid Tarokh.
\newblock {SpiderBoost}: A class of faster variance-reduced algorithms for
  nonconvex optimization.
\newblock Preprint, arXiv:1810.10690, 2018.

\bibitem{xiaozhang2014proxsvrg}
Lin Xiao and Tong Zhang.
\newblock A proximal stochastic gradient method with progressive variance
  reduction.
\newblock {\em SIAM Journal on Optimization}, 24(4):2057--2075, 2014.

\bibitem{YangWangFang2019}
Shuoguang Yang, Mengdi Wang, and Ethan~X. Fang.
\newblock Multilevel stochastic gradient methods for nested composition
  optimization.
\newblock {\em SIAM Journal on Optimization}, 29(1):616--659, 2019.

\bibitem{YuanHeZhuLi2019}
Xiaoyong Yuan, Pan He, Qile Zhu, and Xiaolin Li.
\newblock Adversarial examples: Attacks and defenses for deep learning.
\newblock {\em IEEE Transactions on Neural Networks and Learning Systems},
  30(9):2805--2824, 2019.

\bibitem{ZhangXiao2019C-SAGA}
Junyu Zhang and Lin Xiao.
\newblock A composite randomized incremental gradient method.
\newblock In {\em Proceedings of the 36th International Conference on Machine
  Learning (ICML)}, number~97 in Proceedings of Machine Learning Research
  (PMLR), Long Beach, California, 2019.

\bibitem{ZhangXiao2019CIVR}
Junyu Zhang and Lin Xiao.
\newblock A stochastic composite gradient method with incremental variance
  reduction.
\newblock In {\em Advances in Neural Information Processing Systems 32}, pages
  9078--9088. Curran Associates, Inc., 2019.

\bibitem{NestedSVRG2018NeurIPS}
Dongruo Zhou, Pan Xu, and Quanquan Gu.
\newblock Stochastic nested variance reduced gradient descent for nonconvex
  optimization.
\newblock In {\em Advances in Neural Information Processing Systems 31}, pages
  3921--3932. Curran Associates, Inc., 2018.

\end{thebibliography}

\end{document}